\theoremstyle{theorem}
\newtheorem{theorem}{Theorem}[section]
\newtheorem{proposition}[theorem]{Proposition}
\newtheorem{lemma}[theorem]{Lemma}
\newtheorem{question}[theorem]{Question}
\newtheorem{thmx}{Theorem}
\newtheorem{corox}[thmx]{Corollary}
\newtheorem*{rep@theorem}{\rep@title}
\newcommand{\newreptheorem}[2]{%
\newenvironment{rep#1}[1]{%
 \def\rep@title{#2 \ref{##1}}%
 \begin{rep@theorem}}%
 {\end{rep@theorem}}}
\theoremstyle{definition}
\newtheorem{example}[theorem]{Example}
\newtheorem{definition}[theorem]{Definition}
\newtheorem{remark}[theorem]{Remark}
\newcommand{\Z}{\mathbb{Z}}
\newcommand{\RP}{\mathbb{RP}}
\newcommand{\CP}{\mathbb{CP}}
\newcommand{\id}{\text{id}}
\newcommand{\DD}{\mathfrak D}
\newcommand{\Cc}{\mathcal C}
\newcommand{\Dd}{\mathcal D}
\newcommand{\Ff}{\mathcal F}
\newcommand{\Jj}{\mathcal J}
\newcommand{\Kk}{\mathcal K}
\newcommand{\Ll}{\mathcal L}
\newcommand{\Mm}{\mathcal M}
\newcommand{\Ss}{\mathcal S}
\newcommand{\Tt}{\mathcal T}
\newcommand{\Uu}{\mathcal U}
\newcommand*\wt[1]{\mathpalette\wthelper{#1}}
\newcommand*\wthelper[2]{%
        \hbox{\dimen@\accentfontxheight#1%
                \accentfontxheight#11.3\dimen@
                $\m@th#1\widetilde{#2}$%
                \accentfontxheight#1\dimen@
        }%
}
\newcommand*\accentfontxheight[1]{%
        \fontdimen5\ifx#1\displaystyle
                \textfont
        \else\ifx#1\textstyle
                \textfont
        \else\ifx#1\scriptstyle
                \scriptfont
        \else
                \scriptscriptfont
        \fi\fi\fi3
}
\begin{document}

\rhead{\thepage}
\lhead{\author}
\thispagestyle{empty}


\raggedbottom
\pagenumbering{arabic}
\setcounter{section}{0}


\title{Doubly pointed trisection diagrams and surgery on 2--knots}
\date{\today}

\author{David Gay}
\address{Euclid Lab\\ 160 Milledge Terrace\\ Athens, GA 30606
\newline
\indent Department of Mathematics\\ University
  of Georgia\\ Athens, GA 30605}
\email{d.gay@euclidlab.org}

\author{Jeffrey Meier}
\address{Department of Mathematics, University of Georgia, 
Athens, GA 30605}
\email{jeffrey.meier@uga.edu}
\urladdr{jeffreymeier.org} 
 
\begin{abstract}
	We study embedded spheres in 4--manifolds (2--knots) via doubly pointed trisection diagrams, showing that such descriptions are unique up to stabilization and handleslides, and we describe how to obtain trisection diagrams for certain cut-and-paste operations along 2--knots directly from doubly pointed trisection diagrams. The operations described are classical surgery, Gluck surgery, blowdown, and $(\pm4)$--rational blowdown, and we illustrate our techniques and results with many examples.
\end{abstract}

\maketitle

\section{Introduction}\label{sec:intro}

A \emph{$2$--knot} is a pair $(X,\Kk)$, where $X$ is a smooth, connected, compact, orientable 4--manifold, and $\Kk\subset X$ is a smoothly embedded $2$--sphere.  We sometimes refer to $\Kk$ as a $2$--knot, when the ambient manifold $X$ is understood from context. In this paper we use doubly pointed trisection diagrams (trisection diagrams augmented with pairs of points) to describe $2$--knots and to produce trisection diagrams for the $4$--manifolds resulting from various cut-and-paste operations on $2$--knots. The precise definitions needed to understand the main theorems will be given as the background material is developed in later sections.

In~\cite{MeiZup_Bridge-trisections_17} and~\cite{MeiZup_Bridge-trisections_}, the second author and Zupan showed that trisections of $4$--manifolds are the right setting in which to generalize bridge splittings of classical knots in dimension $3$, introducing the notion of a bridge trisection of an embedded surface in a $4$--manifold. Classical bridge splittings lead to multipointed Heegaard diagrams for knots, with $1$--bridge splittings giving doubly pointed diagrams, and this generalizes to dimension $4$. The following is a restatement of Theorem~1.2 and Corollary~1.3 of~\cite{MeiZup_Bridge-trisections_} as applied to $2$--knots.

\begin{theorem}[\cite{MeiZup_Bridge-trisections_}]
	Every 2--knot $(X,\Kk)$ admits a $1$--bridge trisection and, thus, can be described by a doubly pointed trisection diagram.
\end{theorem}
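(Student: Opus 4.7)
The plan is to invoke the general existence result of Meier--Zupan (Theorem 1.2 of \cite{MeiZup_Bridge-trisections_}) to first produce some bridge trisection of the pair $(X,\Kk)$, and then to reduce the resulting invariants to the $1$--bridge case by destabilization. Concretely, starting from any trisection $\Tt$ of $X$ and any embedding of $\Kk$, a transversality argument followed by a sequence of isotopies places $\Kk$ into bridge position: $\Kk\cap X_i$ is a collection of $c_i$ trivially embedded disks for each sector $X_i$, $\Kk$ meets each $3$--dimensional piece $X_i\cap X_j$ in a collection of boundary-parallel arcs, and $\Kk$ meets the central surface $\Sigma$ transversely in $2b$ bridge points, for some triple $(b;c_1,c_2,c_3)$.

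The second step specializes to the sphere. An Euler characteristic count using the cell decomposition of $\Kk$ induced by the trisection (with $2b$ vertices, $3b$ edges, since each bridge point is an endpoint of exactly one arc in each of the three handlebodies $X_i\cap X_j$, and $c_1+c_2+c_3$ faces) yields $c_1+c_2+c_3 = b+2$. In particular $(1;1,1,1)$ is the numerically minimal bridge trisection of a $2$--sphere, and the task becomes reducing any given bridge trisection to this minimum. I would do this by iteratively locating canceling disk pairs --- a trivial disk of $\Kk\cap X_i$ together with a trivial disk of $\Kk\cap X_j$ that cobound a canceling $3$--ball neighborhood in $X_i\cup X_j$ --- and performing the corresponding destabilization, which preserves the smooth isotopy class of $\Kk$ in $X$ while decreasing the complexity. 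For genus zero surfaces, the existence of such a canceling pair whenever some $c_i>1$ should follow from the fact that trivial disk systems in the $4$--dimensional $1$--handlebodies $X_i$ are unique up to isotopy (a light-bulb--type argument), together with the simple connectivity of the sphere, which prevents the disks from being knotted around each other.

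The main obstacle, and the step specific to this setting, is precisely the termination of the reduction at $(1;1,1,1)$; for higher-genus surfaces genuine obstructions to destabilization exist, so the argument must use simple connectedness of $S^2$ in an essential way. Once the $1$--bridge position is obtained, the corollary is immediate: the central surface $\Sigma$ with its three cut systems forms a trisection diagram $(\Sigma;\bfa,\beta,\gamma)$ for $X$, and marking the two points $\Kk\cap\Sigma\subset\Sigma$ gives a doubly pointed trisection diagram. The $2$--knot $\Kk$ is recovered uniquely from this data by connecting the two marked points via the unique (up to isotopy rel endpoints) trivial arcs in the three handlebodies, and capping off the resulting unknots in each $\partial X_i$ with the unique trivial disks in each $4$--dimensional sector.
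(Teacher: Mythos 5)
The paper does not prove this statement; it restates Theorem~1.2 and Corollary~1.3 of the cited Meier--Zupan paper, specialized to $2$--spheres, and offers no argument of its own. So the comparison is with the cited proof, not anything in this paper.

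Your Euler-characteristic count $\chi(\Kk)=c_1+c_2+c_3-b$ is correct, and it does force $b=1$ once all $c_i=1$; the final paragraph passing from $1$--bridge position to a doubly pointed diagram is also fine. The gap is the reduction step. You posit a ``canceling pair'' consisting of a trivial disk in $X_i$ and one in $X_j$ that cobound a $3$--ball in $X_i\cup X_j$, but as stated this does not parse: two patches in adjacent sectors share at most a single bridge arc, so they do not form a $2$--sphere and cannot cobound a $3$--ball. Even granting a corrected formulation, the existence of such a pair whenever some $c_i>1$ is asserted only as ``should follow'' from light-bulb uniqueness, and that assertion is precisely the content one would need to prove. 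The actual Meier--Zupan argument (their Corollary~1.3) reduces one sector at a time by merging two patches of $\Kk\cap X_i$, lowering $c_i$ and $b$ each by one, and this is shown to be available for \emph{every} connected closed surface --- which also shows that your framing (``for higher-genus surfaces genuine obstructions to destabilization exist, so the argument must use simple connectedness of $S^2$ in an essential way'') is mistaken: the reduction to $c_i\equiv1$ never needs $\pi_1(\Kk)=1$, and the only special role of the sphere here is $\chi(S^2)=2$.
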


In this paper, we extend this existence statement to a uniqueness statement.

\begin{thmx}\label{thm:uniqueness}
	Any two $1$--bridge trisections of a given $2$--knot have a common stabilization.
\end{thmx}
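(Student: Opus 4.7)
The plan is to adapt the Gay--Kirby uniqueness theorem for trisections so that it keeps track of the embedded $2$--sphere throughout the Cerf-theoretic argument, and then reduce to a uniqueness statement for $1$--bridge positions of $\Kk$ with respect to a fixed underlying trisection. For the first step, I would apply the existence of common stabilizations to the two underlying trisections $\Tt_0, \Tt_1$ of $X$, producing a generic $1$--parameter family of maps $f_t \colon X \to \R^2$ interpolating between them. By general position, $\Kk$ can be made transverse to the fibers of $f_t$ for generic $t$, so that it meets the trisection surface in a finite set of points and each handlebody in a collection of arcs; at the finitely many times when a tangency between $\Kk$ and the trisection surface arises, I would preemptively perturb the $1$--bridge trisection (introducing a pair of cancelling intersection points on the surface) to absorb the tangency. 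The upshot is that both $1$--bridge trisections acquire stabilizations realized over the same stabilized trisection $\Tt$ of $X$.

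With $\Tt$ fixed, it remains to compare two $1$--bridge positions $B_0, B_1$ of $\Kk$ with respect to $\Tt$. Since $\Kk$ is a single fixed submanifold, the two positions are related by an ambient isotopy $\phi_t$ of $X$, and $\phi_t(\Kk)$ is in some $b_t$--bridge position with respect to $\Tt$ at generic times, with $b_t$ constant except at isolated singular times. A codimension-$1$ analysis, in the spirit of the Meier--Zupan treatment of bridge trisections in $S^4$, should show that the only events in the family are tangencies of $\phi_t(\Kk)$ with the trisection surface and arc slides of a bridge arc within a handlebody, both of which are undone by perturbations of the bridge trisection. Introducing enough perturbations of $B_0$ and $B_1$ in advance then yields a common stabilization.

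The main obstacle is the codimension-$1$ classification when a bridge arc temporarily becomes non-trivial inside a handlebody during the family $\phi_t$, a phenomenon analogous to the stabilization required in Waldhausen's theorem on Heegaard splittings. Handling it requires adding cancelling perturbations in advance so that a local slide can untangle the offending arc without altering the global $1$--bridge structure, and then verifying that the resulting modification of the doubly pointed trisection diagram is a legitimate bridge stabilization move. Once this step is in place, the two preceding reductions combine to produce the required common stabilization.
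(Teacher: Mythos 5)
Your proposal takes a genuinely different route from the paper, and as written it has a real gap. The paper's proof is much shorter: it first uses an ambient isotopy to arrange that the two $1$--bridge trisections agree on $\Kk$ itself (any $1$--bridge decomposition of a $2$--sphere is just a standard decomposition into three bigons meeting at two points), then further isotopes so that they agree on a whole neighborhood $\nu(\Kk)$. At that point the two $1$--bridge trisections restrict to two relative trisections of the exterior $X \setminus \nu(\Kk)$ inducing the \emph{same} annular open book on the boundary, and Theorem~\ref{thm:relexistunique} (relative uniqueness) says those become isotopic after interior stabilizations, which by construction take place away from $\Kk$. This is precisely the notion of stabilization for $1$--bridge trisections, so the theorem follows immediately. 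The work has been outsourced to the relative uniqueness theorem, and the key maneuver is to fix the sphere and vary the trisection rather than fix the trisection and vary the sphere.

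Your approach fixes the underlying trisection first and then compares bridge positions of $\Kk$, and here is where the gap lies. When you take a generic $1$--parameter family $f_t$ connecting the two trisections and absorb tangencies by ``introducing a pair of cancelling intersection points on the surface,'' you are performing a \emph{perturbation} of the bridge trisection in the Meier--Zupan sense, which increases the bridge number above~$1$. But the stabilization move allowed in Theorem~\ref{thm:uniqueness} is, by Definition~\ref{def:stab} and the discussion preceding the theorem, a stabilization of the underlying $4$--manifold trisection performed \emph{away from} $\Kk$; it does not change the $1$--bridge structure. Nothing in your outline brings the bridge number back down to~$1$, nor does it show that the perturbations you need can be realized as away-from-$\Kk$ stabilizations. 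Consequently the family $f_t$ gives, at best, a common stabilization of two \emph{generalized} bridge trisections up to perturbation and stabilization (a different, Meier--Zupan--style statement), not the stated result. Your ``main obstacle'' paragraph identifies the right difficulty, but the codimension-one analysis would have to prove something stronger than what you describe: that every perturbation introduced along the way can eventually be cancelled, or is slide-equivalent to a stabilization disjoint from $\Kk$. This is exactly the kind of delicate Waldhausen-type control that the paper avoids by passing to the exterior and letting the annular open book and the relative uniqueness theorem do the work.
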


This theorem has an immediate diagrammatic corollary.

\begin{corox}\label{coro:diag_uniqueness}
	Any two doubly pointed trisection diagrams for a given $2$--knot become slide-diffeo{\hskip0pt}omorphic after stabilization.
\end{corox}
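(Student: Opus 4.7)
The plan is to deduce this corollary directly from Theorem~\ref{thm:uniqueness} by translating the topological uniqueness statement into purely diagrammatic language. The key observation is that a doubly pointed trisection diagram is nothing more than a combinatorial encoding of a 1-bridge trisection: one records cut systems of compressing curves for the three handlebodies on the central surface, while the two marked points are the bridge points where $\Kk$ meets the surface, and hence are intrinsic data of the trisection rather than extra choices.

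Given two doubly pointed trisection diagrams $\Dd_1,\Dd_2$ for the 2--knot $(X,\Kk)$, the argument proceeds in three steps. First, I would apply Theorem~\ref{thm:uniqueness} to their underlying 1--bridge trisections $\Tt_1,\Tt_2$ to obtain a common stabilization $\Tt$. Second, I would check that a single topological stabilization of a 1--bridge trisection corresponds to a single diagrammatic stabilization of any associated doubly pointed diagram; this requires only tracking how cut systems and bridge points behave under the move. Third, I would verify that any two doubly pointed trisection diagrams arising from the same 1--bridge trisection are already slide-diffeomorphic: this is the standard statement that cut systems for a handlebody are unique up to handleslides and ambient diffeomorphism, applied simultaneously in the three handlebodies, with the two marked points carried along freely by the ambient diffeomorphism since they depend only on the trisection and not on the cut-system choices.

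Putting the three steps together, appropriate stabilizations of $\Dd_1$ and $\Dd_2$ are each slide-diffeomorphic to a diagram for the common stabilization $\Tt$, and hence to each other.

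The main obstacle I anticipate is the bookkeeping around the correspondence between stabilizations: one must make sure that each kind of diagrammatic stabilization introduced in the paper is realized by a topological stabilization of the 1--bridge trisection, and conversely, so that step two matches the trisection-level stabilization of Theorem~\ref{thm:uniqueness} with a diagram-level stabilization of the kind appearing in the statement of the corollary. The remaining content at the level of the two marked points is essentially automatic, because they are canonically determined by the underlying trisection.
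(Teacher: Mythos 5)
The paper gives no explicit proof of this corollary---it is simply called an ``immediate diagrammatic corollary'' of Theorem~\ref{thm:uniqueness}---so your proposal is in effect spelling out what the authors take to be routine, and your three-step outline is exactly the right translation. Step~1 invokes Theorem~\ref{thm:uniqueness}; step~2 is the standard observation that a trisection stabilization performed away from $\Kk$ corresponds to a diagrammatic connected sum with one of the genus-one $S^4$ diagrams, performed away from the double points; and step~3 is the handlebody-theoretic fact that cut systems determining the same handlebody are related by handleslides and diffeomorphism.

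One small point deserves more care in step~3 than you give it. You say the marked points are ``carried along freely by the ambient diffeomorphism,'' but the nontrivial issue is that the \emph{handleslides} (not just the diffeomorphism) relating two cut systems for $H_\alpha$ must be performed along arcs disjoint from $\bold x_\pm$, as required by the paper's notion of pointed slide-diffeomorphism. This does hold, but it needs a one-line justification: the bridge arc $\omega_\alpha \subset H_\alpha$ is boundary-parallel, so $\bold x_\pm$ can be confined to a small disk in $\Sigma$ that misses both cut systems, and the handleslide region can then be chosen in the complement of that disk (equivalently, one works in the complement of the trivial arc, which is again a handlebody of the same genus). With that gap filled, your argument is complete and matches the intent of the paper.
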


In dimension three, a doubly pointed Heegaard diagram can be enriched with two arcs $\frak a$ and $\frak b$ connecting the points, one in the complement of the $\alpha$ curves and one in the complement of the $\beta$ curves; replacing the $S^0 \times B^2$ neighborhood of the points with a cylinder $B^1 \times S^1$ and extending the two arcs appropriately across this cylinder gives a Heegaard diagram for the result of integer surgery on the knot. The following can be seen as a $4$--dimensional generalization, involving an {\em arced trisection diagram} coming from a doubly pointed diagram; this will be defined carefully later but, for now, should be understood to be a trisection diagram on a surface with boundary (the trisection surface minus neighborhoods of the two points) augmented with three arcs $\frak a$ (red), $\frak b$ (blue) and $\frak c$ (green), in the complements, respectively, of the $\alpha$ (red), $\beta$ (blue) and $\gamma$ (green) curves. We consider here four cut-and-paste operations: \emph{sphere surgery}, in which $S^2 \times D^2$ is replaced by $B^3 \times S^1$, \emph{Gluck surgery}, in which $S^2 \times D^2$ is removed and glued back via the Gluck twist, \emph{$(\pm1)$--blowdown}, in which a neighborhood of a sphere of square $\pm1$ is replaced with $B^4$, and the \emph{$(\pm4)$--rational blowdown}, in which a neighborhood of a sphere of square $\pm4$ is replaced with a rational homology ball, as in~\cite{FinSte_Rational-blowdowns_97}.

\begin{thmx}\label{thm:surgery}
	Let $\DD$ be a doubly pointed $(g;k_1,k_2,k_3)$--trisection diagram for a 2--knot $(X,\Kk)$, and let $\DD^\circ$ be an associated arced trisection diagram for the knot exterior $E_\Kk$.  Consider the diagrams in the top row of Figure~\ref{fig:thm_gluings}.
	\begin{enumerate}
		\item If $\Kk\cdot\Kk = 0$, then the result $X(\Kk)$ of sphere surgery along $\Kk$ in $X$ is described by the $(g+1;k_1+1,k_2+1,k_3+1)$--trisection diagram $\DD^\circ\cup\DD_{B^3\times S^1}$.
		\item If $\Kk\cdot\Kk = 0$, then the result $X_*(\Kk)$ of Gluck surgery along $\Kk$ in $X$ is described by the $(g+1;k_1+1,k_2+1,k_3+1)$--trisection diagram $\DD^\circ\cup\DD_\frak a$.
		\item If $\Kk\cdot\Kk = 1$, then the result $X_{+1}(\Kk)$ of a $(+1)$--blowdown along $\Kk$ in $X$ is described by the $(g+1;k_1,k_2+1,k_3)$--trisection diagram $\DD^\circ\cup\DD_\frak a$.  Similarly, if $\Kk\cdot\Kk = -1$, then $X_{-1}(\Kk)$ is described by $\DD^\circ\cup\overline\DD_\frak a$.
		\item If $\Kk\cdot\Kk = 4$, then the result $X_{+4}(\Kk)$ of a 4--rational blowdown along $\Kk$ in $X$ is described by the $(g+2;k_1+1,k_2+1,k_3+1)$--trisection diagram $\DD^\circ\cup\DD_{B_{-4}}$.  Similarly, if $\Kk\cdot\Kk = -4$, then $X_{-4}(\Kk)$ is described by $\DD^\circ\cup\overline\DD_{B_{4}}$.
	\end{enumerate}
	These gluings of diagrams are illustrated in Figure~\ref{fig:thm_gluings}.
\end{thmx}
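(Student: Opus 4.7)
The plan is to decompose, in each case, the surgered manifold as the union of the exterior $E_\Kk$ with a ``plug'' $4$-manifold $Y$, realize this as a gluing of relative trisections whose boundary Heegaard splittings match, and identify the resulting closed trisection with the one described in the theorem. The doubly pointed diagram $\DD$ induces a relative trisection of $E_\Kk$ with diagram $\DD^\circ$: the tubular neighborhood $\nu(\Kk)$ of the $1$-bridge trisected sphere carries a canonical trisection, and removing it leaves the exterior with trisection surface $\Sigma^\circ$ (the twice-punctured trisection surface) equipped with the arcs $\frak a, \frak b, \frak c$, which are traces on $\Sigma^\circ$ of the cocores of the compressionbody neighborhoods of the three bridge arcs of $\Kk$. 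The induced boundary Heegaard splitting of $\partial E_\Kk$ is the standard genus-one splitting of the circle bundle over $S^2$ with Euler number $\Kk\cdot\Kk$.

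For each of the four operations I would then construct a relative trisection of the appropriate plug whose boundary Heegaard splitting agrees with the one induced by $\DD^\circ$. For sphere surgery, the plug is $B^3\times S^1$ with the standard genus-one trisection $\DD_{B^3\times S^1}$. For Gluck surgery, the plug is $S^2\times D^2$ re-glued via the Gluck twist; the twist modifies one boundary arc, giving the diagram $\DD_\frak a$. For $(\pm 1)$-blowdown, the plug is $B^4$ equipped with a genus-one relative trisection obtained from the data $\DD_\frak a$ (or its mirror). For $(\pm 4)$-rational blowdown, the plug is the Fintushel--Stern rational ball $B_{\pm 4}$ with an explicit genus-two relative trisection $\DD_{B_{\mp 4}}$. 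In each case, the framing condition $\Kk\cdot\Kk\in\{0,\pm 1,\pm 4\}$ is exactly what is needed for the boundary splittings to agree, and the plug's trisection is chosen so that the resulting $4$-manifold matches the surgery's defining recipe.

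The third step is to glue these two relative trisections along their common boundary Heegaard surface; by standard trisection theory this produces a closed trisection of the surgered $4$-manifold, with genus and $k_i$ parameters as in the statement. Combinatorially, this gluing is realized by extending $\DD^\circ$ across the puncture annuli by the plug diagram to obtain a closed trisection diagram, and the arithmetic of $(g+*;k_1+*,k_2+*,k_3+*)$ follows directly from counting the handles contributed by the plug.

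The main obstacle is the construction and verification of each plug's relative trisection, together with the careful matching of boundary Heegaard splittings, the $(\pm 4)$-rational blowdown being the most delicate: the Fintushel--Stern ball $B_{\pm 4}$ requires a genus-two relative trisection whose boundary splitting of $L(4,1)$ must match the lens-space splitting of $\partial \nu(\Kk)$ induced by $\DD^\circ$. Once the plug trisections are in hand (as illustrated in Figure~\ref{fig:thm_gluings}), the verification in each case reduces to tracking how the arcs $\frak a, \frak b, \frak c$ close up into the $\alpha, \beta, \gamma$ curves of the final trisection, a routine but combinatorially intricate diagrammatic check.
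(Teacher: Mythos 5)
Your high-level outline — decompose the surgered manifold as $E_\Kk$ union a plug, build a relative trisection of the plug, and glue along matching boundary data — is the right shape and matches the paper's strategy for cases (1) and (4), where the plug trisections ($\DD_{B^3\times S^1}$ and $\DD_{B_{\mp 4}}$) are honest annular arced diagrams and uniqueness of gluing (Laudenbach--Po\'enaru for $S^2\times S^1$, Fintushel--Stern for $L(4,\pm1)$) makes the argument essentially mechanical via the gluing-compatibility machinery. However, there are two genuine gaps in cases (2) and (3).

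First, you treat $\DD_\frak a$ as if it were a relative trisection diagram for a plug ($S^2\times D^2$ re-glued by the Gluck twist, or $B^4$). It is not: the paper is explicit that the twisted annulus fails the defining slide-equivalence conditions for an arced trisection diagram (its $\frak a$-, $\frak b$-, $\frak c$-arcs are not pairwise slide-equivalent). Consequently, you cannot invoke the gluing proposition with $\DD_\frak a$ directly, nor does the ``count the handles contributed by the plug'' arithmetic make sense for it. The paper's actual argument in the blowdown case glues on the \emph{honest} genus-one arced diagram $\overline\DD_\Ll$ for $B^4$ and then shows, by a Dehn twist, two handleslides, and a destabilization, that the resulting closed diagram is slide-diffeomorphic to $\DD^\circ\cup\DD_\frak a$. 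The same issue recurs for Gluck surgery, where the paper glues on the honest diagrams $\DD_\Ff$ or $\DD_{\widetilde\Ff}$ for $S^2\times D^2$ and then reduces by explicit moves to $\DD^\circ\cup\DD_\text{caps}$ or $\DD^\circ\cup\DD_\frak a$.

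Second, and more seriously for case (2), you never determine \emph{which} gluing of $S^2\times D^2$ the diagram $\DD^\circ\cup\DD_\frak a$ realizes. Unlike the other three cases, here the gluing map matters: $\id$ returns $X$ and $\tau$ gives $X_*(\Kk)$, and a priori $\DD^\circ\cup\DD_\frak a$ could describe either. Saying ``the twist modifies one boundary arc, giving $\DD_\frak a$'' asserts the conclusion without argument. The paper resolves this by an additional computation: using the composition rule for $\Upsilon$, it shows $\Upsilon(\DD_\Ff,\DD_{\widetilde\Ff})=\tau$ by computing $\DD_\Ff\cup\DD_{\widetilde\Ff}$ and recognizing it as the genus-two diagram for $S^2\wt\times S^2$, and separately uses $\DD^\circ\cup\DD_\text{caps}=\DD$ to pin down that $\Upsilon(\DD^\circ,\DD_\Ff)=\id$. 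Without some argument of this kind, your proposed proof leaves open which of the two fillings $\DD^\circ\cup\DD_\frak a$ actually produces.
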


\begin{figure}[h!]
	\centering
	\includegraphics[width=\textwidth]{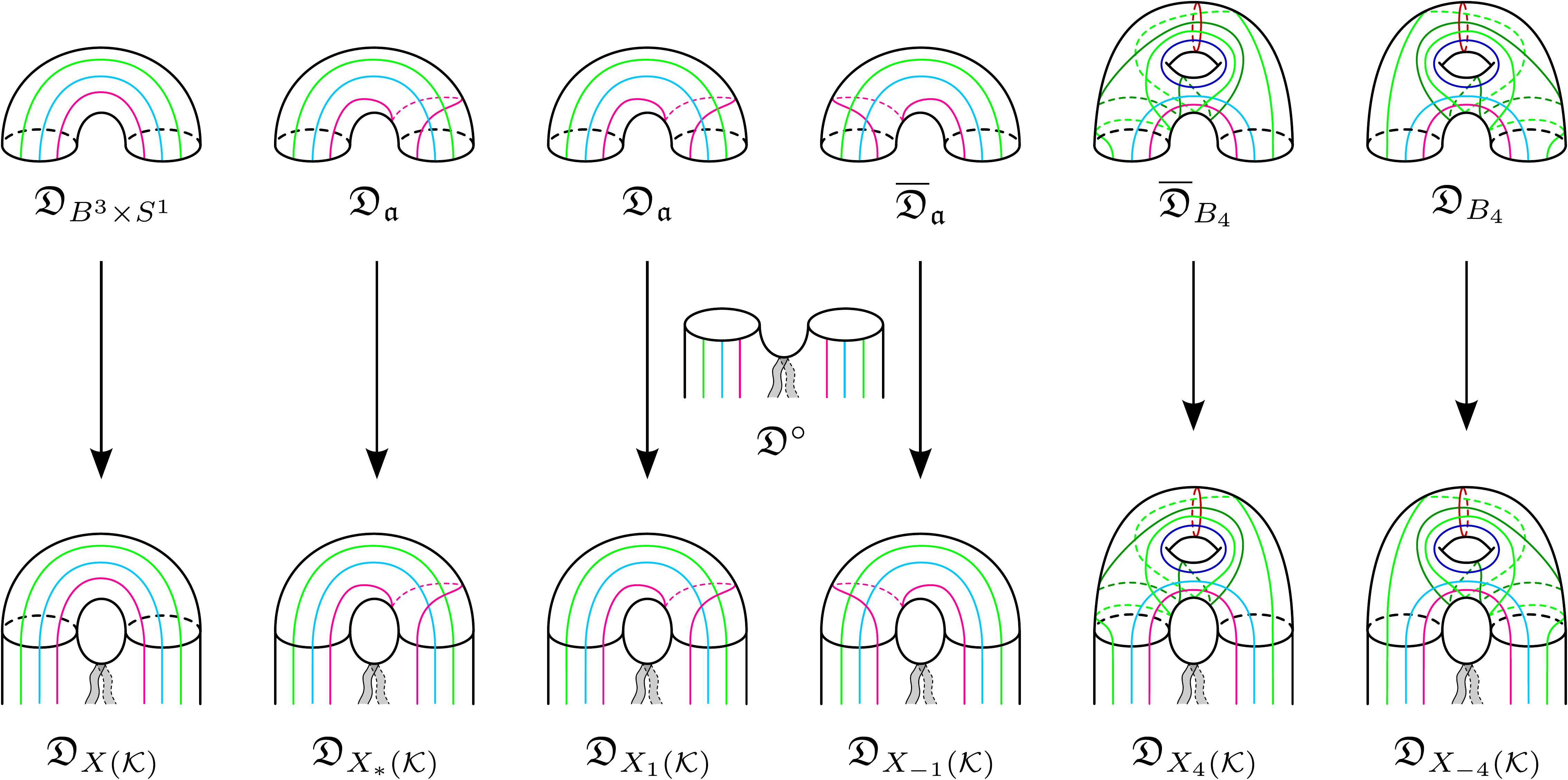}
	\caption{The diagrammatic gluings corresponding to the six (signed) surgery operations appearing in Theorem~\ref{thm:surgery}.  From left to right: sphere surgery, Gluck surgery, $(+1)$--blowdown, $(-1)$--blowdown, $(+4)$--rational blowdown, and $(-4)$--rational blowdown. The region crossing the saddle in $\DD^\circ$ is shaded to represent the fact that it may contain curves and arcs of any color.}
	\label{fig:thm_gluings}
\end{figure}

\begin{thmx}\label{thm:dual}
	Let $\DD^\circ$ be a 0--annular arced diagram for a 4--manifold $E$ with $\partial E\cong S^2\times S^1$.  Then, the diagrams $\DD$ and $\DD'$, as shown in Figure~\ref{fig:twin_diags}, are doubly pointed trisection diagrams for the only 2--knots $(X,\Kk)$ and $(X',\Kk')$ with $E_\Kk\cong E_{\Kk'}\cong E$.
\end{thmx}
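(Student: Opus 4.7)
The plan is to recover $(X,\Kk)$ from $E$ by gluing a tubular neighborhood $\nu(\Kk)\cong S^2\times D^2$ to $\partial E\cong S^2\times S^1$, and then to match the two combinatorially distinct gluings with the diagrams $\DD$ and $\DD'$ of Figure~\ref{fig:twin_diags}.

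The first step is to invoke Gluck's classical computation: the orientation--preserving mapping class group of $S^2\times S^1$ is generated by the Gluck twist $\tau$, and $\tau$ does not extend across $S^2\times D^2$. It follows that there are at most two ways, up to diffeomorphisms of the filling, to glue $S^2\times D^2$ to $E$, so at most two 2--knots have exterior diffeomorphic to $E$. These two gluings yield the pairs $(X,\Kk)$ and $(X',\Kk')$, with the core $S^2\times\{0\}$ of the glued--in $S^2\times D^2$ supplying the 2--knot in each case.

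Next, I would unpack the geometric content of a 0--annular arced diagram $\DD^\circ$: it encodes a trisection of $E$ whose trisection surface has two boundary circles bounding an annular collar of $\partial E$, together with arcs $\frak a,\frak b,\frak c$ recording how the $\alpha$--, $\beta$--, and $\gamma$--cut systems meet that collar. The 0--annular hypothesis guarantees that no compression curve is trapped in the annulus, so the boundary circles may be capped off by disks to produce a closed trisection surface, with the arcs $\frak a, \frak b, \frak c$ completing the three cut systems across the caps. Placing the two marked points at the centers of these caps, the identity gluing of $S^2\times D^2$ produces precisely the diagram $\DD$, while performing the Gluck--twisted gluing alters the capping by a meridional Dehn twist and produces $\DD'$. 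A direct check in each case confirms that the resulting doubly pointed diagram describes a 2--knot whose exterior is $E$.

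The main technical obstacle is verifying that the Gluck twist, when pushed onto the trisection surface near one boundary component, is realized diagrammatically exactly as the modification shown in Figure~\ref{fig:twin_diags}. I would arrange $\tau$ to be supported in a collar of the boundary annulus and check that, in this collar, it acts coherently on the $\frak a$--, $\frak b$--, and $\frak c$--arcs, producing three honest cut systems on the closed surface after capping. Once this coherence is established, the mapping class group enumeration from the first step shows that every 2--knot with exterior $E$ must come from one of the two gluings, and hence is presented by $\DD$ or $\DD'$, completing the proof.
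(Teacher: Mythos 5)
Your high-level plan matches the paper's: invoke Gluck's computation of the mapping class group of $S^2\times S^1$ to conclude there are at most two $2$--knots with exterior $E$, corresponding to the identity and $\tau$--twisted refillings by $S^2\times D^2$, and then identify the two diagrams $\DD$ and $\DD'$ of Figure~\ref{fig:twin_diags} with these two fillings, with the marked points tracking the core $S^2\times\{pt\}$. But the middle of your argument leaves the central verification unfinished, and that verification is not obviously routine. You acknowledge the ``main technical obstacle''---checking that the $\tau$--twisted gluing is realized diagrammatically by the modification producing $\DD'$---and propose to arrange $\tau$ in a collar and ``check coherence,'' but you never carry this out. Moreover, the proposal as stated does not obviously square with the target diagram: a Dehn twist applied to the capping disk would drag all three arc families through it uniformly, whereas the diagram $\DD'$ arises from gluing on the $\frak a$--twisted annulus $\DD_\frak a$, in which only the $\frak a$--arc is wound and the $\frak b$-- and $\frak c$--arcs are not. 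The paper avoids ever constructing $\tau$ on the trisection surface: in Lemma~\ref{lem:gluck} it glues on the concrete $0$--annular arced diagrams $\DD_\Ff$ and $\DD_{\widetilde\Ff}$ for $S^2\times D^2$ (the fiber exteriors in $S^2\times S^2$ and $S^2\wt\times S^2$), reduces these by explicit handleslides and destabilizations (Figures~\ref{fig:identity_moves} and~\ref{fig:gluck_moves}) to $\DD^\circ\cup\DD_\text{caps}$ and $\DD^\circ\cup\DD_\frak a$, and then shows the two gluing maps differ by $\tau$ by observing that $\DD_\Ff\cup\DD_{\widetilde\Ff}$ gives the standard diagram for $S^2\wt\times S^2$.

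A second gap is your closing ``direct check'' that the result is a bona fide doubly pointed trisection diagram for a $1$--bridge trisection of the dual $2$--knot. This is precisely the content of the paper's Lemma~\ref{lem:cores}, which needs a genuine argument (that the arcs joining the marked points lie in annular pages, sweep out $S^2\times\{pt\}$, and that this persists as a $1$--bridge position after the gluing); the paper then traces the marked points through the destabilizations in Figures~\ref{fig:identity_moves} and~\ref{fig:gluck_moves}. As a minor correction, the orientation-preserving mapping class group of $S^2\times S^1$ is $\Z_2\oplus\Z_2$, not cyclic generated by $\tau$; the correct statement (which still gives your conclusion) is that $\tau$ is the unique nontrivial class modulo those that extend over $S^2\times D^2$.
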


\begin{figure}[h!]
	\centering
	\includegraphics[width=.5\textwidth]{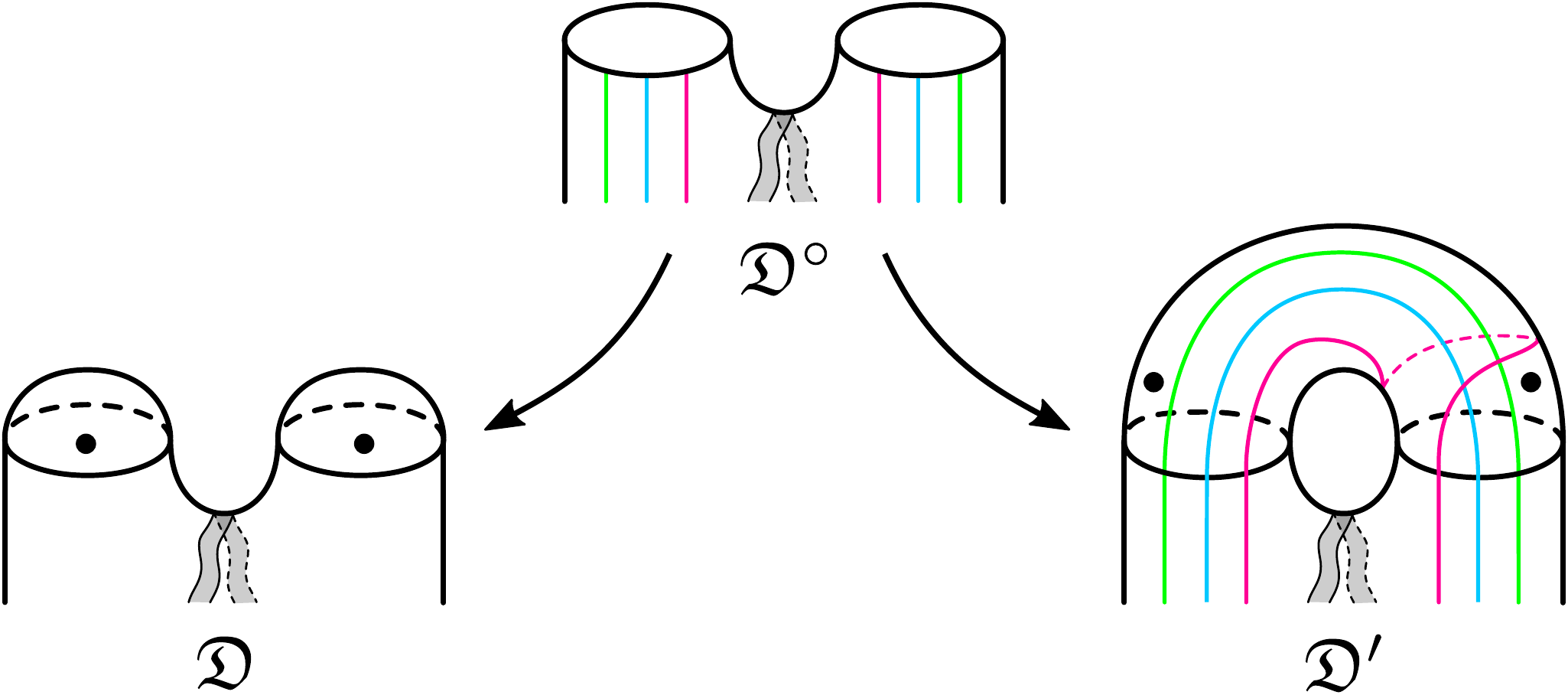}
	\caption{How to obtain doubly pointed trisection diagrams $\DD$ and $\DD'$ for the two 2--knots sharing a common exterior, given a $0$--annular arced trisection diagram $\DD^\circ$ for the exterior.}
	\label{fig:twin_diags}
\end{figure}

\subsection*{Organization}

The paper is organized as follows. In Section~\ref{sec:trisections}, we recall the foundations of the theory of trisections and trisection diagrams for both closed $4$--manifolds and $4$--manifolds with boundary, stating basic results about existence, uniqueness, and boundary data. In Section~\ref{sec:gluing}, we discuss the basic gluing results for trisections of $4$--manifolds with boundary and their diagrams, paying particular attention to the boundary parameterizations associated with the gluings. We also show how things work out particularly well when the boundary is a lens space with an annular open book. In Section~\ref{sec:bridge}, we give a detailed account of the adaptation of the theory of trisections to the setting of $2$--knots in $4$--manifolds and doubly pointed trisection diagrams and show how to get relative trisection diagrams for $2$--knot exteriors from doubly pointed diagrams (with lens space boundaries with annular open books).  Here, we prove Theorem~\ref{thm:uniqueness} and Corollary~\ref{coro:diag_uniqueness} and give a number of foundational examples that are needed for our surgery operations. In Section~\ref{sec:surgery}, we review the classical cut-and-paste operations involving 2--knots, and we prove Theorems~\ref{thm:surgery} and~\ref{thm:dual}.  We conclude in Section~\ref{sec:examples} by giving examples illustrating our diagrammatic techniques and results.

\subsection*{Acknowledgements}

The first author was supported by grant \#359873 from the Simons Foundation and NSF grant DMS-1664567. The second author was supported by NSF grants DMS-1400543 and DMS-1758087.  The second author would like to acknowledge Peter Lambert-Cole, Paul Melvin, Juanita Pinz\'on Caicedo, and Laura Starkston for their collaboration in a working group at the workshop \emph{Trisections and Low-Dimensional Topology} hosted by the American Institute of Mathematics in 2017, where bridge trisections of knotted surfaces in $\CP^2$ were first investigated.  We are grateful to AIM for this workshop, which has proved to be the origin of many interesting ideas.


\section{Trisections and their diagrams}\label{sec:trisections}

In this section, we review trisections of 4--manifolds, starting with closed 4--manifolds, then moving to the relative settings of compact 4--manifold with non-empty boundary.  Then, we discuss how these objects can be represented diagrammatically.

Given integers $g \geq k \geq 0$, consider the following standard manifolds:
\begin{itemize}
 \item $\Sigma_g = \#^g (S^1 \times S^1)$ is the standard closed, oriented genus $g$ surface;
 \item $H_g = \natural^g (S^1 \times B^2)$ is the standard, oriented genus $g$ handlebody, with $\partial H_g = \Sigma_g$;
 \item $Y_k = \#^k (S^2\times S^1)$ is the double of a genus $k$ handlebody; and
 \item $Z_k = \natural^k (B^3\times S^1)$ is the standard $4$--dimensional ``genus $k$'' $1$--handlebody, with $\partial Z_k = Y_k$.
\end{itemize}

The standard objects comprise the building blocks of a trisected 4--manifold.

\begin{definition}
	A \emph{$(g;k_1,k_2,k_3)$--trisection} of a $4$--manifold $X$ is a decomposition $X=X_1 \cup X_2 \cup X_3$, where, for each $i\in\Z_3$,
	\begin{enumerate}
	 	\item $X_i\cong Z_{k_i}$,
	 	\item $X_i\cap X_{i+1}\cong H_g$, and
	 	\item $X_1\cap X_2\cap X_3\cong \Sigma_g$.
	\end{enumerate}
	When $k_1=k_2=k_3=k$, this a {\em balanced} $(g,k)$--trisection, otherwise it is an {\em unbalanced} trisection. The {\em genus} of the trisection is $g$. The union of the three handlebodies
	$$(X_1 \cap X_2) \cup (X_2 \cap X_3) \cup (X_3 \cap X_1)$$
	is the {\em spine} of the trisection.  The handlebodies $X_1 \cap X_2$, $X_2 \cap X_3$, and $X_3 \cap X_1$ are denoted by $H_\beta$, $H_\gamma$, and $H_\alpha$, respectively, and called the \emph{spokes} of the trisection.  The common intersection $H_\alpha \cap H_\beta \cap H_\gamma$ is denoted by $\Sigma$ and called the \emph{core} of the trisection.
\end{definition}

See Figure~\ref{fig:rel_schematic} below for a schematic picture, with the caveat that this schematic more accurately depicts a trisection of a 4--manifold with boundary, which we will discuss in turn. Note that, for each $i$,
$$(X_{i-1} \cap X_i)\cup_\Sigma (X_i \cap X_{i+1})$$
is a genus $g$ Heegaard splitting of $\partial X_i \cong Y_k$, with $\Sigma$ oriented as $\partial(X_{i-1}\cap X_i)$.

\begin{definition}
\label{def:stab}
 Given a trisection $\Tt$ of a $4$--manifold $X$, for each $i\in\Z_3$, the {\em $i$--stabilization} of $\Tt$ is the trisection $\Tt'$ whose pieces $X_1'$, $X_2'$, and $X_3'$ are given in terms of the pieces $X_1$, $X_2$, and $X_3$ of $\Tt$ as follows. Choose a properly embedded boundary parallel arc $\omega$ in $X_{i-1} \cap X_{i+1}$. Let 
 	\begin{enumerate}
 		\item $X'_i = X_i \cup \overline{\nu(\omega)}$, while
 		\item $X'_{i-1} = X_{i-1} \setminus \nu(\omega)$ and
 		\item $X'_{i+1} = X_{i+1} \setminus \nu(\omega)$.
 	\end{enumerate}
  Note that this is well-defined up to isotopy independent of the choice of $\omega$, that it increases $g$ and $k_i$ by $1$ and does not affect $k_{i \pm 1}$.
\end{definition}

By stabilizing, any trisection can be made balanced. Noting that stabilization occurs in a ball, one could also describe stabilization as the connected sum with one of three standard genus one trisections of $S^4$; the only disadvantage of this is that it is not obviously an ambient operation inside a given $4$--manifold.

\begin{theorem}[\cite{GayKir_Trisecting-4-manifolds_16}]
 Every closed, connected, oriented $4$--manifold has a trisection, and any two trisections of the same $4$--manifold become isotopic after sufficiently many stabilizations.
\end{theorem}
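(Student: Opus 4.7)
The plan is to handle existence and uniqueness separately, in both cases via handle-theoretic and Morse-theoretic tools.

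For existence, I would start from a self-indexing handle decomposition of $X$ with a single $0$-handle, $k_1$ one-handles, some number of two-handles, $k_3$ three-handles, and a single $4$-handle. Setting $X_1$ to be the union of the $0$- and $1$-handles and $X_3$ to be the union of the $3$- and $4$-handles (read dually as a $1$-handlebody) immediately gives $X_i \cong Z_{k_i}$ for $i \in \{1,3\}$, with $\partial X_1 = Y_{k_1}$ and $\partial X_3 = Y_{k_3}$. I would then stabilize a Heegaard splitting of $\partial X_1$ to some large enough genus $g \geq k_1$, producing a surface $\Sigma \subset \partial X_1$ with $\partial X_1 = H_\alpha \cup_\Sigma H_\beta$. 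Using isotopy, handle slides, and further stabilization of $\Sigma$, one arranges the framed attaching link $L$ of the $2$-handles to lie on $\Sigma$ in a ``bridge-like'' position in which $L$ decomposes across $\Sigma$ into arc systems bounding disks in both $H_\alpha$ and $H_\beta$. Thickening the $2$-handle cores then assembles into the $1$-handles of a $1$-handlebody $X_2 \cong Z_{k_2}$, while the thickened attaching regions fatten $H_\alpha$ and $H_\beta$ across collars into the remaining two spokes. The main obstacle in this step is arranging $L$ so that simultaneously $X_2 \cong Z_{k_2}$, the induced decomposition of $\partial X_2 \cap \partial X_3$ agrees with the dual Heegaard splitting of $\partial X_3$ implicit in the $3$-handle attaching data, and the triple intersection is a single copy of $\Sigma$.

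For uniqueness, my plan is to adopt the Morse $2$-function viewpoint. Each trisection of $X$ can be encoded by a smooth map $f \colon X \to B^2$ whose critical image is trivalent, whose indefinite-fold locus parameterizes the central surface $\Sigma$, and whose three complementary sectors pull back to $X_1$, $X_2$, and $X_3$. Existence gives such maps $f_0$ and $f_1$ realizing any two given trisections, and the space of smooth maps $X \to B^2$ is connected, so one can select a generic $1$-parameter family $f_t$ connecting $f_0$ to $f_1$. Along this path only finitely many codimension-$1$ events occur: births and deaths of fiberwise Morse critical points, cusp merges, swallowtails, and tangencies between critical locus and reference arcs. For each such event I would exhibit a local model and check that the induced change in the associated trisection is either an ambient isotopy or exactly the stabilization of Definition~\ref{def:stab}.

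The main obstacle is this singularity-by-singularity verification, together with showing that the trivalent structure on the critical image in $B^2$ can be maintained throughout the family, possibly by introducing additional births (which themselves manifest as further stabilizations). Once each local model has been shown to correspond to an isotopy or a stabilization, the two original trisections are related by a finite sequence of stabilizations and isotopies, and hence admit a common stabilization as claimed.
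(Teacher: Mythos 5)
This theorem is cited in the paper from Gay and Kirby's \emph{Trisecting $4$--manifolds} and is not reproved here, so there is no in-paper argument to compare against; I will compare your sketch to the cited reference. Your two-pronged plan does follow the actual Gay--Kirby strategy: existence via a self-indexing handle decomposition with the $0$-- and $1$--handles forming $X_1$, the $3$-- and $4$--handles forming $X_3$, and the $2$--handle region sandwiched in between using a Heegaard splitting of $\partial X_1$; and uniqueness via Cerf theory for Morse $2$--functions (trisected maps to a disk with trivalent critical image), analyzing a generic homotopy $f_t$ and matching each codimension-one singularity event with an isotopy or stabilization.

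Two small inaccuracies in the existence sketch are worth flagging. First, in Gay--Kirby the framed attaching link $L$ of the $2$--handles is isotoped, after stabilizing $\Sigma$ and handlesliding, to lie \emph{entirely on} the Heegaard surface $\Sigma$ with its surface framing, not into a bridge position where it decomposes across $\Sigma$ into arcs; what you describe is closer in spirit to the later Meier--Zupan bridge trisection picture than to the original existence argument. Second, it is not that the ``$2$--handle cores assemble into the $1$--handles of $X_2$''; rather, $X_2$ is $(H_\beta\times I)\cup(\text{$2$--handles})$, and the $2$--handles \emph{cancel} most of the $g$ obvious $1$--handles of $H_\beta\times I\cong Z_g$ precisely because, after the arrangement of $L$ on $\Sigma$, the attaching curves together with an appropriate subset of $\beta$ form a cut system. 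This cancellation, plus Waldhausen's theorem on uniqueness of Heegaard splittings of $\#^k(S^1\times S^2)$ to reconcile the induced splitting of $\partial(X_1\cup X_2)$ with that of $\partial X_3$, is the technical heart of the existence proof. Your uniqueness sketch correctly identifies the key move (maintaining the trivalent cusp structure by introducing births, which manifest as stabilizations) as the main obstacle, and that is indeed where the bulk of the work in the Gay--Kirby uniqueness argument lies.
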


\subsection{Trisections of compact 4--manifolds with non-empty boundary}\label{subsec:relative}\ 

Next, we recall the extension of the theory of trisections to compact 4--manifolds with connected,  non-empty boundary.  First, we need a more subtle understanding of the manifold $Y_l = \#^l(S^2\times S^1)$.  For $p\geq 0$ and $b\geq 1$, let $\Sigma_{p,b}$ denote the compact surface obtained by removing the interiors of $b$ small disks from the closed surface of genus $p$.  Consider the abstract open book $(\Sigma_{p,b},\id)$, whose page is $\Sigma_{p,b}$ and whose monodromy is the identity.  The total space of this abstract open book is $M_1\cong \#^{2p+b-1}(S^2\times S^1)$.  Pick three pages in this open book:  $P^+$, $P^-$, and $P^0$.  We augment $M_1$ by connected summing with a manifold $M_2$, which we take to be a copy of $\#^k(S^2\times S^1)$.  We assume $M_2$ is equipped with the standard genus $n$ Heegaard splitting.  We perform the connected sum at a point contained in the interior of $P^0$ in $M_1$ and at a point on the Heegaard surface $F$ in $M_2$.

We write 
$$Y_l = \#^l(S^2\times S^1) = M_1\# M_2 = \left(\#^{2p+b-1}(S^2\times S^1)\right)\#\left(\#^k(S^2\times S^1)\right),$$
and we note that $l = k + 2p+b-1$.  Moreover, we have the following decomposition:
$$Y_l = Y_{g,l;p,b}^-\cup Y_{g,l;p,b}^0\cup Y_{g,l;p,b}^+,$$
with pieces defined as follows.  We let $Y_{g,l;p,b}^0$ denote the compact portion of $M_1$ co-bounded by $P^-$ and $P^+$ and not containing $P^0$.  We let $Y_{g,l;p,b}^\pm$ denote the compact portion of $M_1\#M_2$ co-bounded by $P^\pm$ and $P_0\#F$.  The piece $Y_{g,l;p,b}^0$ is diffeomorphic to
$$\Sigma_{p,b}\times I/\!\sim,$$
where $(x,t)\sim(x,t')$ for all $x\in\partial\Sigma_{p,b}$ and $t,t'\in I$.  In other words, the vertical boundary has been collapsed, so $Y_{g,l;p,b}^0$ is a sort of \emph{lensed} product cobordism between the pages $P^-$ and $P^+$.  Each of $Y_{g,l;p,b}^\pm$ is a sort of lensed compression body, diffeomorphic to the result of first attaching $g-p$ 3--dimensional 2--handles to $\Sigma_{g,b}\times I$ along $\Sigma_{g,b}\times\{1\}$, then collapsing the vertical portion $(\partial\Sigma_{g,b})\times I$, as before.

\begin{definition}
	A \emph{relative $(g;k_1,k_2,k_3;p,b)$--trisection} of a 4--manifold $X$ is a decomposition
	$$X = X_1\cup X_2\cup X_3$$
	such that, for each $i\in\Z_3$, there is a diffeomorphism $\phi_i\colon X_i\to Z_{l_i}$ satisfying
	$$\phi_i(X_i\cap X_{i+1}) = Y_{g,l_i;p,b}^-, \hspace{.25in} \phi_i(X_i\cap\partial X) = Y_{g,l_i;p,b}^0,\hspace{.25in} \text{and}\hspace{.25in} \phi_i(X_i\cap X_{i-1}) = Y_{g,l_i;p,b}^+,$$
	where $l_i = k_i+2p+b-1$.
	We adopt the notation $H_\alpha$, $H_\beta$, $H_\gamma$, and $\Sigma$, just as in the closed case, and the concepts of \emph{(un)balanced}, \emph{genus}, and \emph{spine} are defined in the same way, as well.  A schematic of a relative trisection is shown in Figure~\ref{fig:rel_schematic}.
\end{definition}

A relative trisection $\Tt$ of a compact 4--manifold $X$ cuts $\partial X$ into three pieces, namely, the three pre-images of the $Y_{g,l_i;p,b}^0$.  Since each of these is a lensed product cobordism, we see that $\partial X$ inherits an open book decomposition from $\Tt$ with pages diffeomorphic to $\Sigma_{p,b}$ (three of which are given by the pre-images of the pages $P^\pm$ in the $Y_{k_i}$) and binding given by $\partial \Sigma\subset \partial X$.

\begin{figure}[h!]
	\centering
	\includegraphics[width=.35\textwidth]{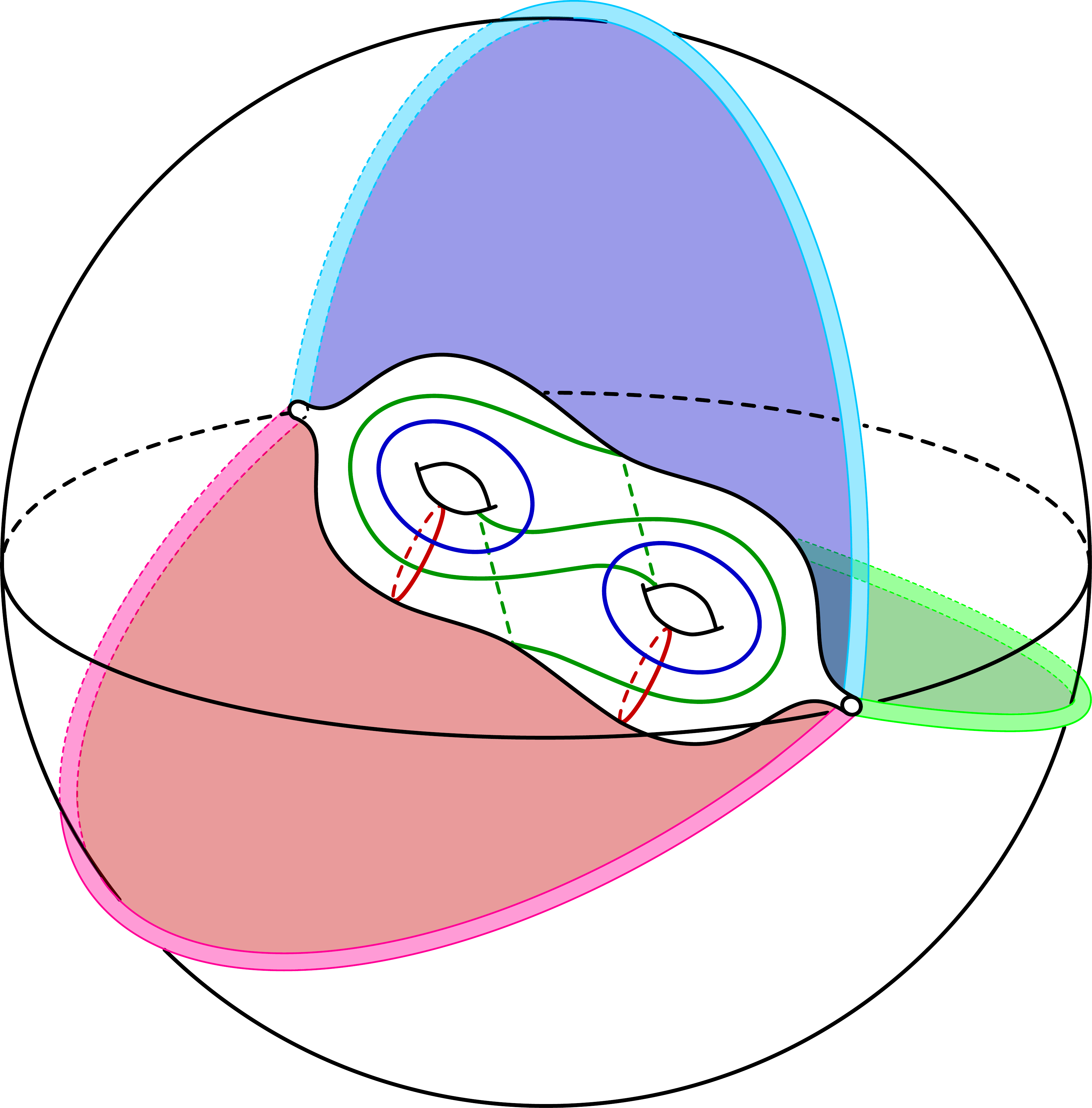}
	\caption{A schematic illustration of a relative trisection of a 4--manifold $X$, which is depicted as the black 3--ball.  Inside, the genus two trisection surface sits as a properly-embedded surface, intersecting the boundary of the 4--manifold in the two-component binding.  Shown on the boundary are three annular pages filling the binding, each of which is connected back to the trisection surface by a compression body.}
	\label{fig:rel_schematic}
\end{figure}

\begin{remark}\label{rmk:inside_k}
	The 4--dimensional 1--handlebodies $X_i$ can be thought of as being built (from the outside in) by starting with $P\times I\times I$ and attaching $k_i$ 1--handles.  Here, $P\times I$ is an interval product of a page in $\partial X\cap X_i$, while the second product with $I$ is a thickening into the interior of $X_i$. (Both thickenings are assumed to be lensed.)  It is thus that the piece $X_i$ has two relevant complexity measures:  $l_i$ is the measure of its total topological complexity, while $k_i$ measures the \emph{interior} complexity.  For example, a manifold with a trisection having $k_i=0$ can be built (relative to its boundary) without 1--handles.  Note that our convention of labeling the interior complexities by $k_i$ and the total complexity by $l_i$ is opposite that of~\cite{CasGayPin_Diagrams-for-relative_18}.
\end{remark}

The stabilization operations introduced above give well-defined stabilization operations for relative trisections, as well, and we have a similar existence and uniqueness statement.

\begin{theorem}[\cite{GayKir_Trisecting-4-manifolds_16}] \label{thm:relexistunique}
	Given a compact, connected, oriented $4$--manifold $X$ with connected, nonempty boundary, and an open book decomposition on $\partial X$, there is a trisection of $X$ inducing the given open book.
	Any two relative trisections for a 4--manifold $X$ that induce isotopic open book decompositions of $\partial X$ become isotopic after sufficiently many stabilizations.
\end{theorem}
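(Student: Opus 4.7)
The plan is to follow the Morse-theoretic approach of Gay--Kirby, extended to the relative setting. The key idea is to encode a relative trisection as a generic smooth map $g\colon X\to D^2$ whose restriction to $\partial X$ realizes the given open book structure, and whose fold and cusp data can be put into a standard form by a sequence of Cerf-type moves. This reduces the existence-and-uniqueness problem to a 0- and 1-parameter classification of such maps rel $\partial X$.

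For existence, I would first translate the prescribed open book on $\partial X$ into a collar-smoothed map $\partial X\to D^2$: use the fibration $\pi\colon \partial X\setminus B\to S^1$ for the angular coordinate and collapse a collar of the binding $B$ to a central disk to get the radial coordinate. By standard transversality one extends this to a generic smooth map $g\colon X\to D^2$ agreeing with this boundary model. I would then analyze the critical locus of $g$, which consists of folds and cusps, and apply the relative versions of the Gay--Kirby moves (swallowtail, cusp merge, eye birth, fold isotopy) supported in the interior of $X$ in order to arrange all indefinite folds into three concentric families around a central region of definite folds, matched with the radial direction of $D^2$ divided into three sectors. The preimages of the three sectors become $X_1$, $X_2$, $X_3$; counting indices of the folds crossed by radial arcs yields the diffeomorphisms $\phi_i\colon X_i\to Z_{l_i}$ with the correct lensed-cobordism structure on the pieces $Y_{g,l_i;p,b}^\pm$ and $Y_{g,l_i;p,b}^0$.

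For uniqueness, given two relative trisections $\Tt_0$ and $\Tt_1$ of $X$ inducing isotopic open books, I would use the isotopy to assume they induce identical open books on $\partial X$. Each trisection gives a Morse 2-function $g_0,g_1\colon X\to D^2$ which agree with the open book data near $\partial X$. I would connect them through a generic 1-parameter family $g_t$, fixed near $\partial X$, and apply Cerf theory for Morse 2-functions to enumerate the codimension-1 singular events the family crosses: cusp birth/death, unmerging/merging of folds, swallowtail moves, and so on. By the closed-case analysis, each such event corresponds, after trivial handle isotopies, to either a stabilization of the trisection at one of its three spokes or its inverse. Thus after sufficiently many stabilizations of both $\Tt_0$ and $\Tt_1$, one obtains trisections connected by the standardized family $g_t$, hence isotopic.

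The main obstacle is the \emph{relative} Cerf analysis. In the closed case one freely manipulates singularities across all of $X$, but here every move must be supported away from a collar of $\partial X$ so that the prescribed open book is preserved on the nose. This requires checking that each elementary birth/death/merge move admits a version localized in the interior, and, when the natural parameterization would force a singularity toward the boundary, that an ambient isotopy can be used to pull it back into the interior without creating new codimension-$\geq 2$ degeneracies. Once this boundary-preserving variant of the Gay--Kirby move calculus is in place, the matching of 1-parameter singular events to spoke stabilizations is parallel to the closed case, and existence is a direct adaptation of the closed existence argument with a chosen boundary model.
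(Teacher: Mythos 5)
This paper does not prove Theorem~\ref{thm:relexistunique}; it imports it verbatim from Gay and Kirby's \emph{Trisecting 4-manifolds}. Your Morse 2-function and relative Cerf-theory sketch is exactly the strategy of that cited source: represent the trisection by a generic map to $D^2$ matching the open book near $\partial X$, standardize the fold/cusp locus by interior-supported moves, and for uniqueness connect two such maps by a generic path rel boundary and identify the codimension-one events with stabilizations. So your proposal is a correct outline of the same approach, and there is nothing in this paper's (nonexistent) argument to compare it against beyond the citation.
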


\subsection{(Relative) trisection diagrams}\label{subsec:Tdiags}\ 

A key feature of the theory of trisections is that the data of a (relative) trisection can expressed diagrammatically via curves on surfaces. Here, we summarize this feature following~\cite{CasGayPin_Diagrams-for-relative_18}. We begin with the closed case.

\begin{definition}
\label{def:cut}
 A {\em cut system} on a closed, connected genus $g$ surface $\Sigma$ is a collection of $g$ disjoint simple closed curves on $\Sigma$ which collectively cut $\Sigma$ into a genus $0$ surface. Two cut systems are {\em slide-equivalent} if they are related by a sequence of handleslides. Two tuples $(\Sigma,\delta_1, \ldots, \delta_n)$ and $(\Sigma',\delta'_1, \ldots, \delta'_n)$, where each $\delta_i$ and $\delta'_i$ is a cut system, are {\em slide-diffeomorphic} if there is a diffeomorphism $\phi: \Sigma \to \Sigma'$ such that each $\phi(\delta_i)$ is slide-equivalent to $\delta'_i$.
\end{definition}
As is well known, a cut system $\alpha$ on $\Sigma$ determines (up to diffeomorphism rel. boundary) a handlebody $H_\alpha$ with $\partial H_\alpha = \Sigma$, every handlebody $H$ with $\partial H = \Sigma$ is $H_\alpha$ for some cut system $\alpha$, and $H_\alpha$ and $H_{\alpha'}$ are diffeomorphic rel. boundary if and only if $\alpha$ and $\alpha'$ are slide-equivalent~\cite{Joh_Topology-and-combinatorics_95}.

\begin{definition}
\label{def:closed}
 A {\em Heegaard diagram} is a triple $(\Sigma,\alpha,\beta)$ where $\Sigma$ is a surface and each of $\alpha$ and $\beta$ are cut systems on $\Sigma$. The Heegaard diagram appearing as the top graphic of Figure~\ref{fig:Heeg_diags} is called the \emph{trivial $(g,k)$--diagram}.  (Ignore the double points in the center of this graphic for now.)  A {\em Heegaard triple} is a $4$--tuple $(\Sigma, \alpha,\beta,\gamma)$ where $\Sigma$ is a surface and each of $\alpha$, $\beta$ and $\gamma$ are cut systems on $\Sigma$. A $(g;k_1,k_2,k_3)$--{\em trisection diagram} is a genus $g$ Heegaard triple $(\Sigma, \alpha,\beta,\gamma)$ such that each of $(\Sigma,\alpha,\beta)$, $(\Sigma,\beta,\gamma)$ and $(\Sigma,\gamma,\alpha)$ is slide-diffeomorphic to the trivial $(g,k_i)$--diagram, where $k_1$, $k_2$, and $k_3$ count the number of parallel curves in the diagram of the figure corresponding to $(\Sigma,\alpha,\beta)$,  $(\Sigma,\beta,\gamma)$ and $(\Sigma,\gamma,\alpha)$, respectively. (Again, we ignore the central double points for now.)
\end{definition}

\begin{figure}[h!]
	\centering
	\includegraphics[width=.9\textwidth]{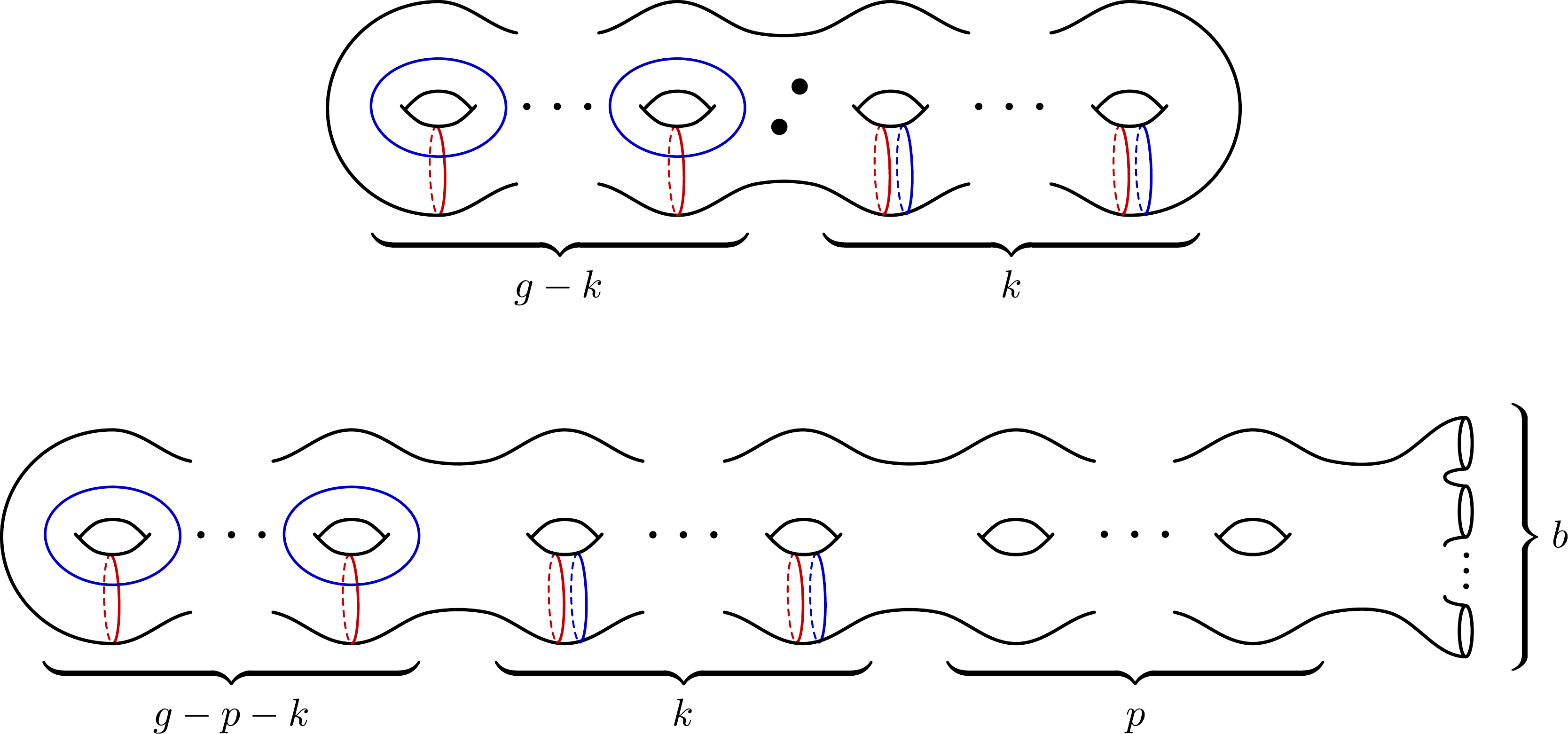}
	\caption{(Top) The trivial $(g,k)$--diagram; each of the three Heegaard diagrams comprising a genus $g$ trisection diagram is slide-diffeomorphic to this diagram for some value $k$.  (Bottom) The trivial $(g,k;p,b)$--diagram; each pair of genus $p$ cut systems for a genus $g$ relative trisection diagram is slide-diffeomorphic to this diagram for some value of $k$.}
	\label{fig:Heeg_diags}
\end{figure}

A Heegaard diagram $(\Sigma,\alpha,\beta)$ determines a closed, connected oriented $3$--manifold $H_\alpha \cup_\Sigma \overline H_\beta$, well-defined up to orientation preserving diffeomorphism by the slide-diffeomorphism type of the diagram, and any closed, connected, oriented $3$--manifold is described by a diagram.  The trivial $(g,k)$--diagram determines the genus $g$ Heegaard splitting of $\#^k(S^2\times S^1)$.

A Heegaard triple $(\Sigma,\alpha,\beta,\gamma)$ determines a compact, connected, oriented $4$--manifold with three boundary components by gluing $I \times H_\alpha$, $I \times H_\beta$ and $I \times H_\gamma$ to $D^2 \times \Sigma$ along $I_\alpha \times \Sigma$, $I_\beta \times \Sigma$ and $I_\gamma \times \Sigma$, where $I_\alpha$, $I_\beta$ and $I_\gamma$ are three disjoint arcs in $S^1$ (with the $\alpha,\beta,\gamma$ order being {\em clockwise} around $S^1$). When this Heegaard triple is a trisection diagram $\DD$, each of the three boundary components is a connected sum of copies of $S^2\times S^1$, which can be filled in uniquely with boundary connected sums of copies of $B^3\times S^1$~\cite{LauPoe_A-note-on-4-dimensional_72}, and this closed $4$--manifold is denoted $X(\DD)$. Note that $X(\DD)$ comes with an implicit $(g;k_1,k_2,k_3)$ trisection $X(\DD)=X_1 \cup X_2 \cup X_3$, such that $\Sigma = X_1 \cap X_2 \cap X_3 \subset X(\DD)$ is the trisection surface and such that $\alpha$, $\beta$ and $\gamma$, resp., bound disks in the handlebodies $X_3 \cap X_1$, $X_1 \cap X_2$ and $X_2 \cap X_3$, resp.

The diagrammatic content of~\cite{GayKir_Trisecting-4-manifolds_16} is that every closed, connected oriented $4$--manifold is $X(\DD)$ for some trisection diagram $\DD$, that slide-diffeomorphic diagrams give diffeomorphic $4$--manifolds, and that two diagrams give diffeomorphic $4$--manifolds if and only if they are related by slide-diffeomorphism and {\em stabilization}, where diagrammatic stabilization is connected summing with one of the three genus one trisection diagrams for $S^4$ shown in Figure~\ref{fig:BasicStab}. Furthermore, any given trisection of the original $4$--manifold is actually the trisection coming from a diagram.

\begin{figure}[h!]
	\centering
	\includegraphics[width=.65\textwidth]{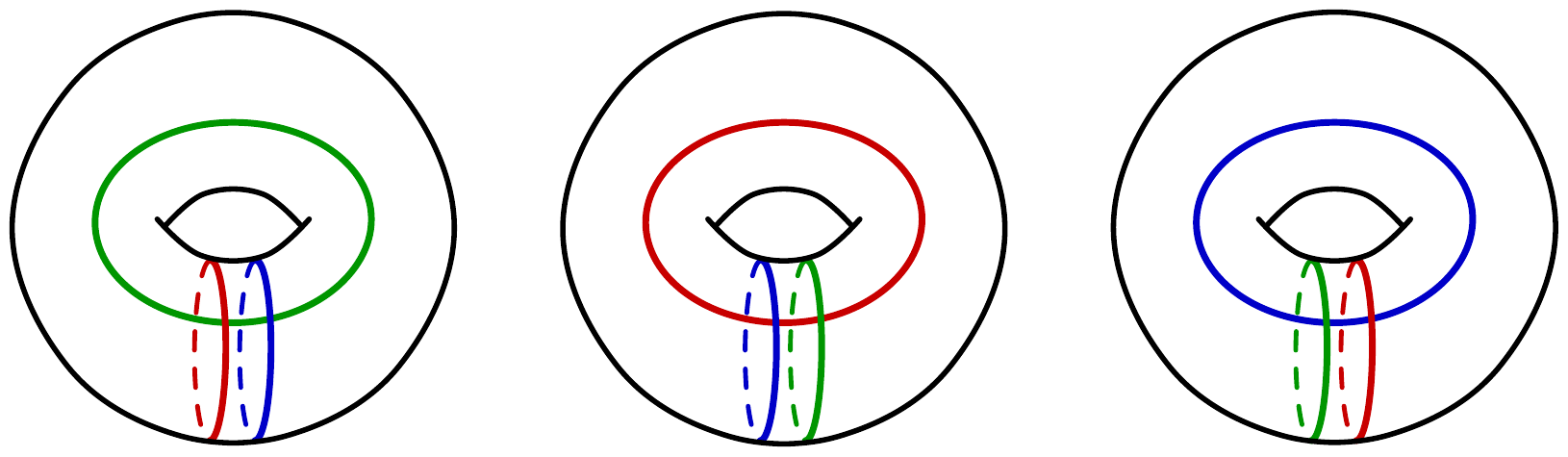}
	\caption{The three genus one trisection diagrams for $S^4$, each of which is unbalanced.  Any given trisection diagram or relative trisection diagram can be stabilized by forming the connected sum with one of these three diagrams.}
	\label{fig:BasicStab}
\end{figure}

\begin{definition}
 A {\em genus $p$ cut system} on a compact, connected, orientable genus $g$ surface $\Sigma$ with $b$ boundary components is a collection of $g-p$ disjoint simple closed curves on $\Sigma$ which collectively cut $\Sigma$ into a connected genus $p$ surface. The notions of \emph{slide-equivalent} and \emph{slide-diffeomorphic} carry over verbatim from Definition~\ref{def:cut}.
\end{definition}

In this more general setting, a genus $p$ cut system on a genus $g$ surface $\Sigma$ with $b$ boundary components determines (up to diffeomorphism rel. boundary) a compression body $C_\alpha$ with $\partial C_\alpha = \Sigma \cup (I \times \partial \Sigma) \cup \Sigma_\alpha$, where $\Sigma_\alpha$ is the result of surgering $\Sigma$ along $\alpha$. Every such compression body is $C_\alpha$ for some cut system $\alpha$, and $C_\alpha$ and $C_\alpha'$ are diffeomorphic rel. boundary if and only if $\alpha$ and $\alpha'$ are slide-equivalent. (Here, ``rel. boundary'' really means relative to the boundary respecting the decomposition of the boundary as $\Sigma \cup (I \times \partial \Sigma) \cup \Sigma_\alpha$, i.e. respecting the structure of $C_\alpha$ as a relative cobordism from $\Sigma$ to $\Sigma_\alpha$.)

\begin{definition}
 A $(g;k_1,k_2,k_3;p,b)$--{\em relative trisection diagram} is a $4$--tuple $(\Sigma, \alpha,\beta,\gamma)$ where $\Sigma$ is a genus $g$ compact, connected surface with $b$ boundary components, $\alpha$, $\beta$ and $\gamma$ are genus $p$ cut systems on $\Sigma$, and each of $(\Sigma,\alpha,\beta)$, $(\Sigma,\beta,\gamma)$ and $(\Sigma,\gamma,\alpha)$ is slide-diffeomorphic to the trivial $(g,k_i;p,b)$--diagram shown in Figure~\ref{fig:Heeg_diags}.
\end{definition}

The first author, with Castro and Pinzon-Caicedo in~\cite{CasGayPin_Diagrams-for-relative_18}, showed that relative trisection diagrams uniquely determine relatively trisected $4$--manifolds with boundary. In other words, for every $(g;k_1,k_2,k_3;p,b)$ relative trisection diagram $\DD = (\Sigma, \alpha,\beta,\gamma)$, there is a $(g;k_1,k_2,k_3;p,b)$--trisected $4$--manifold with boundary, $X(\DD)=X_1 \cup X_2 \cup X_3$, such that $\Sigma = X_1 \cap X_2 \cap X_3$ and such that $\alpha$, $\beta$ and $\gamma$ bound disks in the compression bodies $X_1 \cap X_2$, $X_2 \cap X_3$ and $X_3 \cap X_1$, respectively; and $X(\DD)$ is uniquely determined up to trisected diffeomorphism. In particular this also determines a $3$--manifold $\partial X(\DD)$ equipped with a genus $p$ open book decomposition with $b$ binding components, well-defined up to open book preserving diffeomorphism. Furthermore, the existence and uniqueness result in the relative case in~\cite{GayKir_Trisecting-4-manifolds_16} translates diagrammatically into the statement that for every $4$--manifold $X$ equipped with an open book decomposition of $\partial X$, there is a relative trisection diagram $\DD$ with $X(\DD) \cong X$ that induces the given open book on the boundary and that any two diagrams giving diffeomorphic $4$--manifolds with diffeomorphic boundary open books become slide-diffeomorphic after diagrammatic stabilization. The stabilization in the relative case is exactly the same as the closed case, being an interior connected sum with a diagram in Figure~\ref{fig:BasicStab}.

What is not immediately clear from the above is how to understand the $3$--manifold and its open book decomposition in terms of a diagram, and for this we need to add arcs to our cut systems.
\begin{definition}
 Given a genus $p$ cut system $\alpha$ on $\Sigma$, an {\em arc system relative to} $\alpha$ is a collection $\frak a$ of $2p$ properly embedded arcs in $\Sigma$, disjoint from $\alpha$, such that cutting along $\frak a$ and surgering along $\alpha$ turns $\Sigma$ into a disk. If $\frak a$ and $\frak a'$ are  arc systems relative to cut systems $\alpha$ and $\alpha'$, respectively, we say that $(\alpha,\frak a)$ is slide-equivalent to $(\alpha',\frak a')$ if the one can be transformed to the other by ordinary handleslides on the cut systems and by sliding arcs from the arc system over curves from the cut system. Note that we {\em do not allow} the sliding of arcs over arcs, nor isotopies that move points on $\partial \Sigma$. 
\end{definition}

\begin{definition}
	An {\em arced relative trisection diagram} (or \emph{arced trisection diagram}, or even \emph{arced diagram}, for short) is a tuple $(\Sigma,\alpha,\beta,\gamma,\frak a,\frak b,\frak c)$ such that $(\Sigma,\alpha,\beta,\gamma)$ is a relative trisection diagram, $\frak a$ (resp. $\frak b$, resp. $\frak c$) is an arc system relative to $\alpha$ (resp. $\beta$, resp. $\gamma$) and such that we have the following pairwise standardness conditions:
	\begin{enumerate}
		\item $(\Sigma,\alpha,\beta,\frak a,\frak b)$ is slide-equivalent to some $(\Sigma,\alpha',\beta',\frak a',\frak b')$ such that $(\Sigma,\alpha',\beta')$ is diffeomorphic to the trivial $(g,k_i;p,b)$--diagram and $\frak a' = \frak b'$.
		\item $(\Sigma,\beta,\gamma, \frak b,\frak c)$ is slide-equivalent to some $(\Sigma,\beta',\gamma',\frak b',\frak c')$ such that $(\Sigma,\beta',\gamma')$ is diffeomorphic to the trivial $(g,k_i;p,b)$--diagram and $\frak b' = \frak c'$.
	\end{enumerate}
	Observe that $\partial \frak a = \partial \frak b = \partial \frak c$.
\end{definition}

\begin{definition}
	A {\em completed arced relative trisection diagram} (or \emph{completed arced diagram} for short) is a tuple $(\Sigma,\alpha,\beta,\gamma,\frak a,\frak b,\frak c, \frak a^*)$ such that $(\Sigma,\alpha,\beta,\gamma,\frak a,\frak b,\frak c)$ is an arced diagram and such that $(\Sigma,\gamma, \alpha, \frak c,\frak a^*)$ is slide-equivalent to some $(\Sigma,\gamma',\alpha'',\frak c',\frak a'')$ such that $(\Sigma,\gamma',\alpha'')$ is diffeomorphic to the trivial $(g,k;p,b)$--diagram.
\end{definition}

Note that one cannot in general assume $\frak a = \frak a^*$ in a completed diagram, since, as will become clear shortly, this would imply trivial monodromy.

\begin{definition}
	An {\em abstract open book} is a pair $(P,\phi)$ where $P$ (the \emph{page}) is a compact, connected, oriented surface with nonempty boundary and $\phi$ (the \emph{monodromy}) is a self-diffeomorphism of $P$ which is the identity on $\partial P$. Two abstract open books with the same page are {\em isotopic} if their monodromies are isotopic relative to the boundary of the page. Two abstract open books $(P,\phi)$ and $(P',\phi')$ are diffeomorphism isotopic if there is a diffeomorphism between $P$ and $P'$ with respect to which the monodromies are isotopic. An abstract open book $(P,\phi)$ determines a model $3$--manifold with open book decomposition $M(P,\phi)=[0,1] \times P/\!\sim$ where $(p,1) \sim (\phi(p),0)$ for all $p \in P$ and $(p,s) \sim (p,t)$ for all $p \in \partial P$ and all $s,t \in [0,1]$.
\end{definition}

\begin{definition}
	Consider a relative trisection diagram $\DD = (\Sigma, \alpha,\beta,\gamma)$. The {\em page associated to $\DD$} is the surface $P_\DD = \Sigma_\alpha$ obtained by surgering all the $\alpha$ curves; this has genus $p$ with $b$ boundary components, if $\Sigma$ had $b$ boundary components and $\alpha$ is a genus $p$ cut system. Now consider a completed arced diagram $\DD^* = (\Sigma,\alpha,\beta,\gamma,\frak a,\frak b,\frak c, \frak a^*)$ whose underlying relative diagram is $\DD$. Note that both $\frak a$ and $\frak a^*$ descend to well-defined (up to isotopy rel. boundary) arc systems on $P_\DD$  The {\em monodromy associated to $\DD^*$} is the diffeomorphism (well-defined up to isotopy rel. boundary) $\phi(\DD^*): P_\DD \to P_\DD$ taking $\frak a$ to $\frak a^*$ and fixing $\partial P_\DD$ pointwise. Thus, the {\em abstract open book associated to $\DD^*$} is the pair $(P_\DD,\phi_{\DD^*})$.
\end{definition}

\begin{theorem}[Castro-Gay-Pinzon~\cite{CasGayPin_Diagrams-for-relative_18}]
 For any relative diagram $\DD = (\Sigma;\alpha,\beta,\gamma)$ and any arc system $\frak a$ relative to $\alpha$ on $\Sigma$, there exist cut systems $\frak b$, $\frak c$ and $\frak a^*$ such that $\DD^* = (\Sigma;\alpha,\beta,\gamma;\frak a,\frak b,\frak c, \frak a^*)$ is a completed arced diagram. Furthermore, the abstract open book $(P_\DD,\phi_{\DD^*})$ is uniquely determined (up to isotopy) by the original relative diagram $\DD$ and in fact the model $3$--manifold $M(P_\DD,\phi_{\DD^*})$ is diffeomorphic, respecting open books, to $\partial X(\DD)$.
\end{theorem}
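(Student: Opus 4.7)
The plan is to establish three separate assertions: (i) given $\frak a$, one can construct $\frak b$, $\frak c$, and $\frak a^*$ making $\DD^* = (\Sigma;\alpha,\beta,\gamma;\frak a,\frak b,\frak c,\frak a^*)$ a completed arced diagram; (ii) the resulting abstract open book $(P_\DD,\phi_{\DD^*})$ is independent of these choices up to isotopy; and (iii) the model $3$--manifold $M(P_\DD,\phi_{\DD^*})$ is diffeomorphic to $\partial X(\DD)$ via an open-book-preserving diffeomorphism.

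For (i), I would produce $\frak b$, $\frak c$, and $\frak a^*$ sequentially, applying the same argument to each color pair. Focusing on $\frak b$: by hypothesis $(\Sigma,\alpha,\beta)$ is slide-diffeomorphic to the trivial $(g,k_1;p,b)$--diagram $(\Sigma',\alpha',\beta')$ shown in Figure~\ref{fig:Heeg_diags}. In that standard picture, the $\alpha'$ and $\beta'$ curves can be arranged to be parallel in a neighborhood disjoint from the genus $p$ region, so there is a standard arc system $\frak s$ (with the prescribed boundary points) that is simultaneously an arc system relative to $\alpha'$ and to $\beta'$. Pulling back the slide-diffeomorphism, the image $\frak a'$ of $\frak a$ and the reference system $\frak s$ are two arc systems relative to $\alpha'$ on the same surface with the same boundary, and any two such systems are slide-equivalent via slides over $\alpha'$. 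This produces the required slide-equivalence realizing the standardness condition, and defines $\frak b$ as the pullback of $\frak s$ viewed relative to $\beta'$. Apply the same construction to $(\beta,\gamma,\frak b)$ to obtain $\frak c$, and to $(\gamma,\alpha,\frak c)$ to obtain $\frak a^*$.

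For (iii), observe that the spokes $H_\alpha$, $H_\beta$, $H_\gamma$ of the trisection meet $\partial X(\DD)$ in three specific pages of the induced open book, naturally identified with the surgered surfaces $\Sigma_\alpha$, $\Sigma_\beta$, $\Sigma_\gamma$, each diffeomorphic to $P_\DD$. Since each piece $Y^0_i$ is a lensed product cobordism between consecutive pages, the monodromy of the induced open book is the composite of the three comparison maps between these pages. Now $\frak a$ pushes through $H_\alpha$ to an arc system on $\Sigma_\alpha$; the standardness condition producing $\frak a'=\frak b'$ in the trivial diagram means that $\frak a$ and $\frak b$ push across $H_\beta$ to the same arc system on the page in $\partial X \cap H_\beta$, so $\frak b$ on $\Sigma_\beta$ is the image of $\frak a$ on $\Sigma_\alpha$ under the comparison map through $Y^0_1$. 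Iterating, $\frak a^*$ on $\Sigma_\alpha$ is precisely the image of $\frak a$ under one full traversal of the induced open book, matching the definition of $\phi_{\DD^*}$ and yielding the desired open book-preserving diffeomorphism.

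For (ii), the key point is Alexander's theorem: cutting $P_\DD$ along any arc system yields a disk, so a self-diffeomorphism of $P_\DD$ fixing $\partial P_\DD$ pointwise is determined up to isotopy by its action on that arc system. Hence $\phi_{\DD^*}$ is determined up to isotopy by the pair $(\frak a,\frak a^*)$, and slide-equivalences of the completion data do not change this isotopy class. The realization in (iii) then identifies $\phi_{\DD^*}$ with the intrinsic monodromy of the open book on $\partial X(\DD)$ induced by the trisection, which is manifestly independent of the choice of $\frak a$. The main obstacle I anticipate is step (iii), namely verifying in local coordinates near the binding that the abstract combinatorial data of three arc systems on $\Sigma$ faithfully records the geometric monodromy on $\partial X(\DD)$; this will require a careful local model of the lensed compression bodies and of how the pages are glued across the trisection surface.
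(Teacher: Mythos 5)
The paper does not actually prove this theorem; it is cited as Theorem~5 of Castro--Gay--Pinz\'on-Caicedo~\cite{CasGayPin_Diagrams-for-relative_18}, and the canonicity Lemma at the start of Section~\ref{sec:gluing} explicitly defers to that reference. Your high-level strategy --- push arc systems across the lensed cobordisms, identify the net comparison map with the open-book monodromy, and then argue independence of choices --- does match the approach of that source, but there is a genuine error in your step (i).

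You write that ``the image $\frak a'$ of $\frak a$ and the reference system $\frak s$ are two arc systems relative to $\alpha'$ on the same surface with the same boundary, and any two such systems are slide-equivalent via slides over $\alpha'$.'' This is false. Sliding arcs over $\alpha'$ curves does not change the isotopy class of the induced arc system on the page $\Sigma_{\alpha'}$, and two arc systems on a bounded page can differ by, say, a boundary-parallel or core Dehn twist without being isotopic (in the annular case relevant to this paper, two spanning arcs differing by a core twist). More to the point, if your claim were true it would apply equally to $\frak a$ and $\frak a^*$ in the completed diagram and force $\phi_{\DD^*}$ to be trivial for \emph{every} relative trisection diagram --- the paper flags exactly this pitfall in the remark immediately preceding the theorem: ``one cannot in general assume $\frak a = \frak a^*$ in a completed diagram, since \ldots this would imply trivial monodromy.'' The correct existence step is weaker: after the slide-diffeomorphism, slide $\frak a'$ over the $\alpha'$ curves until it is disjoint from $\beta'$ (possible because $\alpha'$ and $\beta'$ are parallel in the trivial diagram), and then take $\frak b' := \frak a'$, without attempting to reach a prescribed target $\frak s$. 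Relatedly, your step (ii) is circular as written: the assertion that ``slide-equivalences of the completion data do not change this isotopy class'' of the pair $(\frak a,\frak a^*)$ is precisely the well-definedness one is trying to prove, so with step (i) corrected the independence really does rest on the geometric identification sketched in step (iii), whose remaining local verification you correctly flag as the substantive work.
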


In light of the fact that $\phi_{\DD^*}$ does not depend on the choice of arcs used to convert $\DD$ to $\DD^*$, we will henceforth write $\phi_\DD$, without loss of specificity.

\section{Gluing trisections along open books}\label{sec:gluing}

In this section, we carefully set up the machinery needed to intelligibly glue relative trisections together when their boundaries are equipped with induced diffeomorphic open book decompositions. The bulk of this section essentially restates gluing results from Section~2 of~\cite{CasOzb_Trisections-of-4--manifolds_17}, but the conclusion of this section is to look more carefully at the special case when we are gluing along lens spaces with annular open books. First we expand on the above theorem slightly.

\begin{lemma}
 Given an arced relative trisection diagram $\DD = (\Sigma;\alpha,\beta,\gamma;\frak a,\frak b,\frak c)$, let $X(\DD)$ be the associated trisected $4$--manifold with boundary. Let $(P_\DD,\phi_\DD)$ be the associated abstract open book and let $M(P_\DD,\phi_\DD)$ be the associated model open book. Then there is a canonical (up to isotopy) diffeomorphism
$$\Psi(\DD) \colon \partial X(\DD) \to M(P_\DD,\phi_\DD).$$
In particular, $\Psi(\DD)$ is uniquely determined up to isotopy by the diagram $\DD$, and does not depend on the choice of arcs but only on the underlying relative diagram.
\end{lemma}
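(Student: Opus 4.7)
The plan is to construct $\Psi(\DD)$ piece-by-piece using the natural decomposition of $\partial X(\DD)$ induced by the trisection. Setting $Q_i := X_i \cap \partial X(\DD)$, this expresses the boundary as three lensed product cobordisms $Q_i \cong Y_{g,l_i;p,b}^0 \cong \Sigma_{p,b} \times I/\!\sim$, glued along the three interface pages sitting inside the spokes $H_\alpha$, $H_\beta$, and $H_\gamma$. My goal is to identify each $Q_i$ canonically with a closed third of the model open book $M(P_\DD,\phi_\DD) = [0,1] \times P_\DD/\!\sim$, so that the three pieces assemble into a single well-defined diffeomorphism onto the mapping torus.

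First I would use the arc systems to fix canonical identifications of each interface page with the abstract page $P_\DD = \Sigma_\alpha$. The arcs $\frak a$ descend to an arc system on $\Sigma_\alpha$, directly identifying the $H_\alpha$-interface page with $P_\DD$. The pairwise standardness conditions satisfied by $(\Sigma,\alpha,\beta,\frak a,\frak b)$ and $(\Sigma,\beta,\gamma,\frak b,\frak c)$ then produce preferred diffeomorphisms from the $H_\beta$- and $H_\gamma$-interface pages to $P_\DD$, up to isotopy rel boundary. Next I would extend each page identification across the corresponding lensed cobordism $Q_i$, using that a diffeomorphism of a lensed product which is the identity on both end pages and on the collapsed vertical boundary is isotopic to the identity rel boundary (a relative Alexander-type argument in a product).

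Assembling these three identifications produces $\Psi(\DD)\colon \partial X(\DD) \to M(P_\DD,\phi_\DD)$: on two of the three interface pages the identifications agree on the nose, while on the third (where one returns to an $H_\alpha$-interface page via the completion $\frak a^*$) the change-of-identification is precisely the diffeomorphism taking $\frak a$ to $\frak a^*$, which is $\phi_\DD$ by definition. That $\Psi(\DD)$ is independent of the choice of $\frak a^*$ follows from the cited Castro-Gay-Pinzon theorem, which provides uniqueness of $\phi_\DD$ up to isotopy rel boundary: different completions yield candidate diffeomorphisms differing by an isotopy of the target $M(P_\DD,\phi_\DD)$.

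The main obstacle is establishing the stronger conclusion that $\Psi(\DD)$ depends only on the underlying relative trisection diagram, not on the chosen arc systems. For this I would argue that any alternative arc system $(\frak a',\frak b',\frak c')$ on the same relative diagram can be converted to $(\frak a,\frak b,\frak c)$ by a sequence of arc-handleslides and ambient isotopies rel $\partial \Sigma$, so one can exhibit an isotopy of $\partial X(\DD)$ carrying one choice of constructed identification to the other. A secondary subtlety is maintaining consistent orientation conventions across the three interface pages, which I would fix at the outset by orienting $\Sigma$ as the boundary of each spoke in the standard cyclic order $\alpha \to \beta \to \gamma \to \alpha$, so that the three thirds of the mapping torus fit together in the correct cyclic sense.
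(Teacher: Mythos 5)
Your piecewise construction is in the right spirit (and is essentially what underlies Theorem~5 of Castro--Gay--Pinz\'on, which the paper simply cites for the existence of some diffeomorphism $\Psi(\DD)$), but the argument for the canonicity claim --- that $\Psi(\DD)$ is independent of the choice of arcs --- has a genuine gap, and this is precisely the part of the lemma that is new.

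Your proposed route is to show that any two arc systems $(\frak a,\frak b,\frak c)$ and $(\frak a',\frak b',\frak c')$ on the same relative diagram are related by arc-handleslides and isotopies rel $\partial\Sigma$, and then track an induced isotopy of $\partial X(\DD)$. But two arc systems $\frak a$ and $\frak a'$ relative to the same cut system $\alpha$ are \emph{not} in general slide-equivalent: viewed on the page $\Sigma_\alpha$ they can differ by a nontrivial mapping class (e.g.\ powers of boundary-parallel Dehn twists), which is exactly the phenomenon that makes the monodromy well-defined. So the sequence of moves you invoke need not exist, and the argument does not close. (It is also worth noting that you only sketch this step --- ``I would argue that\dots'' --- rather than carrying it out, and this is where the real content lies.)

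The paper sidesteps all of this with a short observation that your construction does not exploit: the surgered surface $\Sigma_\alpha = P_\DD$ naturally sits inside $\partial X(\DD)$ as one end of the $\alpha$-compression body, and this inclusion is determined by the relative diagram alone, with no reference to arcs. Any diffeomorphism $\partial X(\DD) \to M(P_\DD,\phi_\DD)$ respecting the open book structures and restricting to this canonical identification on that one page is then unique up to isotopy, because a self-diffeomorphism of a closed $3$--manifold which fixes a page of a fixed open book decomposition is isotopic to the identity. This single remark delivers both canonicity and arc-independence at once, without ever needing to compare two piecewise-built maps. If you want to salvage your constructive approach, you should replace the arc-handleslide argument with this page-fixing observation: once your assembled map is checked to restrict to the natural identification on $\Sigma_\alpha$, uniqueness up to isotopy is automatic.
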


\begin{proof}
 This is already proved as Theorem~5 in~\cite{CasGayPin_Diagrams-for-relative_18}, only that the canonicity of the diffeomorphism is not emphasized there. Once one understands that such a diffeomorphism $\Psi(\DD)$ exists, observe simply that $P_\DD = \Sigma_\alpha$ naturally sits as a submanifold of $\partial X(\DD)$, as one end of the $\alpha$ compression body. We also identify $P_\DD$ naturally with $\{0\} \times P_\DD \subset M(P_\DD,\phi_\DD)$. Then we note that any self-diffeomorphism of a $3$--manifold which is the identity on a fixed page of a fixed open book decomposition is necessarily isotopic to the identity.
\end{proof}

\begin{definition}
	Two arced relative trisection diagrams
	$$\DD = (\Sigma;\alpha,\beta,\gamma;\frak a,\frak b,\frak c) \text{\ \ \ and \ \ \ }\DD' = (\Sigma';\alpha',\beta',\gamma';\frak a',\frak b',\frak c'),$$
	together with an orientation reversing diffeomorphism $f\colon\partial(\Sigma,\frak a)\to\partial(\Sigma',\frak a')$,
	are called \emph{gluing compatible} if there is an orientation reversing diffeomorphism
	$$\psi_f(\DD,\DD') \colon P_\DD = \Sigma_\alpha \to P_{\DD'} = \Sigma'_{\alpha'}$$
	that extends $f$, takes $\frak a$ to $\frak a'$, and commutes with the monodromy diffeomorphisms $\phi_\DD$ and $\phi_{\DD'}$. 
	Note that if such a diffeomorphism exists, it is uniquely determined up to isotopy because the arc systems $\frak a$ and $\frak a'$, respectively, cut $P_\DD$ and $P_{\DD'}$, respectively, into disks. We call the diffeomorphism $f$ a \emph{compatible gluing}.
\end{definition}

The compatible gluing $f$ will be implicit in our diagrams in what follows, so we will suppress it from the notation, writing $\psi(\DD,\DD')$ for $\psi_f(\DD,\DD')$. In light of this set-up, the following is Proposition~2.12 of~\cite{CasOzb_Trisections-of-4--manifolds_17}.

\begin{proposition}
\label{prop:glue}
	Given two gluing compatible arced diagrams $\DD$ and $\DD'$, consider the diffeomorphism
	$$\Psi(\DD,\DD')\colon M(P_\DD,\phi_\DD) \to M(P_{\DD'},\phi_{\DD''})$$
	defined by sending each $\{t\} \times P_\DD$ to $\{t\} \times P_{\DD'}$ via $\psi(\DD,\DD')$, and the associated diffeomorphism
	$$\Upsilon(\DD,\DD') \colon \partial X(\DD) \to \partial X(\DD')$$
	defined by $\Upsilon(\DD,\DD')= \Psi(\DD')^{-1} \circ \Psi(\DD,\DD') \circ \Psi(\DD)$. Also consider the closed Heegaard triple $\DD \cup \DD'$ obtained by gluing $\Sigma$ to $\Sigma'$ so as to match corresponding end points of arcs. Then this is a trisection diagram and the corresponding closed $4$--manifold is the $4$--manifold $X(\DD\cup\DD')$ built by gluing $X(\DD)$ to $X(\DD')$ using the orientation reversing diffeomorphism $\Upsilon(\DD,\DD')^{-1} = \Upsilon(\DD',\DD)$.
\end{proposition}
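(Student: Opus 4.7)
The strategy is to verify two things in turn: that $\DD\cup\DD'$ is a genuine closed trisection diagram, and that the closed $4$--manifold it determines agrees with the topological gluing $X(\DD)\cup_{\Upsilon(\DD',\DD)}X(\DD')$.

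For the first item, I would leverage the pairwise standardness conditions in the definition of an arced diagram, extending them as needed to the $(\gamma,\alpha)$--pair by invoking the existence of compatible completions $\frak a^*$ and $\frak a'^*$. For the $(\alpha,\beta)$--pair, one handleslides on each side so that $\frak a=\frak b$ and the underlying cut systems are trivial, and likewise on $\DD'$. Since $f$ identifies the endpoints of $\frak a$ with those of $\frak a'$, after gluing the arcs close up into simple closed curves which, together with the standardized cut systems, assemble into the trivial closed Heegaard diagram of some $\#^l(S^2\times S^1)$. Applying the analogous argument to the other two pairs shows that $\DD\cup\DD'$ is a closed trisection diagram with the expected parameters.

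For the second item, I would use the canonical boundary diffeomorphisms $\Psi(\DD)$ and $\Psi(\DD')$ from the preceding lemma to identify $\partial X(\DD)$ and $\partial X(\DD')$ with their respective model open books. Compatibility of $\DD$ and $\DD'$ ensures that $\psi(\DD,\DD')$ extends $f$ and intertwines the monodromies, so $\Psi(\DD,\DD')$ is a well-defined orientation-reversing diffeomorphism of the model open books, and hence $\Upsilon(\DD,\DD')$ is a well-defined diffeomorphism of the boundaries. After performing the geometric gluing, the trisection surfaces $\Sigma$ and $\Sigma'$ meet across the common boundary (via $f$) to form the closed surface of $\DD\cup\DD'$; the six spokes pair off into three closed handlebodies containing this surface; and the three pairs of sectors glue into three $4$--dimensional $1$--handlebodies of the appropriate genera. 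This is precisely the trisected structure that the closed diagram $\DD\cup\DD'$ represents by definition.

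The main obstacle is the bookkeeping needed to confirm that the diagrammatic identification along arc endpoints corresponds precisely to the geometric identification $\Upsilon(\DD',\DD)$. Specifically, one must check that the points where arcs of $\DD$ meet $\partial\Sigma$ trace out, under the gluing, exactly the arcs of the open book pages that $\psi(\DD,\DD')$ identifies with those of $\DD'$, and that the interior arcs $\frak a,\frak b,\frak c$ are carried to $\frak a',\frak b',\frak c'$ in a compatible way. Most of this is carried out in the proof of Proposition~2.12 of~\cite{CasOzb_Trisections-of-4--manifolds_17}; the remaining work is to unwind the notation and confirm that the three arc systems introduce no constraints beyond those already handled there.
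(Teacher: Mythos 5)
The paper does not supply its own proof of this proposition; it simply identifies the statement as Proposition~2.12 of~\cite{CasOzb_Trisections-of-4--manifolds_17} and defers entirely to that reference. Your sketch correctly reconstructs the two things that such a proof must establish (that $\DD \cup \DD'$ is a closed trisection diagram, and that the resulting closed $4$--manifold realizes the gluing by $\Upsilon(\DD',\DD)$), identifies the role of the pairwise-standardness conditions and the completion $\frak a^*$ for the $(\gamma,\alpha)$--pair, and --- like the paper --- ultimately delegates the detailed bookkeeping to the cited Castro--Ozbagci argument, so the two treatments are in essential agreement.
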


Note that, because our definition of arced diagram requires that the arcs $\frak a$, $\frak b$, and $\frak c$ all share common endpoints, the resulting closed diagram will have triple points of intersection among the curves of $\alpha$, $\beta$, and $\gamma$ at the gluing points. These can of course be perturbed away after the gluing. In particular, when the page of our open book is an annulus, we can consistently displace the end points of $\frak a$, $\frak b$, and $\frak c$ following a counterclockwise orientation along one boundary component and a clockwise orientation on the other, {\em before} the gluing, so that we immediately avoid the triple points. This is what we will do in later sections, and in all figures.

\begin{lemma}
\label{lem:composition}
 Given three gluing compatible diagrams $\DD$, $\DD'$ and $\DD''$, the three diffeomorphisms $\Upsilon(\DD,\DD')$, $\Upsilon(\DD',\DD'')$ and $\Upsilon(\DD,\DD'')$ satisfy the composition rule:
 $$\Upsilon(\DD,\DD'') = \Upsilon(\DD',\DD'') \circ \Upsilon(\DD,\DD').$$
\end{lemma}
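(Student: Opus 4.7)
The plan is to unwind the definition $\Upsilon(\DD,\DD') = \Psi(\DD')^{-1} \circ \Psi(\DD,\DD') \circ \Psi(\DD)$ given in Proposition~\ref{prop:glue} and then reduce the desired composition rule to a statement about page diffeomorphisms, which we can settle via the uniqueness of $\psi(\DD,\DD')$.

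First I would compute directly:
\begin{align*}
\Upsilon(\DD',\DD'') \circ \Upsilon(\DD,\DD')
&= \bigl[\Psi(\DD'')^{-1} \circ \Psi(\DD',\DD'') \circ \Psi(\DD')\bigr] \circ \bigl[\Psi(\DD')^{-1} \circ \Psi(\DD,\DD') \circ \Psi(\DD)\bigr] \\
&= \Psi(\DD'')^{-1} \circ \bigl(\Psi(\DD',\DD'') \circ \Psi(\DD,\DD')\bigr) \circ \Psi(\DD),
\end{align*}
so that the composition rule for $\Upsilon$ is equivalent to the equality (up to isotopy through open-book preserving diffeomorphisms)
$$\Psi(\DD',\DD'') \circ \Psi(\DD,\DD') \;=\; \Psi(\DD,\DD'') \colon M(P_\DD,\phi_\DD) \to M(P_{\DD''},\phi_{\DD''}).$$
Since each $\Psi(\cdot,\cdot)$ is by construction just the ``product'' of a page diffeomorphism $\psi(\cdot,\cdot)$ with the identity on the $[0,1]$ factor (modulo the monodromy identification), this reduces in turn to the page-level statement
$$\psi(\DD',\DD'') \circ \psi(\DD,\DD') \;=\; \psi(\DD,\DD'') \colon P_\DD \to P_{\DD''},$$
up to isotopy rel.\ boundary.

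Next I would verify that the composite $\psi(\DD',\DD'') \circ \psi(\DD,\DD')$ satisfies the three defining properties of $\psi(\DD,\DD'')$: it extends the composite compatible gluing $f'' \circ f \colon \partial \Sigma \to \partial \Sigma''$ (which is implicit in the hypothesis that all three diagrams are mutually gluing compatible); it carries the arc system $\frak a$ first to $\frak a'$ and then to $\frak a''$; and it intertwines $\phi_\DD$ with $\phi_{\DD''}$ because each factor intertwines consecutive monodromies. Each of these three points is immediate from the analogous property for the individual factors.

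Finally, I would invoke the uniqueness clause built into the definition of gluing compatibility: once one orientation-reversing diffeomorphism $P_\DD \to P_{\DD''}$ extends the prescribed boundary gluing, sends $\frak a$ to $\frak a''$ pointwise-matching endpoints, and conjugates $\phi_\DD$ to $\phi_{\DD''}$, it is determined up to isotopy rel.\ boundary because the arc systems $\frak a$ and $\frak a''$ cut $P_\DD$ and $P_{\DD''}$ into disks. Thus $\psi(\DD',\DD'') \circ \psi(\DD,\DD')$ and $\psi(\DD,\DD'')$ are isotopic rel.\ boundary, and the composition rule for $\Upsilon$ follows. The main obstacle—such as it is—is merely checking that the three defining properties of $\psi$ genuinely compose; the heavy lifting has already been done in packaging $\psi$ as a \emph{uniquely determined} object, so no real topology needs to be done beyond this bookkeeping.
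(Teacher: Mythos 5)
Your unwinding faithfully expands the paper's one-line proof (``Simply follow the definitions of all the maps involved''): substituting $\Upsilon(\DD,\DD') = \Psi(\DD')^{-1}\circ\Psi(\DD,\DD')\circ\Psi(\DD)$, cancelling the inner $\Psi(\DD')\circ\Psi(\DD')^{-1}$, and reducing to the page-level identity $\psi(\DD',\DD'')\circ\psi(\DD,\DD') \simeq \psi(\DD,\DD'')$ rel.\ boundary is exactly what the paper has in mind, and the reduction is mechanically correct.

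However, the final step does not close as written. The definition of gluing compatibility requires each $\psi(\cdot,\cdot)$ to be an \emph{orientation-reversing} diffeomorphism of pages, so the composite $\psi(\DD',\DD'')\circ\psi(\DD,\DD')$ is orientation-\emph{preserving}, while $\psi(\DD,\DD'')$ is orientation-reversing; two diffeomorphisms with opposite orientation behavior cannot be isotopic. The uniqueness clause you invoke characterizes only the orientation-reversing diffeomorphism meeting the listed conditions, so it does not apply to the composite. For the same reason, the implicit gluings cannot satisfy the cocycle relation $f_{\DD\DD''}=f_{\DD'\DD''}\circ f_{\DD\DD'}$ that your argument relies on, since that composite is orientation-preserving on the binding while each $f$ must be orientation-reversing. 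This sign mismatch is already present in the lemma statement itself ($\Upsilon(\DD,\DD')$ and $\Upsilon(\DD',\DD'')$ are each orientation-reversing, so their composite cannot literally equal the orientation-reversing $\Upsilon(\DD,\DD'')$), so you are inheriting an imprecision of the source rather than introducing a new error; still, your proof should not silently certify the composite as a candidate for the uniqueness clause. A careful write-up would flag the orientation discrepancy and either restate the composition rule (e.g.\ with an inverse or a mirror on one of the terms) or explain why it is harmless in the annular case, which is the only setting in which the paper invokes the lemma.
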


\begin{proof}
 Simply follow the definitions of all the maps involved.
\end{proof}

\subsection{Relative trisections inducing annular open book decompositions}
\label{subsec:annular}

We now focus our attention on a very special case, when the pages of the open book decompositions being considered are annuli, in which case the $3$--manifolds involved are lens spaces $L(p,1)$.

\begin{definition}
 A(n arced) relative trisection diagram $\DD$ is {\em annular} if the page $P_\DD = \Sigma_\alpha$ is an annulus. We say that $\DD$ is {\em $p$--annular} if the $3$--manifold $M(P_\DD,\phi_\DD)$ is diffeomorphic to the lens space $L(p,1)$. Equivalently, $\DD$ is $p$--annular if the page is an annulus and the monodromy is $\tau_C^p$ where $\tau_C$ is a Dehn twist around the core $C$ of the annulus.
\end{definition}

\begin{lemma}
\label{lem:compatible}
 Given a fixed $p$, any two $p$--annular arced relative diagrams $\DD$ and $\DD'$ are gluing compatible, and the associated gluing map $\Upsilon(\DD,\DD')$ is well-defined (up to isotopy) and {\em independent} of the choice of arcs extending the relative diagrams to arced relative diagrams.
\end{lemma}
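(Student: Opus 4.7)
The plan is two-fold: first, directly construct a gluing-compatible orientation-reversing diffeomorphism $\psi_f$ between the annular pages; second, show that replacing the arcs with other admissible choices alters $\psi_f$ only by powers of the Dehn twist $\tau_C$ whose extensions to the boundary lens space are isotopic to the identity.

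For the first part, both pages are annuli, both monodromies equal $\tau_C^p$, and each arc system consists of a single properly embedded arc between the two boundary circles. The boundary gluing $f$ specifies an orientation-reversing identification of those circles matching the endpoints of the arcs. Any orientation-reversing extension of $f$ to a diffeomorphism of annuli is unique up to isotopy rel boundary modulo post-composition by a power of $\tau_C$; by choosing this power so that $\psi_f(\frak a) = \frak a'$, we obtain the desired diffeomorphism. Monodromy compatibility is then automatic, since both monodromies lie in the cyclic subgroup generated by $\tau_C$, and the sign arising from the orientation-reversing nature of $\psi_f$ (which conjugates $\tau_C$ to $\tau_C^{-1}$) is precisely what is needed to make the gluing map $\Psi(\DD,\DD')$ well-defined between the two oriented open books for $L(p,1)$.

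For the second part, any two properly embedded arcs on an annulus with the same endpoints connecting the two boundary components differ by a power of $\tau_C$. Replacing $\frak a$ by $\tau_C^j(\frak a)$ and $\frak a'$ by $\tau_C^k(\frak a')$ therefore modifies $\psi_f$ to $\tau_C^k \circ \psi_f \circ \tau_C^{-j}$. Since the canonical diffeomorphisms $\Psi(\DD)$ and $\Psi(\DD')$ from the earlier lemma do not depend on the arc choices, the effect on $\Upsilon(\DD,\DD')$ is conjugation by self-diffeomorphisms of $\partial X(\DD) \cong \partial X(\DD') \cong L(p,1)$ induced by the extensions of $\tau_C^{\pm j}$ and $\tau_C^{\pm k}$ across the annular open book. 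The key geometric fact is that the extension of $\tau_C$ across this open book is isotopic to the identity in $L(p,1)$: in the Heegaard splitting induced by the annular open book, the core $C$ of the page is a longitude of one solid torus and, via the $L(p,1)$ gluing, a meridian of the other, so $\tau_C$ extends as a disk twist across that solid torus and thereby to a self-diffeomorphism of $L(p,1)$ isotopic to the identity. Hence $\Upsilon(\DD,\DD')$ is unchanged up to isotopy.

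The principal obstacle is the geometric identification of $C$ as a meridian of one of the two solid tori in the Heegaard splitting of $L(p,1)$ induced by the annular open book; once this is unpacked from the standard correspondence between annular open books and Heegaard splittings of lens spaces, the arc-independence follows (since a disk twist is isotopic to the identity), and the remaining compatibility checks reduce to formal manipulations with the cyclic mapping class group of the annulus.
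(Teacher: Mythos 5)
Your proof of the gluing-compatibility half of the lemma runs along essentially the same lines as the paper's: use the fact that the mapping class group of the annulus is $\mathbb{Z}$, generated by $\tau_C$, to normalize an orientation-reversing extension of the boundary gluing so that it matches the arcs, with monodromy compatibility then following from the cyclic structure. That part is fine.

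The second half contains a genuine geometric error. You reduce, correctly, to showing that the self-diffeomorphism of $L(p,1)$ obtained by applying $\tau_C$ to every page of the annular open book is isotopic to the identity. But your justification --- that the core $C$ of the page is a meridian of one of the two solid tori in the induced Heegaard splitting, so that $\tau_C$ extends as a disk twist --- is false. The core $C$ is parallel, within the annular page, to the two boundary components of that page, hence to the binding components of the open book. In the genus-one Heegaard splitting that an annular open book induces, the meridian of the first solid torus is a union $a_0 \cup a_{1/2}$ of two copies of a spanning arc $a$ of the annulus on the two half-pages, and the meridian of the second solid torus is $a_{1/2} \cup \phi(a)_0$. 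Each of these meets $C$ in exactly one point, so $C$ is a longitude of \emph{both} solid tori and is a meridian of neither; indeed $C$ generates $H_1(L(p,1)) \cong \mathbb{Z}/p$ and so bounds a disk only when $p = 1$. This is already visible for $p = 0$: there $C$ is isotopic to $\{pt\} \times S^1 \subset S^2 \times S^1$, which generates $\pi_1$. So $\tau_C$ is not a disk twist, and that route is closed.

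The argument that does work, and which the paper gives, is the standard open-book fact that applying a boundary-parallel Dehn twist to every page of an open book decomposition of a $3$--manifold produces a map isotopic to the identity, via an isotopy that rotates the corresponding binding component. Since $\tau_C$ is boundary-parallel on the annulus, this applies directly, and since $\Psi(\DD)$ and $\Psi(\DD')$ are already known to be arc-independent, the map $\Psi(\DD_2,\DD'_2)^{-1}\circ\Psi(\DD_1,\DD'_1)$ is exactly such a page-preserving boundary-twist map, hence isotopic to the identity. Note also that the resulting arc-independence is genuinely special to pages whose mapping class group is generated by boundary Dehn twists (annulus and pair of pants); the ``shrink the twisting function to zero'' type of isotopy one might be tempted to use instead is not a well-defined isotopy of the closed manifold, and the binding-rotation argument is the right one.
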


\begin{proof}
 The fact that $\DD$ and $\DD'$ are gluing compatible is because the model manifold $M(P,\phi)$ for an abstract open book $(P,\phi)$, where $P$ is an annulus, is diffeomorphic to $L(p,1)$ if and only if $\phi$ is $\tau_C^p$, and since the mapping class group of the annulus is $\mathbb{Z}$, generated by $\tau_C$, any orientation reversing diffeomorphism from $P_\DD$ to $P_{\DD'}$  commutes with monodromies $\phi_\DD$ and $\phi_{\DD'}$ (up to isotopy rel. boundary), and hence there is such a diffeomorphism $\psi$ taking $\frak a$ to $\frak a'$.
 
 Now suppose $\DD_1$ and $\DD_2$ are different arced diagrams extending the same underlying relative diagram, and similarly that $\DD'_1$ and $\DD'_2$ are different arced diagrams extending the same underlying relative diagram. Thus $P_{\DD_1} = P_{\DD_2}$ and $P_{\DD'_1}=P_{\DD'_2}$. The two gluing maps we wish to compare are
 $$\Upsilon(\DD_1,\DD'_1) = \Psi(\DD'_1) \circ \Psi(\DD_1,\DD'_1) \circ \Psi(\DD_1)$$
 and
 $$\Upsilon(\DD_2,\DD'_2) = \Psi(\DD'_2) \circ \Psi(\DD_2,\DD'_2) \circ \Psi(\DD_2).$$
 We will show that $\Upsilon(\DD_2,\DD'_2)^{-1} \circ \Upsilon(\DD_1,\DD'_1)$ is isotopic to the identity. Since we know that the boundary parametrizations $\Psi(\DD_1)$, $\Psi(\DD_2)$, $\Psi(\DD'_1)$, and $\Psi(\DD'_2)$ are well-defined up to isotopy only by the underlying relative diagrams, i.e. are independent of choice of arcs, we need only show that $\Psi(\DD_2,\DD'_2)^{-1} \circ \Psi(\DD_1,\DD'_1)$ is isotopic to the identity. But this is just the map from $M(P_{\DD_1},\phi_{\DD_1})$ to $M(P_{\DD_2},\phi_{\DD_2}) = M(P_{\DD_1},\phi_{\DD_1})$ defined by a map from $P_{\DD_1}=P_{\DD_2}$ to itself which maps arc $\frak a_1$ to $\frak a_2$. In other words, this maps pages to pages and is the identity on the binding. Thus, up to isotopy, this self-map of an open book is determined by its effect on one page, and on that one page it acts by some power of a boundary parallel Dehn twist. Such a self-map of an open book is isotopic to the identity via an isotopy which rotates the corresponding binding component as many times as needed to undo the Dehn twists.
\end{proof}

Note that we have in fact proved the independence of choice of arcs whenever the page has mapping class group generated by boundary Dehn twists, but this is true only in one case other than the annulus, namely the case of a pair of pants.

\section{Bridge trisections of 2--knots}\label{sec:bridge}

The concept of a trisection can be extended to the setting of knotted surfaces in 4--manifolds.  This was first carried out in~\cite{MeiZup_Bridge-trisections_17} for surfaces in $S^4$, before being extended to the general setting in~\cite{MeiZup_Bridge-trisections_}, where we refer the reader for general details.  In the present note, we will restrict our attention to a special type of generalized bridge trisection: namely, 1--bridge trisections of 2--knots. Recall that a \emph{2--knot} is a smoothly embedded 2--sphere $\Kk$ in a smooth, orientable, connected, closed 4--manifold $X$, which we will alternately denote by $(X,\Kk)$ or simply $\Kk$, when the ambient space is clear from context.

A smooth disk $D$ that is properly embedded in $Z_k = \natural^k(B^3\times S^1)$ is called \emph{trivial} if there is an isotopy of $D$ that fixes $\partial D$ point-wise and pushes $D$ into $\partial Z_k$.  It follows that $U = \partial D$ is an unknot in $Y_k = \partial Z_k = \#^k(S^2\times S^1)$.

\begin{definition}
	A \emph{1--bridge $(g;k_1,k_2,k_3)$--trisection} of a 2--knot $(X,\Kk)$ is a decomposition
	$$(X,\Kk) = (X_1,D_1)\cup (X_2,D_2)\cup (X_3,D_3),$$
	where
	\begin{enumerate}
		\item $X=X_1\cup X_2\cup X_3$ is a $(g;k_1,k_2,k_3)$--trisection,
		\item $D_i$ is a trivial disk in $X_i$, and
		\item $\Kk\cap H_i$ is a properly embedded arc.
	\end{enumerate}
\end{definition}

The following is the restriction of Theorem~1.2 of~\cite{MeiZup_Bridge-trisections_} to the setting of a 2--knot $\Kk$ in a closed 4--manifold $X$.

\begin{theorem}[\cite{MeiZup_Bridge-trisections_}]
	Every pair $(X,\Kk)$ admits a 1--bridge trisection.
\end{theorem}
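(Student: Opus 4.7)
The plan is to reduce this to Meier-Zupan's general bridge trisection existence theorem (Theorem~1.2 of~\cite{MeiZup_Bridge-trisections_}) and verify that, for a sphere, the minimum achievable bridge vector is $(1,1,1)$. The Morse-theoretic picture driving the construction is to realize $\Kk$ with a height function having exactly two critical points, one minimum and one maximum, which will land on the trisection surface $\Sigma$ as the two bridge points.

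First I would invoke the Gay-Kirby existence theorem to select any trisection $X = X_1 \cup X_2 \cup X_3$ of $X$ and put $\Kk$ into generic position with respect to the spine. After a small isotopy, $\Kk$ meets $\Sigma$ transversely in a finite set of points (the prospective bridge points), meets each spine handlebody $H_\alpha$, $H_\beta$, $H_\gamma$ in a disjoint union of properly embedded arcs, and meets each sector $X_i \cong Z_{k_i}$ in a properly embedded surface. The next step is to isotope within each sector so that $\Kk \cap X_i$ is a disjoint union of trivial disks. This uses the fact that in the 4-dimensional 1-handlebody $Z_{k_i}$, a properly embedded disk whose boundary is an unknot in $\#^{k_i}(S^2 \times S^1)$ and which is null-homotopic in $Z_{k_i}$ can be isotoped into a collar of the boundary after handleslides, possibly following a stabilization of the ambient trisection. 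This produces a generalized bridge trisection of $(X,\Kk)$ with some bridge vector $(b_1,b_2,b_3)$.

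The core of the argument, and the step I expect to be the main obstacle, is reducing from this bridge vector down to $(1,1,1)$. In general, the local move \emph{inverse perturbation} (which combines two trivial disks across a handlebody into one) is obstructed for knotted surfaces of positive genus. For a 2-sphere, however, the constraint $\chi(\Kk) = 2$ is highly restrictive: counting bridge points, arcs in the three handlebodies, and disks in the three sectors produces an Euler characteristic identity that leaves little room for redundant bridges once standard simplifications have been carried out. An induction on $b_1+b_2+b_3$, showing that any non-minimal bridge trisection of a sphere contains a pair of disks in some sector whose boundary arcs cobound an innermost bigon enabling an inverse perturbation, would complete the reduction.

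Alternatively, one can bypass the unperturbation subtlety entirely by building the 1-bridge trisection directly. Start with an unknotted 2-sphere in a standard 4-ball $B^4 \subset X$, which evidently admits a 1-bridge trisection inside a genus-one trisected neighborhood, and then realize $\Kk$ as the result of attaching bands (dual to a banded unlink diagram for $\Kk$) to that unknot. Absorbing each band by a trisection stabilization, carefully arranging the stabilization arc to cross the band locally in the appropriate sector, preserves the 1-bridge property while modifying the ambient trisection. I would expect this constructive route to be technically cleaner than the reduction argument, since it avoids any perturbation-move bookkeeping and produces the trisection and its doubly pointed diagram simultaneously.
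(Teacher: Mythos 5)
Note first that the paper itself does not prove this statement: it cites Theorem~1.2 (and Corollary~1.3) of~\cite{MeiZup_Bridge-trisections_}, so there is no in-paper argument to compare against. Your proposal is a reconstruction, and both branches have gaps. In the first branch, the Euler characteristic identity $c_1+c_2+c_3-b=\chi(\Kk)=2$ is correct and confirms that a $1$--bridge trisection of a sphere would have to have $c_1=c_2=c_3=1$, but it is only a consistency check, not a mechanism for producing one. The asserted inductive step --- that every non-minimal bridge trisection of a sphere contains an innermost bigon enabling an inverse perturbation --- is exactly the content that needs a proof, and it is not clearly true: in the analogous settings of bridge splittings of knots in $3$--manifolds and of higher-bridge trisections of surfaces, one generally cannot simplify without first perturbing up, so the monotone induction on $b_1+b_2+b_3$ need not terminate at $(1,1,1)$ along the route you describe.

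The second branch is closer in spirit to what is actually done but misstates the key step. ``Realizing $\Kk$ by attaching bands to an unknotted $2$--sphere'' is not a well-defined operation on embedded spheres, since attaching a band to a $2$--sphere changes its genus. The correct move is to put $\Kk$ itself in Morse position, with one minimum and one maximum lying on the spine of a trisection-to-be, and to absorb each saddle of the height function on $\Kk$ by a local stabilization of the ambient trisection near that saddle, so that it is the trisection --- not the surface --- that is modified. With that correction the outline becomes a legitimate construction: the trisection is built around the Morse function on $\Kk$, the two extrema become the two bridge points, and each sector meets $\Kk$ in a single trivial disk by construction, with no subsequent simplification step required.
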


The notion of $i$--stabilization discussed above yields a natural notion of stabilization for 1--bridge trisections:  Simply perform an $i$--stabilization of the underlying 4--manifold trisection away from the 2--knot $\Kk$.  Thus, we adopt the terminology of Definition~\ref{def:stab} with respect to a 1--bridge trisection $\Tt$ for $(X,\Kk)$.  In Subsection~\ref{subsec:exteriors}, we prove the following.

\begin{reptheorem}{thm:uniqueness}
	Any two 1--bridge trisections of a given 2--knot have a common stabilization.
\end{reptheorem}

It is not difficult to extend the notion of 1--bridge trisections to the setting of knotted spheres in compact 4--manifolds with boundary, equipped with relative trisections.  We will make use of this generalization in Subsection~\ref{subsec:core}.  However, we will generally be interested in 1--bridge trisections of knotted spheres in closed 4--manifolds -- i.e., 2--knots.

We now describe how 1--bridge trisections can be encoded diagrammatically.  First, we recall a familiar notion from 3--manifold topology.

\begin{definition}
	A \emph{doubly pointed Heegaard diagram} is a tuple $(\Sigma;\alpha,\beta, \bold x_+,\bold x_-)$ where $\Sigma$ is a closed surface, each of $\alpha$ and $\beta$ is a cut system for $\Sigma$, and $\bold x_+$ and $\bold x_-$ are points in $\Sigma\setminus\nu(\alpha\cup\beta)$.  We refer to $(\Sigma;\alpha,\bold x_+,\bold x_-)$ as a \emph{doubly pointed cut system}.
\end{definition}

The notion of slide-diffeomorphism introduced for cut systems and Heegaard tuples in Definition~\ref{def:cut} extends to the setting of doubly pointed cut systems if we require that handleslides are performed along arcs that are disjoint from the double points, isotopies of cut systems are supported away from the double points, and diffeomorphisms preserve the double points and their $(\pm)$--labeling.  For emphasis, we will sometimes refer to this equivalence relation as \emph{pointed-slide-diffeomorphism}.

\begin{definition}
	A \emph{doubly pointed trisection diagram} is a tuple $(\Sigma;\alpha,\beta,\gamma, \bold x_+,\bold x_-)$ where $\Sigma$ is a closed surface; each of $\alpha$, $\beta$, and $\gamma$ is a cut system for $\Sigma$; and each of $(\Sigma;\alpha,\beta,\bold x_+,\bold x_-)$, $(\Sigma;\beta,\gamma,\bold x_+,\bold x_-)$, and $(\Sigma;\gamma,\alpha,\bold x_+,\bold x_-)$ is pointed-slide-diffeomorphic to the doubly pointed Heegaard diagram shown as the top graphic of Figure~\ref{fig:Heeg_diags}, which we continue to refer to as the \emph{trivial $(g,k)$--diagram}, with the understanding that the double points are now included.
\end{definition}

A doubly pointed Heegaard diagram encodes a pair $(Y,K)$, where $Y$ is the closed 3--manifold described by the Heegaard diagram, and $K$ is a knot in $Y$ that intersects the Heegaard surface in the points $\bold x_\pm$ and intersects each of the handlebodies in single, boundary-parallel arc contained in the 0--handle of the handlebody.  For example, the trivial $(g,k)$--diagram encodes the pair $(Y_k,U)$, where $U$ is the unknot.

The requirement that each pair of doubly pointed cut systems in a doubly pointed trisection diagram be slide-diffeomorphic to a trivial diagram reflects the fact that, in a 1--bridge trisection, the 2--knot $\Kk$ intersects $X_i$ in a trivial disk $D_i$; i.e., $\partial D_i = U$ in $Y_k$.

\begin{proposition}\label{prop:realizing_2-knot}
	 A doubly pointed trisection diagram uniquely determines a 2--knot $(X,\Kk)$ with a 1--bridge trisection.
\end{proposition}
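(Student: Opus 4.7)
The plan is to build the pair $(X,\Kk)$ and its $1$--bridge trisection layer by layer from the diagram, invoking at each stage a uniqueness statement so that the final output is determined up to the appropriate equivalence.

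First, I forget the pointing and apply the Gay--Kirby diagrammatic existence and uniqueness result to the underlying trisection diagram $(\Sigma;\alpha,\beta,\gamma)$: this produces a closed, oriented $4$--manifold $X = X_1\cup X_2\cup X_3$ equipped with a $(g;k_1,k_2,k_3)$--trisection, well-defined up to trisected diffeomorphism, with $\Sigma \subset X$ the trisection surface, and with $\alpha$, $\beta$, $\gamma$ bounding disks in the handlebodies $H_\alpha = X_3\cap X_1$, $H_\beta = X_1\cap X_2$, $H_\gamma = X_2\cap X_3$. The two marked points $\bold{x}_\pm$ then sit naturally on $\Sigma$, inside each of these three handlebodies' boundaries.

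Next I build the bridge arcs. For each color $\delta \in \{\alpha,\beta,\gamma\}$, the hypothesis says $(\Sigma;\delta,\bold{x}_+,\bold{x}_-)$ is pointed-slide-diffeomorphic to the trivial doubly pointed cut system, so in $H_\delta$ there is a properly embedded arc $t_\delta$ from $\bold{x}_+$ to $\bold{x}_-$ that is boundary-parallel into $\Sigma$; such an arc is unique up to isotopy rel $\partial$ in $H_\delta$, since the complement of a trivial arc in a handlebody is again a handlebody with a standard compression structure. Now for each $i\in\Z_3$, the doubly pointed Heegaard diagram $(\Sigma;\alpha_i,\alpha_{i+1},\bold{x}_+,\bold{x}_-)$ (writing $\alpha_1,\alpha_2,\alpha_3 = \beta,\gamma,\alpha$ for the spokes bordering $X_i$) is pointed-slide-diffeomorphic to the trivial $(g,k_i)$--diagram and hence represents the pair $(Y_{k_i},U)$, where the unknot $U$ is the union of the two boundary-parallel arcs inherited from the two spokes of $X_i$. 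In $Z_{k_i}\cong X_i$, any unknot in the boundary bounds a unique (up to isotopy rel $\partial$) trivial disk $D_i$; this is the standard fact about unknots in $\#^{k_i}(S^2\times S^1)$ together with the trivializing handle structure on $Z_{k_i}$.

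Finally, I glue. The three disks $D_1,D_2,D_3$ have boundaries that pairwise agree along the bridge arcs $t_\alpha, t_\beta, t_\gamma$ on the spines, so their union $\Kk = D_1\cup D_2\cup D_3$ is an embedded $2$--sphere in $X$ meeting each $X_i$ in a trivial disk and each $H_{\alpha_i}$ in the single arc $t_{\alpha_i}$. This is, by construction, a $1$--bridge trisection of $(X,\Kk)$ in the sense of the above definition. For uniqueness up to diffeomorphism of the pair, observe that any equivalence of diagrams (pointed slide moves plus diffeomorphisms of $\Sigma$ fixing $\bold{x}_\pm$) can be promoted through each of the above three construction steps: the first step by Gay--Kirby, the second because pointed slide moves induce ambient isotopies of the spokes carrying one bridge-arc system to another, and the third because the extension to trivial disks in each $X_i$ is unique rel $\partial$.

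The main obstacle, as with most results of this flavor, is not the existence of the sphere but the assembly of the three local uniqueness statements into a single ambient uniqueness statement for the pair $(X,\Kk)$. Concretely, one must verify that the isotopies used to standardize the bridge arcs in the spokes can be arranged so that their restrictions to $\Sigma$ agree pairwise, which is the step where the fact that we forbid arc-over-arc slides and require diffeomorphisms to fix $\bold{x}_\pm$ pays off. Once that compatibility is in hand, extending the combined spine isotopy across each $Z_{k_i}$ via the uniqueness of trivial disks yields a diffeomorphism of pairs, completing the argument.
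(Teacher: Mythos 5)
Your proof is correct and follows essentially the same route as the paper: construct $X$ from the underlying (unpointed) trisection diagram, place a boundary-parallel bridge arc connecting $\bold{x}_\pm$ in each spoke handlebody, use the pairwise triviality of the doubly pointed Heegaard diagrams to see that the two arcs in each $\partial X_i$ form an unknot, and cap off with the unique (up to isotopy rel boundary) trivial disk in each $Z_{k_i}$. Your closing discussion of propagating diagram equivalences to diffeomorphisms of pairs merely elaborates on the canonicity the paper leaves implicit and does not change the method.
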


\begin{proof}
	The construction of the ambient manifold $X$ from the data of the trisection diagram was described in Section~\ref{sec:trisections}.  Let $H_\alpha$, $H_\beta$, and $H_\gamma$ denote the described handlebodies with boundary $\Sigma$, and let $X_1$, $X_2$, and $X_3$ denote the 4--dimensional pieces bounded by $H_\alpha\cup_\Sigma \overline H_\beta$, $H_\beta\cup_\Sigma \overline H_\gamma$, and $H_\gamma\cup_\Sigma \overline H_\alpha$, respectively.
	
	Let $\omega_\alpha$ be a properly embedded arc in $H_\alpha$, obtained by taking an arc $\omega_\alpha^*$ in $\Sigma\setminus\nu(\alpha)$ connecting $\bold x_+$ and $\bold x_-$ and perturbing its interior into $H_\alpha$.  Define $\omega_\beta$ and $\omega_\gamma$ similarly.  Note that these arcs are unique up to proper isotopy inside their respective handlebodies, since they are boundary-parallel.  Since $(\Sigma;\alpha,\beta;\omega_\alpha^*,\omega_\beta^*)$ is a standard doubly pointed Heegaard diagram (albeit, augmented with arcs connecting the double points), the union $\omega_\alpha\cup\overline\omega_\beta$ is an unknot in $\partial X_1 = H_\alpha\cup_\Sigma\overline H_\beta$.  The same is true of the pairs $(\partial X_2,\omega_\beta\cup\overline\omega_\gamma)$ and $(\partial X_3,\omega_\gamma\cup\overline\omega_\alpha)$.  These unknots can be capped off with disks $\Dd_i$ in a canonical way in the corresponding 4--dimensional 1--handlebodies $X_i$.  (See Lemma~2.3 of~\cite{MeiZup_Bridge-trisections_}.)
	
	Thus, the 2--knot $(X,\Kk)$, where $\Kk = \Dd_1\cup\Dd_2\cup\Dd_3$, is canonically determined by the original data of a doubly pointed trisection diagram.
\end{proof}

\subsection{Trisecting 2--knot exteriors}\label{subsec:exteriors}

In this subsection we discuss how a $1$--bridge trisection $X=X_1 \cup X_2 \cup X_3$ of a $2$--knot $(X,\Kk)$ immediately gives a relative trisection of the exterior $X \setminus \nu(K)$, and we use this to prove Theorem~\ref{thm:uniqueness}. We also show how to get a relative trisection diagram for this trisection of the exterior. To see the first fact, choose the neighborhood and its product structure $\nu(\Kk) \cong \Kk \times D^2$ so that each $X_i\cap\nu(\Kk)$ is $(X_i \cap \Kk) \times D^2$. Then we see that, in transitioning from $X$ to the exterior $X \setminus \nu(K)$, we have simply removed a 4--ball from each $X_i$, since $X_i\cap\Kk$ is a disk.  Similarly, we have removed a 3--ball from each $X_i \cap X_{i+1}$, and two disks from $X_1 \cap X_2 \cap X_3$; thus, the $X_i$ do in fact induce a trisection of the exterior.

\begin{proof}[Proof of Theorem~\ref{thm:uniqueness}]
 Consider two $1$--bridge trisections $X=X_1 \cup X_2 \cup X_3$ and $X=X'_1 \cup X'_2 \cup X'_3$ of the same $2$--knot $(X,\Kk)$. By an ambient isotopy we can assume that $X_i \cap \Kk = X'_i \cap \Kk$, since in both cases we simply have ``trisections'' of a $2$--sphere into three bigons meeting at two points. A further isotopy then arranges that the trisections agree on a neighborhood $\nu(\Kk)$ of $\Kk$. Thus, we have two relative trisections of the exterior $X \setminus \nu(\Kk)$ inducing the same (annular) open book on the boundary. Then Theorem~\ref{thm:relexistunique} tells us that these trisections become isotopic after stabilization, but since these stabilizations happen away from $\Kk$, this proves the result.
\end{proof}

As for the relative trisection diagram for $X \setminus \nu(\Kk)$, it should be clear now that this is simply the original doubly pointed diagram minus open disk neighborhoods of the two points. This is because the central surface $X_1 \cap X_2 \cap X_3$ has two disks removed, and yet every curve that bounded a disk in a given handlebody before removing $\nu(\Kk)$ still bounds a disk in the induced compressionbodies. To turn this into an arced relative diagram, perform handle slides on $\alpha$ and/or $\beta$ curves until the two boundary components are in the same component of $\Sigma \setminus (\alpha \cup \beta)$, and then connect them by an arc $\frak a = \frak b$. Then slide this arc over $\beta$ curves, and perhaps perform handle slides on $\gamma$, to get an arc disjoint from $\gamma$ and this is the arc $\frak c$. Repeating this process with $\gamma$ and $\alpha$ gets back to $\frak a^*$. This is illustrated in a few key examples in the next section.

Although the above argument is straightforward, it is worth pointing out two directions in which one might wish to extend these ideas both of which take more work. First, the given trisection {\em does not} induce a trisection of $\nu(\Kk)$; in particular, $X_1 \cap X_2 \cap X_3 \cap \nu(\Kk)$ is disconnected. This can be remedied by carefully stabilizing the given closed trisection inside $\nu(\Kk)$, so that we genuinely see the closed trisection as the union a trisection of $\nu(\Kk)$ to a trisection of $X \setminus \nu(\Kk)$. Secondly, if the bridge number is larger than one we do not get a trisection of either the neighborhood nor the exterior. This can also, in principle, be fixed in a similar way but can be quite complicated in practice. In particular, since higher genus surfaces never have $1$--bridge trisections, surgery along surfaces of nonzero genus, and along non-orientable surfaces, is necessarily more subtle. Kim and Miller show how to obtain a relative trisection for the exterior of a knotted surface from a (generalized) bridge trisection of the knotted surface~\cite{KimMil_Trisections-of-surface_18}; they apply their technique to study the Price twist, which can be described as a version of Gluck surgery for knotted projective planes.

\subsection{Examples of trisected 2--knot exteriors}\label{subsec:examples}

We now consider a number of important examples of 2--knots in simple 4--manifolds that can be put in 1--bridge position.  As discussed above, this will allow us to present annular arced trisection diagrams for these 2--knot exteriors, and we will make significant use of these in the proof of Theorem~\ref{thm:surgery} in Section~\ref{sec:surgery}.

Each image in the first column of Figure~\ref{fig:1b_Comps} is a doubly pointed trisection diagram $\DD$ corresponding to a 2--knot $(X,\Kk)$.  The corresponding image in the second column is the corresponding relative trisection diagram for the exterior $E_\Kk = X\setminus\nu(\Kk)$, as described above.  The corresponding image in the third column is an arced trisection diagram $\DD^\circ$ for $E_\Kk$.  We will now remark on each row.  Justification for the first column of Figure~\ref{fig:1b_Comps} is given by Figure~1 of~\cite{MeiZup_Bridge-trisections_}.

\begin{figure}[h!]
	\centering
	\includegraphics[width=.9\textwidth]{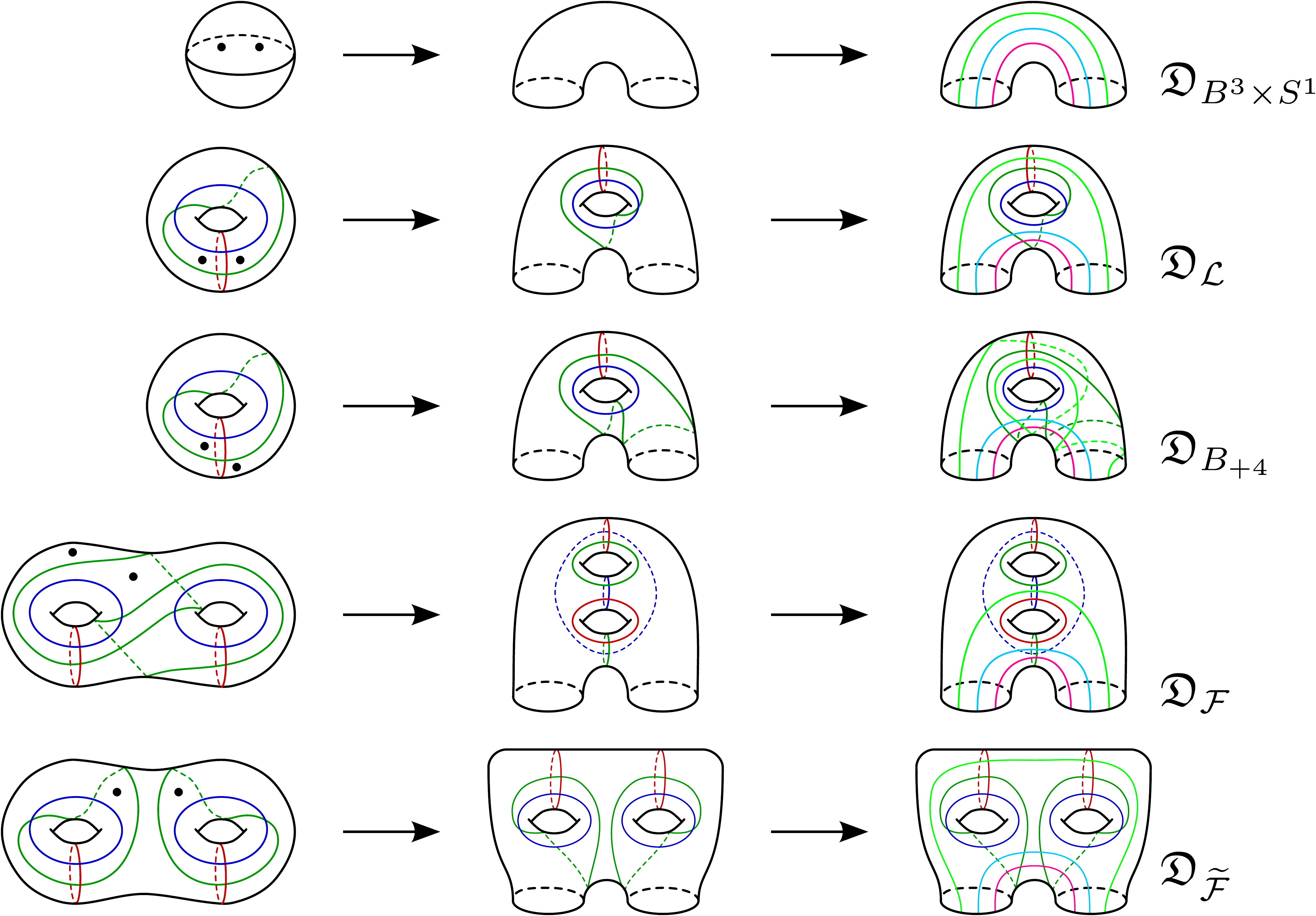}
	\caption{Five examples of how one can go from a doubly pointed trisection diagrams for a 2--knot $(X,\Kk)$ to an arced relative trisection diagrams for the exterior $E_\Kk$.  From top to bottom, the pairs are:  The unknotted 2--knot $(S^4,\Uu)$, projective line $(\CP^2,\Ll)$, the conic $(\CP^2,\Cc)$, the fiber $(S^2\times S^2,\Ff)$, and  the fiber $(S^2\wt\times S^2,\widetilde\Ff)$.  The corresponding complements are:  $B^3\times S^1$, $B^4$, the $\Z_2$--homology ball $B_{+4}$ with boundary $L(4,1)$, $S^2\times D^2$, and $S^2\times D^2$.}
	\label{fig:1b_Comps}
\end{figure}

The first row corresponds to the unknotted 2--knot in the 4--sphere: $(S^4,\Uu)$.  In this case, since $\Uu$ is fibered by 3--balls, $E_\Uu\cong B^3\times S^1$.  The arced trisection diagram $\DD^\circ_\Uu = \DD_{B^3\times S^1}$ is $0$--annular, since $\Uu\cdot\Uu = 0$.  Note that the corresponding trisection $\Tt^\circ_{B^3\times S^1}$ of $B^3\times S^1$ is the unique trisection of type $(0,0;2,0)$.

The second row corresponds to the projective line in the projective plane: $(\CP^2,\Ll)$.  In this case, $E_\Ll\cong B^4$, and the given arced trisection diagram $\DD^\circ_\Ll$ is $(+1)$--annular, since $\Ll\cdot\Ll = 1$.  Similarly, we can consider the mirror pair $(\overline\CP^2,\overline\Ll)$.  In this case, $E_{\overline\Ll}\cong B^4$, but $\DD^\circ_{\overline\Ll} = \overline\DD^\circ_\Ll$ is $(-1)$--annular, since $\overline\Ll\cdot\overline\Ll =-1$.

The third row corresponds to the degree two curve in the projective plane $(\CP^2,\Cc)$, which we refer to as the \emph{conic}.  In this case, $E_\Cc$ is the $\Z_2$--homology 4--ball $B_{+4}$ satisfying $\partial B_{+4} = L(4,1)$.  (See Subsection~\ref{subsec:surgery} for details.)  Since $\Cc\cdot\Cc = 4$, we have that $\DD^\circ_\Cc = \DD_{B_{+4}}$ is $(+4)$--annular.  Similarly, when considering the mirror pair $(\overline\CP^2,\overline\Cc)$, we have that $E_{\overline\Cc}\cong \overline B_{+4}$ and that $\DD^\circ_{\overline\Cc} = \overline\DD^\circ_\Cc = \DD_{B_{-4}}$ is $(-4)$--annular.

The trisections $\Tt^\circ_\Ll$ and $\Tt^\circ_\Cc$ are, up to mirroring, the only irreducible\footnote{A trisection is \emph{irreducible} if it is not the connected sum of trisections.} $(1,0;0,2)$--trisections~\cite{MeiZup_Bridge-trisections_}.  In general, the disk bundle $N_e$ with Euler number $e(N_e) = e$ admits a $(2,0;0,2)$--trisection~\cite{CasGayPin_Diagrams-for-relative_18}, and it is not known whether or not there are other 4--manifolds that admit irreducible trisections of this sort.

The fourth and fifth rows correspond, respectively, to the fiber $\Ff = S^2\times\{pt\}$ in the trivial bundle
$$S^2\hookrightarrow S^2\times S^2\twoheadrightarrow S^2$$
and the fiber $\widetilde\Ff = \pi^{-1}(pt)$ in the twisted bundle
$$S^2\hookrightarrow S^2\wt\times S^2\stackrel{\pi}{\twoheadrightarrow} S^2.$$
In these cases, we have $E_\Ff \cong E_{\widetilde\Ff} \cong S^2\times D^2$, and both of the arced diagrams $\DD^\circ_\Ff=\DD_\Ff$ and $\DD^\circ_{\widetilde\Ff} = \DD_{\widetilde\Ff}$ are $0$--annular $(2,0;0,2)$--trisection diagrams.

\subsection{Recording the core of $S^2\times D^2$}\label{subsec:core}

We end this section with a discussion of the special case of a 2--sphere inside a 4--manifold with nonempty boundary, which is not technically a 2--knot, by our definition.  By a \emph{core}, we mean the 2--sphere $S^2\times\{pt\}$ inside $S^2\times D^2$.  The importance of the core is 2--fold:  First, given any 2--knot $(X,\Kk)$, inside the neighborhood $\nu(\Kk)$, the 2--knot $\Kk$ is a core.  Second, when we form a new 4--manifold by gluing $S^2\times D^2$ to a 4--manifold $E$ with $\partial E = S^2\times S^1$, the core naturally into the result as the 2--knot $(E\cup S^2\times D^2, S^2\times\{pt\})$, which is called the \emph{dual 2--knot} for this gluing.

Rather than explicitly develop a theory of bridge trisected position with respect to relative trisections for knotted surfaces in 4--manifolds with boundary, we will content ourselves in the present article to an \emph{ad hoc} description of a core, which we will refer to not as a doubly pointed relative trisection diagram (as one might), but simply as a \emph{core diagram}, two of which are shown in Figure~\ref{fig:core_knots}.

\begin{definition}
	A \emph{core diagram} is a tuple
	$$(\Sigma;\alpha,\beta,\gamma;\frak a,\frak b,\frak c, \bold x_+, \bold x_-),$$
	where
	$$(\Sigma;\alpha,\beta,\gamma;\frak a,\frak b,\frak c)$$
	is a 0--annular arced relative trisection diagram	for $S^2\times D^2$, and $\bold x_+$ and $\bold x_-$ are points in the exterior of the arcs and curves in $\Sigma$ such that $\bold x_+$ and $\bold x_-$ can be isotoped, in the exterior of the arcs and the curves, to lie in distinct components of $\partial\Sigma$.
\end{definition}

\begin{figure}[h!]
	\centering
	\includegraphics[width=.5\textwidth]{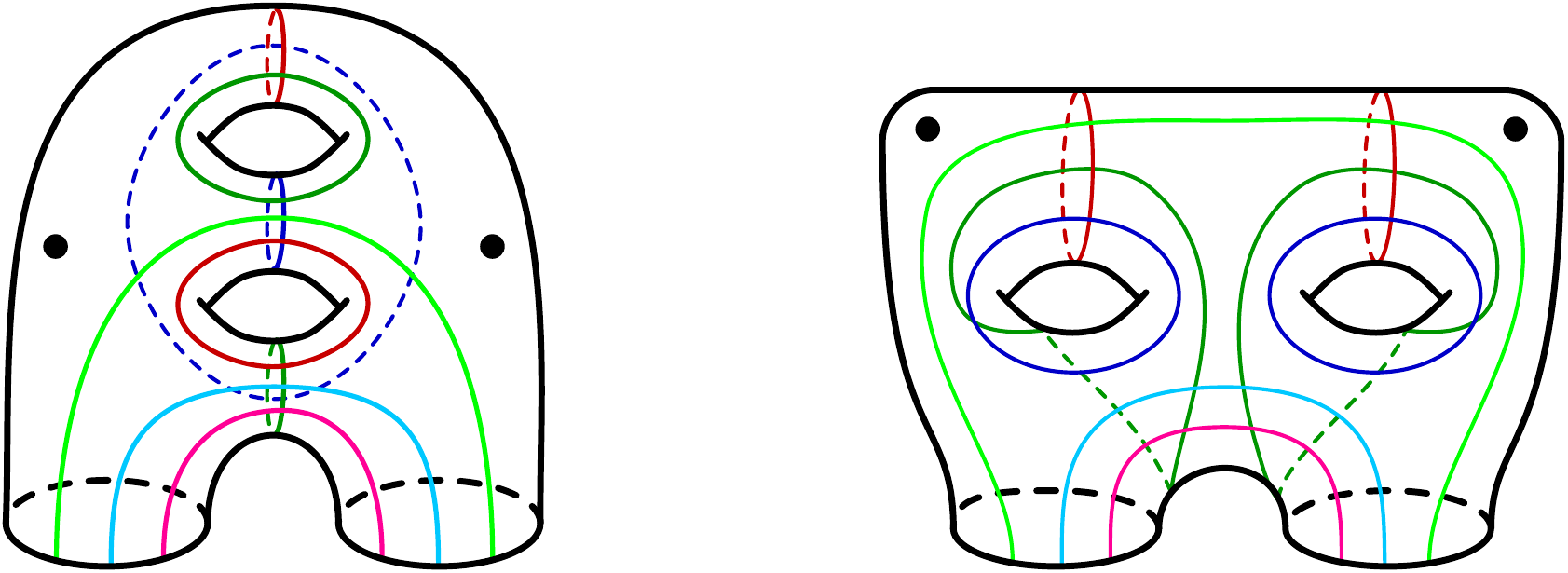}
	\caption{Two core diagrams, each of which represents the pair $(S^2\times D^2,S^2\times\{pt\})$.}
	\label{fig:core_knots}
\end{figure}

The next lemma verifies that the result of gluing a core diagram to a compatible arced diagram is a doubly pointed diagram for the resulting dual 2--knot.

\begin{lemma}
\label{lem:cores}
	If $\DD_\text{core}$ is a core diagram with underlying arced diagram $\DD$ describing $S^2\times D^2$  and $\DD^\circ$ is a 0--annular arced diagram for a 4--manifold $E$ with $\partial E \cong S^2\times S^1$, then $\DD^\circ\cup\DD_\text{core}$ is a doubly pointed trisection diagram for the pair $(X,\Kk)$, where $X = E\cup_{\Upsilon(\DD,\DD^\circ)} S^2\times D^2$ and $\Kk$ is the dual 2--knot for this gluing.
\end{lemma}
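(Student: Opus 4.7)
The plan is first to apply the gluing machinery of Section~\ref{sec:gluing} to establish that the underlying closed Heegaard triple is a trisection diagram for $X$, and then to verify that the two marked points $\bold x_\pm$ equip it with the correct doubly pointed structure for the dual 2--knot.

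Since both $\DD$ and $\DD^\circ$ are 0--annular arced diagrams, Lemma~\ref{lem:compatible} ensures they are gluing compatible (for any compatible identification of the boundary open books), and Proposition~\ref{prop:glue} then implies that the Heegaard triple obtained by gluing $\Sigma$ to $\Sigma^\circ$ along the arc endpoints is a trisection diagram for $X = E \cup_{\Upsilon(\DD,\DD^\circ)} (S^2 \times D^2)$.

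I would next verify that, for each of the three color pairs, the resulting cut system pair on the glued surface, together with $\bold x_+$ and $\bold x_-$, is pointed-slide-diffeomorphic to the trivial doubly pointed $(g,k)$--diagram. The defining condition on a core diagram---that $\bold x_+$ and $\bold x_-$ can be isotoped in the complement of all arcs and curves to distinct boundary components of $\Sigma$---guarantees that, on the core side, there is an arc from $\bold x_+$ to $\bold x_-$ in the complement of each individual cut system, and hence that the corresponding arcs of the core 2--sphere in the core-side relative compression bodies are properly isotopic into $\Sigma_{\text{core}}$. Since the $\DD^\circ$ side carries no marked points and $\Sigma_{\text{core}}$ persists as a subsurface of the closed glued $\Sigma$, these arcs remain boundary parallel in the closed handlebodies of $X$ formed by welding the core-side and $\DD^\circ$-side compression bodies across the open book gluing. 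Equivalently, in each Heegaard 3--manifold determined by a color pair, the two points together with their boundary parallel connecting arcs describe an unknot in 1--bridge position, which is the geometric meaning of pointed-slide-diffeomorphism to the trivial doubly pointed $(g,k)$--diagram.

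Finally, Proposition~\ref{prop:realizing_2-knot} produces from $\DD^\circ \cup \DD_{\text{core}}$ a canonical 2--knot $(X, \Kk')$ in 1--bridge position meeting $\Sigma$ in $\bold x_\pm$, each spoke handlebody in the boundary parallel arc identified above, and each sector in a trivial disk. By design, $\DD_{\text{core}}$ encodes the 1--bridge trisection of the core $S^2 \times \{pt\} \subset S^2 \times D^2$, so the dual 2--knot $\Kk \subset X$ has identical intersection data with the glued trisection. The uniqueness clause of Proposition~\ref{prop:realizing_2-knot} then forces $\Kk' = \Kk$. I expect the main obstacle will be the middle step, where one must carefully track how the boundary parallelism of the core-side arcs in relative compression bodies is inherited by the closed spoke handlebodies of the glued trisection.
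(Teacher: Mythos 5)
Your first step matches the paper exactly: Lemma~\ref{lem:compatible} gives gluing compatibility, and Proposition~\ref{prop:glue} identifies the glued diagram as a trisection diagram for $X = E\cup_{\Upsilon(\DD,\DD^\circ)} S^2\times D^2$.

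Your middle step (verifying that the pairs of cut systems together with $\bold x_\pm$ are pointed-slide-diffeomorphic to trivial doubly pointed diagrams) is sound, though it is not where the paper spends its effort. The real content of the lemma, which your proof treats with a hand-wave, is the final identification: you write that ``by design, $\DD_\text{core}$ encodes the $1$--bridge trisection of the core $S^2\times\{pt\}\subset S^2\times D^2$.'' This is not a consequence of the definition of a core diagram --- the definition only asks that $\bold x_\pm$ can be isotoped into distinct components of $\partial\Sigma$ and says nothing about which $2$--sphere the points encode. That the encoded sphere really is the fiber $S^2\times\{pt\}$ (equivalently, the dual $2$--knot of the gluing) is exactly what must be proved. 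The paper does this geometrically: since $\bold x_\pm$ lie near $\partial\Sigma$, they can be joined by arcs parallel to $\frak a$, $\frak b$, $\frak c$, and those arcs lie on annular pages of the open book on $\partial E\cong S^2\times S^1$. The traces of the parallelism across the compression bodies fill in three disks, producing a sphere sitting inside $S^2\times S^1$ that intersects each annular page in a single properly embedded arc, and therefore intersects every core circle of a page in a single point; this forces the sphere to be $S^2\times\{pt\}$. Invoking Proposition~\ref{prop:realizing_2-knot} alone, as you do, produces \emph{a} $2$--knot in $X$ but does not by itself identify it with the dual $2$--knot. You also flag the middle step as the main obstacle; in fact that step is routine, and it is this last identification that carries the argument.
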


\begin{proof}
	By Lemma~\ref{lem:compatible}, $\DD^\circ$ and $\DD$ are gluing compatible, so their union describes the desired ambient 4--manifold.  It remains to see that the inclusion of $\bold x_+$ and $\bold x_-$ into $\DD^\circ\cup\DD$ changes this trisection diagram into a doubly pointed trisection diagram.
	
	The points $\bold x_+$ and $\bold x_-$ can be connected by arcs that are parallel to the arcs $\DD$, because of the way they can be assumed to lie in $\partial\DD$.  It follows that these points encode a sphere $S^2\times\{pt\}$, where $\{pt\}$ can be assumed to lie in the boundary $ S^2\times S^1$, if desired.  We already have that $\bold x_\pm$ lie in $S^2\times S^1$.  The arcs connecting them naturally lie on annular fibers in $S^2\times S^1$ and are pairwise parallel there.  If follows that there is a sphere in $S^2\times S^1$ that is cut into three disks (the traces of these parallelisms) that intersect pairwise in the arcs and have common intersection $\bold x_\pm$.  Since this sphere intersects each annulus page in a properly-embedded arc, it intersects any circle core of a page in a point.  It follows that the sphere is $S^2\times\{pt\}$.
	
	This sphere, along with the induced 1--bridge trisection just described with respect to the twice-punctured genus two central surface of the trisection of $S^2\times D^2$ corresponding to $\DD$, persists in the trisection corresponding to $\DD_\text{core}\cup\DD^\circ$, giving an honest 1--bridge trisection of $(X,\Kk)$, as desired.
\end{proof}

\section{Surgeries on 2--knots and their diagrams}\label{sec:surgery}

In this section, we give formal definitions of the relevant surgery operations, and we prove Theorem~\ref{thm:surgery}, which gives a trisection diagrammatic illustration of each surgery operation.

\subsection{Surgery on 2--knots}\label{subsec:surgery}

Let $\Kk$ be a 2--knot in a 4--manifold $X$ with self-intersection zero; i.e., $\Kk$ has a trivial normal bundle in $X$.  Then, $\nu(\Kk)\cong S^2\times D^2$, and the exterior $E_\Kk$ of $\Kk$ in $X$ has boundary $\partial E_\Kk\cong S^2\times S^1$.  There are two important surgery operations on such 2--knots.  First, consider the 4--manifold
$$X(\Kk) = E_\Kk\cup_{S^2\times S^1}B^3\times S^1$$
obtained by gluing $B^3\times S^1$ to $E_\Kk$ using some diffeomorphism of their $S^2\times S^1$ boundaries.  The manifold $X(\Kk)$ is said to be the result of performing \emph{sphere surgery} on $\Kk$ in $X$.  By~\cite{LauPoe_A-note-on-4-dimensional_72}, every self-diffeomorphism of $S^2\times S^1$ extends over $B^3\times S^1$, so the result of sphere surgery depends only on $\Kk$ and $X$.

As an example of sphere surgery, note that if $\Kk$ is a fibered 2--knot in $S^4$ with fiber some punctured 3--manifold $M^\circ$, then $X(\Kk)$ has the structure of a 3--manifold bundle over $S^1$ with fiber $M$.  In particular, since the unknotted 2--sphere $\Uu$ in $S^4$ is fibered by 3--balls, we have that $S^4(\Uu)\cong S^3\times S^1$.

Next, consider the 4--manifold
$$X_*(\Kk) = E_\Kk\cup_\tau S^2\times D^2$$
obtained by gluing $S^2\times D^2$ to $E_\Kk$ using the diffeomorphism $\tau\colon S^2\times S^1\to S^2\times S^1$ defined by
$$\tau(x,\theta) = (\rho_\theta(x),\theta),$$
where $\rho_\theta\colon S^2\times S^2$ is rotation (about some axis) through the angle $\theta$.  This operation was introduced by Gluck~\cite{Glu_The-embedding-of-two-spheres_62}, who showed that the diffeomorphism $\tau$ (called the \emph{Gluck twist}) is the unique diffeomorphism of $S^2\times S^1$ that does not extend to a diffeomorphism of $S^2\times D^2$ and that the result of this gluing is independent of the choice of axis of rotation.  The manifold $X_*(\Kk)$ is said to be the result of \emph{Gluck surgery} on $\Kk$ in $X$.

As an example of Gluck surgery, consider the fiber $\Kk = S^2\times\{pt\}$ inside $X = S^2\times S^2$.  Then, $(X)_*(\Kk)\cong S^2\wt\times S^2$. On the other hand, Gluck showed that $S^4_*(\Kk)$ is a homotopy 4--sphere for all $\Kk\subset S^4$, but it is unknown whether or not this homotopy 4--sphere is always diffeomorphic to $S^4$~\cite{Glu_The-embedding-of-two-spheres_62}.

There is an inverse operation to sphere surgery, which we call circle surgery.  Let $\omega$ be a  simple, closed curve in a 4--manifold $X$.  Then, $\nu(\omega)\cong B^3\times S^1$, and the exterior $E_\omega = X\setminus\nu(B^3\times S^1)$ has boundary $S^2\times S^1$. We can form a new manifold
$$X_\varepsilon(\omega) = E_\omega\cup_\varepsilon S^2\times D^2,$$
where $\varepsilon:S^2\times S^1 \to S^2 \times S^1$ is either $\id$ or $\tau$. In either case, we call $X_\varepsilon(\omega)$ the result of \emph{circle surgery} on $\omega$ in $X$.  Note that $X_\tau(\omega)$ and $X_\id(\omega)$ are related by Gluck surgery along their cores.

Now we turn to the instance in which $\Kk$ has self-intersection $\pm1 $ or $\pm4$ inside $X$.  First, we assume the former case, so $\partial E_\Kk\cong S^3$.  Consider the 4--manifold
$$X_{\pm 1}(\Kk) = E_\Kk\cup_{S^3}B^4$$
obtained by gluing $B^4$ to $E_\Kk$ using some diffeomorphism of $S^3$.  As in the case of sphere surgery, the result is independent of the diffeomorphism.  The manifold $X_{\pm 1}(\Kk)$ is said to be the result of a \emph{$(\pm 1)$--blowdown} of $\Kk$ in $X$.

When $\Kk$ has self-intersection $\pm 4$, we have that $\partial E_\Kk\cong L(4,\pm 1)$.  Fintushel and Stern showed that $L(4,\pm 1)$ bounds a rational-homology 4--ball $B_{\pm 4}$ and that every self-diffeomorphism of $L(4,\pm 1)$ extends over $B_{\pm 4}$~\cite{FinSte_Rational-blowdowns_97}.  Thus, the 4--manifold
$$X_{\pm 4}(\Kk) = E_\Kk\cup_{L(4,\pm 1)}B_{\mp 4},$$
which is said to be the result of a \emph{$(\pm 4)$--rational blowdown} along $\Kk$ in $X$, depends only on $\Kk$ and~$X$.  Note that $L(4,1)$ does not admit an orientation reversing self-diffeomorphism, so we must be careful: a $(\pm 4)$--rational blowdown on a $(\pm 4)$--framed 2--knot $\Kk$ in $X$ requires gluing $B_{\mp 4}$ to $E_\Kk$ using an orientation-reversing diffeomorphism from $L(4,\pm 1)$ to $L(4,\mp 1)$.

The simplest example of a 2--knot with self-intersection $+4$ is the degree-two curve $\Cc = \Cc_2$ in $\CP^2$.  In fact, $\CP^2\setminus\nu(\Cc)\cong B_{+4}$~\cite{FinSte_Rational-blowdowns_97}.  Similarly, $\overline \CP^2\setminus\nu(\overline\Cc)\cong B_{-4}$.  So, the result of a $(+4)$--rational blowdown on $\Cc_2$ in $\CP^2$ can be described as
$$\CP^2_{+4}(\Cc) \cong B_{+4}\cup_{L(4,1)}B_{-4},$$
which turns out to coincide with the spun manifold $\Ss(\RP^3)$, as well as the non-trivial sphere-bundle over $\RP^2$. See Example~\ref{ex:conic}, below.

\subsection{Gluck twins}
\label{subsec:twins}

Although the main goal of this section (and this paper) is to describe how to go from a doubly pointed trisection diagram for a 2--knot $(X,\Kk)$ to one of several manifolds resulting from performing some surgery operation along $\Kk$ in $X$, the methods employed are slightly more general, applying to any 4--manifold $E$ with $\partial E = S^2\times S^1$.  This is, however, something of a distinction without a difference, since any such $E$ gives rise to a pair of 2--knots:
$$(X,\Kk) = (E\cup_\id S^2\times D^2,S^2\times\{pt\}) \text{ and } (X',\Kk') = (E\cup_\tau S^2\times D^2, S^2\times\{pt\}),$$
where $X$ and $X'$ may or may not be diffeomorphic, homeomorphic, or even homotopy-equivalent, and even if they are diffeomorphic, $\Kk$ and $\Kk'$ may not be smoothly or even topologically isotopic.  We call the 2--knots $(X,\Kk)$ and $(X',\Kk')$ \emph{Gluck twins}, since they share a common exterior.

We now mention a handful of interesting examples of Gluck twins. (See also Section~\ref{sec:examples}, where we consider these examples -- and others -- through the lens of trisections.) First, the fibers $(S^2\times S^2,\Ff)$ and $(S^2\wt\times S^2, \widetilde\Ff)$ are twins, since, as we saw in Subsection~\ref{subsec:examples}, they both have exterior $S^2\times D^2$.  This is an example where the ambient space of the twins are not even homotopy-equivalent.  To the other extreme, there are many families of 2--knots in $S^4$ that are known to be smoothly isotopic to their twin.  (In particular, the twins have the same ambient space.)  Gluck gave a sufficient condition, based on a Seifert hyper-surface bounded by the 2--knot, for a 2--knot to be equivalent to its twin; hence, the ambient 4--manifold is preserved, as well. (See Theorem~17.1 of~\cite{Glu_The-embedding-of-two-spheres_62}.)  This condition is strong enough to show that ribbon 2--knots and fibered, homotopy-ribbon 2--knots in $S^4$ are ambient isotopic to their twins~\cite{Coc_Ribbon-knots_83}.

In between this extreme, there are many interesting examples.  See~\cite{Sun_Surfaces-in-4-manifolds:_15} for a discussion of the case of 2--knots that are topologically isotopic within a 4--manifold, but not smoothly isotopic.  In a slightly different vein, Gordon showed that 2--knots in $S^4$ are not determined by their complement, in general~\cite{Gor_Knots-in-the-4-sphere_76}.

%

\subsection{Proof of Theorem~\ref{thm:surgery}}
\label{subsec:proof_surgery}

We are now ready to prove our main result, which gives diagrammatic characterizations of 4--dimensional surgery operations on 2--knots. The proof will follow from a sequence of lemmata.  First, we consider the case corresponding to sphere surgery on a 2--knot.  We refer the reader to Figure~\ref{fig:surgery_sequence} for an illustration of the statement of our first lemma.  Let $\DD_{B^3\times S^1}$ be the 0--annular arced trisection diagram corresponding to $B^3\times S^1$, as in Figure~\ref{fig:thm_gluings} and Figure~\ref{fig:1b_Comps}.

\begin{lemma}
\label{lem:surgery}
	Let $\DD^\circ$ be a 0--annular arced trisection diagram for a 4--manifold $X$ with $S^2\times S^1$ boundary. Then, the union
	$$\DD' = \DD^\circ\cup\DD_{B^3\times S^1}$$
	is a trisection diagram for the 4--manifold $X' = X\cup (B^3\times S^1)$.
\end{lemma}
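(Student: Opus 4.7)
The plan is to assemble the conclusion from three already-established ingredients: Proposition~\ref{prop:glue} (gluing of arced diagrams), Lemma~\ref{lem:compatible} (canonicity of the gluing in the annular case), and the Laudenbach--Po\'enaru theorem~\cite{LauPoe_A-note-on-4-dimensional_72} (which controls the ambiguity in gluing $B^3\times S^1$ to anything along $S^2\times S^1$).

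First, I would verify that $\DD_{B^3\times S^1}$ is itself a $0$--annular arced trisection diagram. Indeed, $B^3\times S^1$ has boundary $S^2\times S^1$ and carries the obvious trivial annular open book (annular page with identity monodromy), which corresponds precisely to the $p=0$ instance of the annular setup of Subsection~\ref{subsec:annular}; by construction (compare the top row of Figure~\ref{fig:1b_Comps}) the diagram $\DD_{B^3\times S^1}$ realizes this structure. By hypothesis $\DD^\circ$ is also $0$--annular, so by Lemma~\ref{lem:compatible} the two arced diagrams are gluing compatible, and moreover the resulting gluing diffeomorphism $\Upsilon(\DD^\circ,\DD_{B^3\times S^1})$ between their $S^2\times S^1$ boundaries is canonically determined up to isotopy, independent of the choices of arcs.

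Next, I would invoke Proposition~\ref{prop:glue}. The proposition immediately yields that the combinatorial union $\DD^\circ\cup\DD_{B^3\times S^1}$ is a closed trisection diagram, and that the associated closed $4$--manifold is
\[
X\;\cup_{\,\Upsilon(\DD^\circ,\DD_{B^3\times S^1})^{-1}}\;(B^3\times S^1),
\]
that is, $X$ glued to $B^3\times S^1$ via some specific orientation-reversing diffeomorphism of their $S^2\times S^1$ boundaries. The genus/$k_i$ count of the resulting trisection is routine bookkeeping from the pieces, agreeing with the $(g+1;k_1+1,k_2+1,k_3+1)$ prediction of Theorem~\ref{thm:surgery}.

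The only remaining issue is to identify this particular gluing with the manifold $X' = X\cup (B^3\times S^1)$ defined by sphere surgery, where no specific gluing map was selected. Here the point is the Laudenbach--Po\'enaru theorem: every self-diffeomorphism of $S^2\times S^1$ extends across $B^3\times S^1$, so the diffeomorphism type of $X\cup_\varphi(B^3\times S^1)$ does not depend on the choice of $\varphi$. This means that whatever $\Upsilon(\DD^\circ,\DD_{B^3\times S^1})$ turns out to be, the resulting closed $4$--manifold is $X'$, completing the proof. I do not anticipate any real obstacle: the lemma is essentially a compilation of the framework from Section~\ref{sec:gluing} applied in the very simplest annular case, and the Laudenbach--Po\'enaru input is exactly what removes the need to track the boundary gluing explicitly.
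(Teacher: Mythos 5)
Your proposal is correct and follows essentially the same route as the paper: use Lemma~\ref{lem:compatible} to see that the two $0$--annular arced diagrams are gluing compatible, invoke Proposition~\ref{prop:glue} to conclude that the glued diagram describes the glued manifold, and appeal to Laudenbach--Po\'enaru to dispose of the dependence on the choice of boundary gluing. The only difference is that you spell out each step a bit more explicitly than the paper does.
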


Note that $\overline\DD_{B^3\times S^1} = \DD_{B^3\times S^1}$, in accordance with the fact that $B^3\times S^1$ admits an orientation-reversing self-diffeomorphism.

\begin{proof}
	By Lemma~\ref{lem:compatible}, since $\DD^\circ$ and $\DD_{B^3\times S^1}$ are both 0--annular arced trisection diagrams, they are gluing compatible.  So, the diagram $\DD'$ describes $X'$ by Proposition~\ref{prop:glue}.  Note that this gluing is unique by~\cite{LauPoe_A-note-on-4-dimensional_72}.
\end{proof}

\begin{figure}[h!]
	\centering
	\includegraphics[width=.75\textwidth]{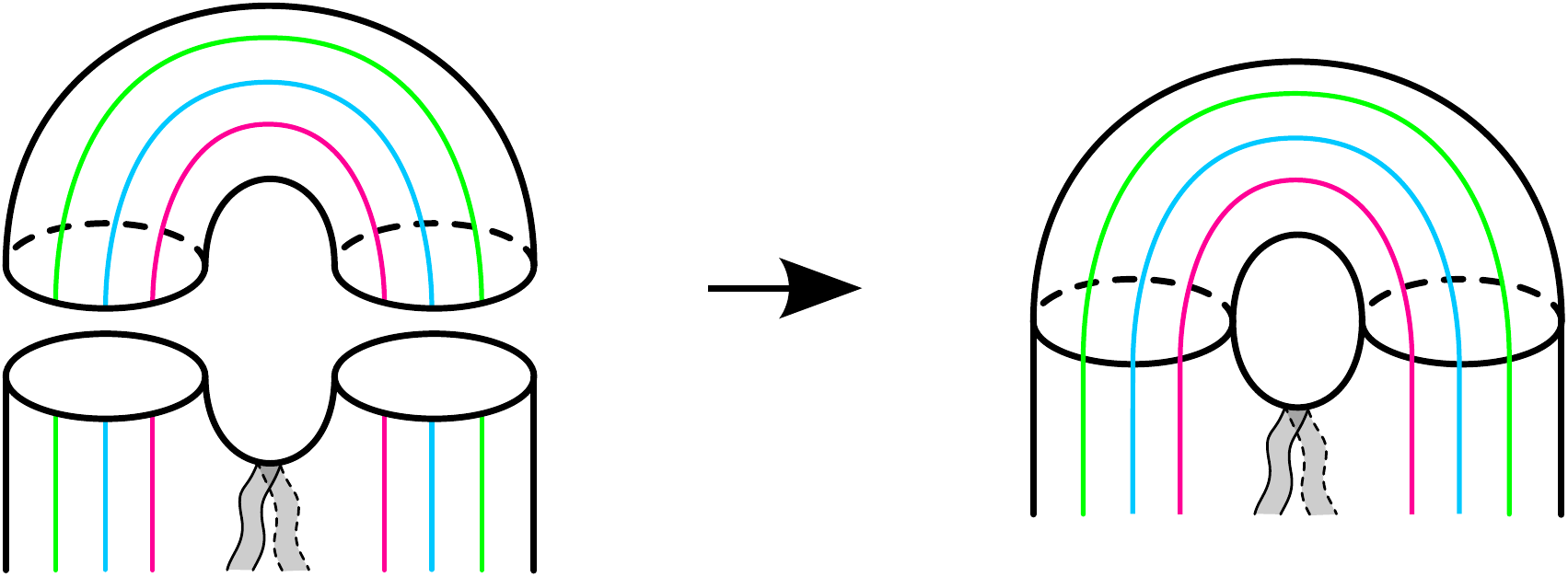}
	\caption{The left image shows 0--annular arced trisection diagrams $\DD^\circ$ and $\DD_{B^3\times S^1}$, corresponding respectively to a 4--manifold $X$ with $\partial X = S^2\times S^1$ and the 4--manifold $B^3\times S^1$, as in Lemma~\ref{lem:surgery}.  The right image shows the result $\DD'$ of gluing these diagrams together.}
	\label{fig:surgery_sequence}
\end{figure}

Next, we consider the case corresponding to a rational blowdown on a 2--knot with self-intersection $\pm 4$.  We refer the reader to Figure~\ref{fig:rational_sequence} for an illustration of the second lemma.  The proof is nearly identical to that of the previous lemma. Let $\DD_{B_{+4}}$ be the $(+4)$--annular arced trisection diagram corresponding to $B_{+4}$, as in Figures~\ref{fig:thm_gluings} and~\ref{fig:1b_Comps}.

\begin{lemma}
\label{lem:rational}
	Let $\DD^\circ$ be a $(+4)$--annular arced trisection diagram for a 4--manifold $X$ with $L(4,1)$ boundary.  Then, the union
	$$\DD' = \DD^\circ\cup\overline\DD_{B_{+4}}$$
	is a trisection diagram for the 4--manifold $X\cup \overline{B_{+4}}$.  Similarly, $\partial X = L(4,-1)$ boundary and $\DD^\circ$ is $(-4)$--annular, then the diagram
	$$\DD' = \DD^\circ\cup\DD_{B_{+4}}$$
	describes the 4--manifold $X\cup B_{+4}$.
\end{lemma}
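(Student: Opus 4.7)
The plan is to mimic the proof of Lemma~\ref{lem:surgery}, replacing the role of $\DD_{B^3\times S^1}$ with $\overline{\DD_{B_{+4}}}$ and supplying the appropriate gluing-compatibility statement. First, I would observe that $\overline{\DD_{B_{+4}}}$ is an arced relative trisection diagram describing $\overline{B_{+4}}$, whose boundary is $L(4,-1)$ and whose induced annular open book has monodromy $\tau_C^{-4}$. In the language of Subsection~\ref{subsec:annular}, reversing orientation of a diagram reverses the direction of the Dehn twist in the monodromy, so $\overline{\DD_{B_{+4}}}$ is $(-4)$--annular. Thus the two diagrams we wish to glue have monodromies $\tau_C^{+4}$ and $\tau_C^{-4}$, which is one step outside the hypothesis of Lemma~\ref{lem:compatible} as stated.

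The main point is therefore a mild extension of Lemma~\ref{lem:compatible}: a $(+p)$--annular diagram and a $(-p)$--annular diagram are gluing compatible. This follows by the same argument as in Lemma~\ref{lem:compatible}, with one crucial sign: if $\psi: P_{\DD^\circ} \to P_{\overline{\DD_{B_{+4}}}}$ is any orientation-reversing diffeomorphism of annuli, then $\psi \tau_C^{4} \psi^{-1} = \tau_{\psi(C)}^{-4}$, and since $\psi(C)$ is isotopic to the unique core $C'$ of the target annulus, this agrees with the monodromy $\tau_{C'}^{-4}$ of $\overline{\DD_{B_{+4}}}$. The same argument as in Lemma~\ref{lem:compatible} then shows that $\psi$ can be chosen (up to boundary-preserving isotopy) to take $\frak a$ to $\frak a'$, and that the induced gluing $\Upsilon(\DD^\circ,\overline{\DD_{B_{+4}}}) : L(4,1) \to L(4,-1)$ is well-defined up to isotopy independent of the choice of arcs.

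With gluing compatibility established, Proposition~\ref{prop:glue} immediately implies that $\DD^\circ \cup \overline{\DD_{B_{+4}}}$ is a trisection diagram for the closed $4$--manifold $X(\DD^\circ) \cup_\Upsilon X(\overline{\DD_{B_{+4}}}) = X \cup_\Upsilon \overline{B_{+4}}$. To identify this with the canonical manifold $X \cup \overline{B_{+4}}$ of the lemma's statement -- that is, to show the answer is insensitive to which orientation-reversing diffeomorphism $L(4,1)\to L(4,-1)$ is used -- I would appeal to the theorem of Fintushel and Stern~\cite{FinSte_Rational-blowdowns_97} that every self-diffeomorphism of $L(4,\pm 1)$ extends across $B_{\pm 4}$. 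This implies that any two gluings of $X$ and $\overline{B_{+4}}$ along orientation-reversing boundary diffeomorphisms produce diffeomorphic $4$--manifolds, matching the manifold defined in Subsection~\ref{subsec:surgery}.

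The second half of the lemma, for $\DD^\circ$ being $(-4)$--annular and glued with $\DD_{B_{+4}}$, is proved by the identical argument with the signs of $p$ swapped; the only place orientation enters the argument is in the direction of the Dehn twist, and this simply interchanges the roles of $B_{+4}$ and $B_{-4}$. The main obstacle throughout is the careful bookkeeping of orientations of the lens space boundaries, given that $L(4,1)$ admits no orientation-reversing self-diffeomorphism; the use of the mirror diagram $\overline{\DD_{B_{+4}}}$ (equivalently, $\DD_{B_{-4}}$) is precisely what accommodates this.
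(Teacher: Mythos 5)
Your proof is correct and takes essentially the same route as the paper's, but you handle a sign issue more carefully than the printed proof does, and that care is warranted.

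The paper's proof of this lemma asserts that $\DD^\circ$ and $\overline{\DD_{B_{+4}}} = \DD_{B_{-4}}$ are \emph{both} $(+4)$--annular and then invokes Lemma~\ref{lem:compatible} directly. But this contradicts the paper's own Subsection~\ref{subsec:examples}, where $\DD_{B_{-4}} = \overline{\DD_\Cc^\circ}$ is explicitly stated to be $(-4)$--annular. Your reading (mirroring reverses the twist direction, so the mirror diagram is $(-4)$--annular) matches Subsection~\ref{subsec:examples} and is the internally consistent one. You then correctly observe that Lemma~\ref{lem:compatible} as literally stated (``two $p$--annular diagrams'' for a fixed $p$) does not cover the pairing of a $(+4)$--annular diagram with a $(-4)$--annular one, and you supply the one-line extension needed: an orientation-reversing $\psi$ conjugates $\tau_C^{p}$ to $\tau_{C'}^{-p}$, so $(+p)$ and $(-p)$ are exactly the monodromies that commute through $\psi$. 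Notably, the paper already relies on this same extension without comment in the proof of Lemma~\ref{lem:blowdown}, where Lemma~\ref{lem:compatible} is applied to a $(+1)$--annular diagram and a $(-1)$--annular diagram. So your ``mild extension'' is really the correct general form of the compatibility lemma in this setting. The remainder of your argument (apply Proposition~\ref{prop:glue}, then cite Fintushel--Stern for uniqueness of the filling) is identical to the paper's.

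Two minor points. First, the conclusion of your compatibility discussion could be stated slightly more sharply: once $\psi$ is taken to be orientation-reversing, the condition $\psi \circ \phi_\DD = \phi_{\DD'} \circ \psi$ forces $\phi_{\DD'} = \tau_{C'}^{-p}$, so opposite-sign annularity is not just sufficient but necessary for compatibility when $p \neq 0$; this explains why $L(4,1)$ can only be glued to $L(4,-1)$, which you already noted. Second, in the statement being proved there is a small typo (``Similarly, $\partial X = L(4,-1)$ boundary'' should read ``Similarly, if $\partial X = L(4,-1)$''), but this does not affect either proof.
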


\begin{proof}
	Suppose $\partial X = L(4,1)$. Since $\DD^\circ$ and $\overline\DD_{B_{+4}} = \DD_{B_{-4}}$ are both $(+4)$--annular arced trisection diagram, they are gluing compatible by Lemma~\ref{lem:compatible}.  So, the diagram $\DD'$ describes $X$ by Proposition~\ref{prop:glue}.  Similarly, if $\partial X = L(4,-1)$, then $\DD^\circ\cup\DD_{B_{+4}}$ describes $X\cup B_{+4}$, as desired.  Note that these gluings are unique by~\cite{FinSte_Rational-blowdowns_97}.
\end{proof}

\begin{figure}[h!]
	\centering
	\includegraphics[width=.75\textwidth]{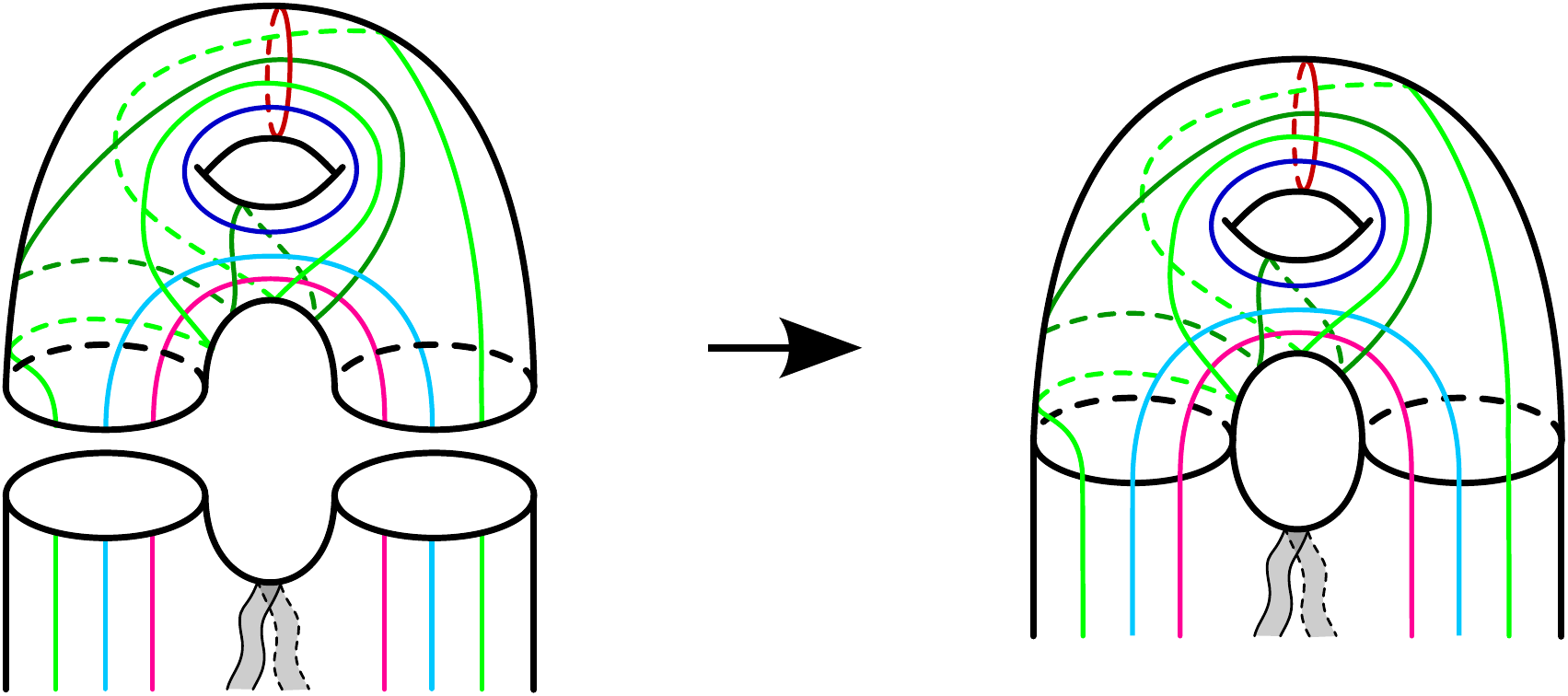}
	\caption{The left two images are the $(+4)$--annular arced trisection diagrams $\DD^\circ$ and $\DD_{B_{-4}}$, corresponding respectively to a 4--manifold $X$ with $\partial X = L(4,1)$ and the rational homology 4--ball $B_{-4}$, as in Lemma~\ref{lem:rational}.  The right image shows the trisection diagram $\DD'$ obtained by gluing these diagrams together, which describes the 4--manifold $X\cup B_{-4}$.}
	\label{fig:rational_sequence}
\end{figure}

Next, we consider the case corresponding to a blowdown on a 2--knot with self-intersection $\pm 1$. In order to state the next two lemmata, we need to introduce new objects, which we call \emph{twisted annuli} (more precisely, \emph{$\frak a$--twisted}, \emph{$\frak b$--twisted}, or \emph{$\frak c$--twisted annuli}, as the case may be), and which act as arced trisection diagrams, though they are not. The twisted annuli are denoted $\DD_\frak a$, $\DD_\frak b$, and $\DD_\frak c$ and are shown in Figure~\ref{fig:twisteds}.

\begin{figure}[h!]
	\centering
	\includegraphics[width=.75\textwidth]{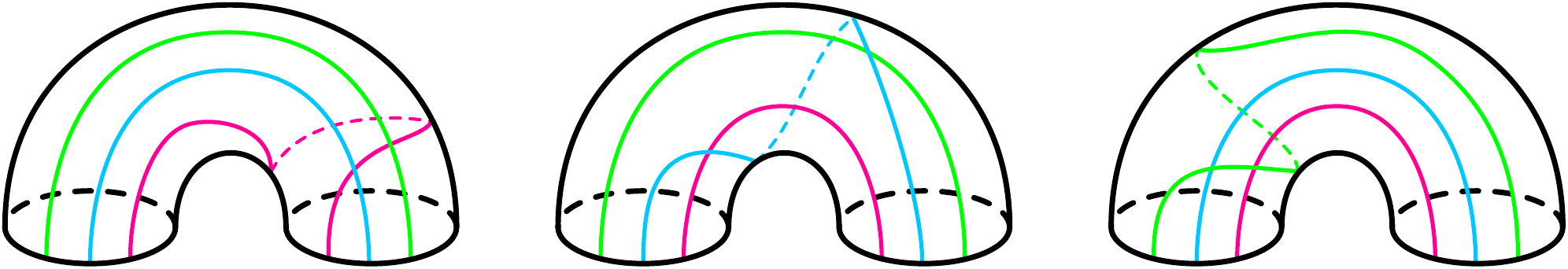}
	\caption{From left to right, the $\frak a$--, $\frak b$--, and $\frak c$--twisted annuli $\DD_\frak a$, $\DD_\frak b$, and $\DD_\frak c$, which are \emph{not} arced trisection diagrams.}
	\label{fig:twisteds}
\end{figure}

An arced trisection diagram has the property that the the red-arcs are slide-equivalent to the blue-arcs, which are slide-equivalent to the green-arcs.  Thus, an arced trisection diagram on the annulus must contain three parallel arcs, one of each color; i.e., it must be the arced trisection diagram $\DD_{B^3\times S^1}$ for $B^3\times S^1$.  Although the twisted annuli are not arced trisection diagrams, they will feature in our diagrammatic results as though they were.
We refer the reader to Figure~\ref{fig:blowdown_sequence} for an illustration of our third lemma, which corresponds to the case of a blowdown on a 2--knot with self-intersection $\pm 1$.

\begin{lemma}
\label{lem:blowdown}
	Let $\DD^\circ$ be a $(+1)$--annular arced trisection diagram for a 4--manifold $X$ with $\partial X = S^3$. Let $\DD_\frak a$ be the $\frak a$--twisted annulus.  Then, the union
	$$\DD' = \DD^\circ\cup\DD_\frak a$$
	is a trisection diagram for 4--manifold $X\cup B^4$.  Similarly, if $\DD^\circ$ is $(-1)$--annular, then the diagram
	$$\DD' = \DD^\circ\cup\overline\DD_\frak a$$
	describes $X\cup B^4$.
\end{lemma}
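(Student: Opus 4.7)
The plan is to mimic the strategy of Lemmas~\ref{lem:surgery} and~\ref{lem:rational}, with the additional step of recognizing the twisted annulus $\DD_\frak a$ as a combinatorial shorthand for an honest $(+1)$-annular arced relative trisection diagram of $B^4$. By Theorem~\ref{thm:relexistunique}, $B^4$ admits a relative trisection inducing the $(+1)$-annular open book on $S^3 = \partial B^4$ (page an annulus, monodromy $\tau_C$); let $\DD_{B^4}$ be an associated arced relative trisection diagram. Then $\DD^\circ$ and $\DD_{B^4}$ are both $(+1)$-annular arced diagrams, so by Lemma~\ref{lem:compatible} they are gluing compatible. Proposition~\ref{prop:glue} then yields that the closed trisection diagram $\DD^\circ \cup \DD_{B^4}$ describes the 4--manifold $X \cup B^4$; since every self-diffeomorphism of $S^3$ extends over $B^4$, the choice of boundary identification does not affect the diffeomorphism type of the result.

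The heart of the proof is the combinatorial claim that the closed diagram $\DD^\circ \cup \DD_{B^4}$ is slide-diffeomorphic to $\DD^\circ \cup \DD_\frak a$. The twisted annulus $\DD_\frak a$ is designed so that, upon gluing its boundary to the annular boundary of $\DD^\circ$ by matching the $\frak a, \frak b, \frak c$ endpoints by color, the arcs concatenate into new closed curves on the resulting surface of genus $g+1$, with the twist in $\frak a$ precisely encoding the action of $\tau_C$ that witnesses the $(+1)$-annular monodromy. A direct inspection of the resulting closed tuple $(\Sigma', \alpha', \beta', \gamma')$, and matching each pair against the trivial $(g+1, k_i')$--diagrams for $(k_1', k_2', k_3') = (k_1, k_2+1, k_3)$, confirms the claimed trisection type and realizes the cut systems as arising from the shorthand of $\DD_\frak a$.

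The mirror case, in which $\DD^\circ$ is $(-1)$-annular and we glue $\overline{\DD_\frak a}$, follows by applying the above argument after reversing orientations: a $(-1)$-blowdown caps $\partial E_\Kk \cong -S^3$ with $B^4$ equipped with the reversed orientation, the monodromy becomes $\tau_C^{-1}$, and the corresponding combinatorial shorthand is the mirrored twisted annulus $\overline{\DD_\frak a}$.

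I expect the principal obstacle to be the combinatorial verification just described: one must carefully analyze how the twisted arc of $\DD_\frak a$ concatenates with the arcs of $\DD^\circ$ across the gluing seam to produce the correct $\alpha, \beta, \gamma$ cut systems on the closed surface, and then confirm the pairwise standardness conditions of Definition~\ref{def:closed} against the prescribed trisection type. Given the close parallel with the arguments for sphere surgery and rational blowdown and the controlled nature of the $(+1)$-annular structure, this check should be finite and tractable.
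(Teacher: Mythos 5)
Your overall plan follows the paper's strategy: glue an honest arced diagram for $B^4$ onto $\DD^\circ$ using Lemma~\ref{lem:compatible} and Proposition~\ref{prop:glue}, then argue that the resulting closed diagram simplifies to $\DD^\circ \cup \DD_\frak a$.  The first stage of that plan is correct, and you are right to flag that the twisted annulus $\DD_\frak a$ is not itself an arced trisection diagram, so Proposition~\ref{prop:glue} cannot be applied to it directly.

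However, there is a genuine gap at the ``heart'' you identify, and it is larger than you suggest.  First, you invoke Theorem~\ref{thm:relexistunique} only to get an abstract $\DD_{B^4}$; the subsequent ``direct inspection'' is impossible without a concrete diagram in hand.  The paper takes $\DD_{B^4}$ to be the explicit genus-one toroidal diagram $\overline\DD_{\Ll}$ coming from the projective line exterior (second row of Figure~\ref{fig:1b_Comps}); with that choice the closed diagram $\DD^\circ \cup \overline\DD_{\Ll}$ has genus $g+2$, and one must perform a specific Dehn twist plus two handleslides and then destabilize once to reach the genus-$(g+1)$ diagram $\DD^\circ\cup\DD_\frak a$ (Figure~\ref{fig:blowdown_sequence}).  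Second, and more importantly, your proposed verification is aimed at the wrong target: checking that $\DD^\circ\cup\DD_\frak a$ satisfies the pairwise standardness conditions and has type $(g+1;k_1,k_2+1,k_3)$ only shows that it is \emph{some} trisection diagram of that type.  It does not show that it describes $X\cup B^4$ rather than a different 4--manifold of the same trisection type.  The diffeomorphism with $X\cup B^4$ is established only by the explicit sequence of slide-diffeomorphisms and destabilizations connecting $\DD^\circ\cup\overline\DD_{\Ll}$ to $\DD^\circ\cup\DD_\frak a$, and this move sequence is precisely the content that your proposal leaves unsupplied.  (The $(-1)$--annular case, where the roles of $\overline\DD_\Ll$ and $\DD_\frak a$ are replaced by $\DD_\Ll$ and $\overline\DD_\frak a$, is handled correctly in spirit, once the $(+1)$ case is actually proved.)
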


\begin{proof}
	Suppose $\DD^\circ$ is a $(+1)$--annular arced trisection diagram. Let $\overline\DD_{\Ll}$ denote the toroidal, $(-1)$--annular arced trisection diagram for $B^4$ shown in the top left of Figure~\ref{fig:blowdown_sequence}.  By Lemma~\ref{lem:compatible}, $\DD^\circ$ and $\overline\DD_{\Ll}$ are gluing compatible.  So, the diagram $\DD''$ shown in the top middle of Figure~\ref{fig:blowdown_sequence} describes $X\cup B^4$.

\begin{figure}[h!]
	\centering
	\includegraphics[width=.9\textwidth]{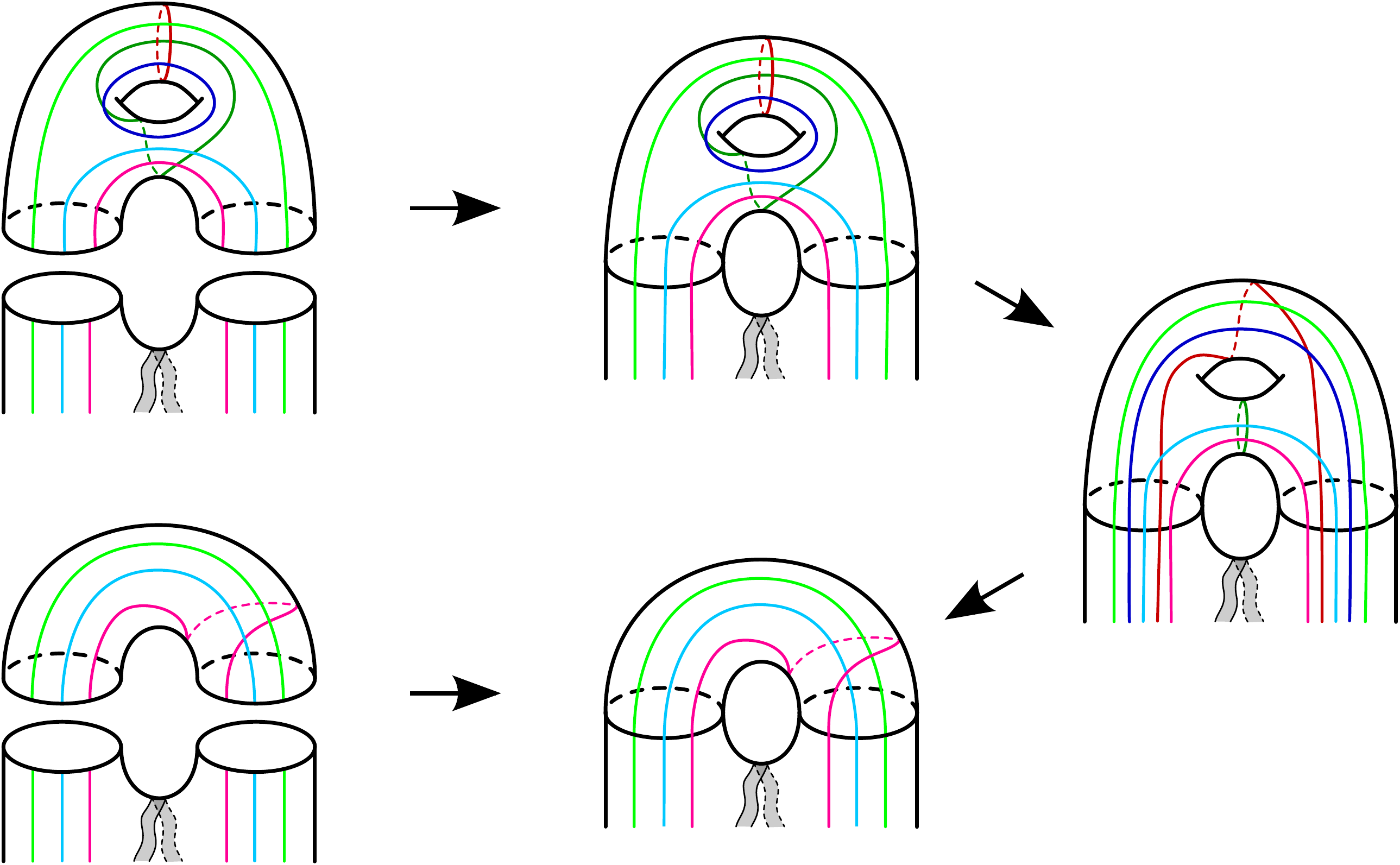}
	\caption{The top two images are the arced trisection diagrams $\DD^\circ$ and $\overline\DD_{\Ll}$, corresponding respectively to a 4--manifold $X$ with $\partial X\cong S^3$ and  $B^4$, and the trisection diagram $\DD''$ that results from their gluing. Similarly, the bottom two figures show the same thing, except with $\DD_\frak a$ replacing $\overline\DD_\Ll$.  The right-most trisection diagram is slide-diffeomorphic to the top middle one via a single Dehn twist about the central hole and two slides and the destabilizes to the bottom middle one.}
	\label{fig:blowdown_sequence}
\end{figure}

	Now, since $\DD^\circ$ is, by assumption, an annular arced trisection diagram we can assume that, away from the local pictures show in Figure~\ref{fig:blowdown_sequence}, the pink arc/curve is parallel to the light blue arc/curve.  Our goal is to destabilize $\DD''$ using this common $\alpha/\beta$--curve.  First, we do three things.  Modify $\DD''$ by performing a left-handed Dehn twist along the dark blue curve.  This has the effect of ``straightening out'' the dark green curve, and ``twisting'' the dark red curve.  Next, slide the dark blue curve over the light blue curve.  This will cause it to exit the local picture.  Finally, slide the dark red curve over the pink curve.  The end result is the diagram shown on the right side of Figure~\ref{fig:blowdown_sequence}.  This diagram can be destabilized by surgering the surface along the dark green curve and deleting the light blue curve and the pink curve, which were parallel.  The resulting diagram, after lightening the shades of the remaining red and blue curves, is shown in the bottom middle of the figure.  Note however, that this diagram is precisely the desired diagram $\DD'$, completing the proof in the case of self-intersection~+1.
	
	  The same proof works for the case that $\DD^\circ$ is $(-1)$--annular, except that the role of $\overline\DD_{\Ll}$ is played by $\DD_{\Ll}$ and the role of $\DD_\frak a$ is played by $\overline\DD_\frak a$. (The left-handed Dehn twist used in the modification of $\DD''$ will be right-handed, this time.)
\end{proof}

\begin{remark}
\label{rmk:twisted_blowdown}
	Lemma~\ref{lem:blowdown} is also valid if the $\frak c$--twisted annulus is used instead of the $\frak a$--twisted annulus.  More precisely, the lemma holds if every instance of $\DD_\frak a$ (respectively, $\overline\DD_\frak a$) is replaced with $\overline\DD_\frak c$ (respectively, $\DD_\frak c$).  There are a number of ways to see this.  One is that there is an orientation-reversing diffeomorphism of $B^4$ that has the effect of interchanging the red and green arcs and curves in the diagrams $\overline\DD_{\Ll}$ and $\DD_{\Ll}$.  Alternatively, in the proof of the lemma, the destabilization could be done using the dark red curve and the light green and light blue arcs, instead of the dark green curve and the pink and light blue arcs.
	
	We do not have a rendering of Lemma~\ref{lem:blowdown} that involves $\DD_\frak b$; this is an artifact of the asymmetry in the definition of an arced trisection diagram -- namely, that the $\frak a$--arcs cannot be made to coincide with the $\frak c$--arcs when the monodromy of the induced open book decomposition is not the identity.
\end{remark}

Finally, we arrive at the most complicated case: that corresponding to Gluck surgery.  In this case, we will make use of two different gluings of arced trisection diagrams to account for the two possible gluings of $S^2\times D^2$ to a 4--manifold $X$ with $\partial X = S^2\times S^1$. We refer the reader to Figures~\ref{fig:identity_sequence} and~\ref{fig:gluck_sequence} for illustrations of our fourth lemma.  In this lemma, $\DD_\text{caps}$ diagram consisting of two disjoint disks, drawn as caps, as in the right-most graphic in Figure~\ref{fig:identity_sequence}.  Note that $\DD_\text{caps}$ is not an honest trisection diagram of any sort.

\begin{lemma}
\label{lem:gluck}
	Let $\DD^\circ$ be a $0$--annular arced trisection diagram for a 4--manifold $X$ with $\partial X = S^2\times S^1$.  Then, the unions
	$$\DD' = \DD^\circ\cup\DD_\text{caps},$$
	and
	$$\DD'' =\DD^\circ\cup\DD_\frak a$$
	are trisection diagrams for 4--manifolds $X'$ and $X''$ satisfying
	$$\{X',X''\} = \{X\cup_\id S^2\times D^2, X\cup_\tau S^2\times D^2\}.$$
\end{lemma}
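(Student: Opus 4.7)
My plan is to handle the two assertions of Lemma~\ref{lem:gluck} by expressing each as the gluing of $\DD^\circ$ with a bona fide arced trisection diagram for $S^2\times D^2$, and then reducing via handleslides and destabilizations to the claimed forms $\DD_\text{caps}$ and $\DD_\frak a$. The essential input is the pair of $0$--annular arced trisection diagrams $\DD_\Ff$ and $\DD_{\widetilde\Ff}$ for $S^2\times D^2$ supplied by the last two rows of Figure~\ref{fig:1b_Comps}, which encode two relative trisections of $S^2\times D^2$ whose underlying relative diagrams are genuinely different---reflecting the fact that $\Ff\subset S^2\times S^2$ and $\widetilde\Ff\subset S^2\wt\times S^2$ are Gluck twins sharing the common exterior $S^2\times D^2$.

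For the first assertion, $\DD' = \DD^\circ\cup\DD_\text{caps}$: by Lemma~\ref{lem:compatible} the diagram $\DD_\Ff$ is gluing compatible with $\DD^\circ$, and Proposition~\ref{prop:glue} identifies $\DD^\circ\cup\DD_\Ff$ as a trisection diagram for $X\cup_{\Upsilon_1}(S^2\times D^2)$ for some gluing $\Upsilon_1\colon S^2\times S^1\to S^2\times S^1$. A sequence of handleslides and destabilizations, in the spirit of the reduction carried out in the proof of Lemma~\ref{lem:blowdown} and its Figure~\ref{fig:blowdown_sequence}, should collapse the $\DD_\Ff$ portion entirely, leaving only the two boundary--capping disks with no residual curves or arcs. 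The result is exactly $\DD^\circ\cup\DD_\text{caps}$, which therefore describes the same $4$--manifold. (One sanity check: the genus of $\DD^\circ\cup\DD_\text{caps}$ is $g$, matching the fact that this filling is expected to recover the original ambient $4$--manifold trisected at genus $g$.)

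For the second assertion, $\DD'' = \DD^\circ\cup\DD_\frak a$: the parallel argument applies with $\DD_{\widetilde\Ff}$ in place of $\DD_\Ff$, exhibiting $\DD^\circ\cup\DD_{\widetilde\Ff}$ as a trisection diagram for $X\cup_{\Upsilon_2}(S^2\times D^2)$. Gluck's theorem that $\id$ and $\tau$ are the only two self--diffeomorphisms of $S^2\times S^1$ up to extension over $S^2\times D^2$ forces $\{\Upsilon_1,\Upsilon_2\}=\{\id,\tau\}$ up to isotopy, since the two relative diagrams for $S^2\times D^2$ are inequivalent. The analogous handleslide--and--destabilization reduction of $\DD_{\widetilde\Ff}$ should now collapse the diagram not all the way down to caps but only as far as the $\frak a$--twisted annulus $\DD_\frak a$; the single surviving twist in the $\frak a$--arc is the combinatorial record of the nontriviality of the $S^2\wt\times S^2$ bundle structure, and accounts for the $(+1,+1,+1,+1)$ jump in trisection parameters predicted by Theorem~\ref{thm:surgery}(2).

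The main obstacle is carrying out the two reductions carefully enough to verify the asymmetry between them: in the $\Ff$ case the sequence of handleslides must destabilize to nothing, while in the $\widetilde\Ff$ case it must terminate at precisely one boundary--parallel Dehn twist's worth of residual arc winding. Tracking this boundary--parallel twist through the handleslide sequence is exactly the combinatorial witness that the two gluings differ by the Gluck twist rather than being isotopic. Once this is done, matching $\DD_\text{caps}$ with the identity gluing and $\DD_\frak a$ with $\tau$ is fixed (up to swapping the two labels, which is the indeterminacy allowed by the statement of the lemma), completing the proof.
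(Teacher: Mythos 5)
Your proposal follows the same outline as the paper's proof: factor each gluing through the genuine arced diagrams $\DD_\Ff$ and $\DD_{\widetilde\Ff}$ for $S^2\times D^2$ (from Figure~\ref{fig:1b_Comps}), reduce $\DD^\circ\cup\DD_\Ff$ to $\DD^\circ\cup\DD_\text{caps}$ and $\DD^\circ\cup\DD_{\widetilde\Ff}$ to $\DD^\circ\cup\DD_\frak a$ by handleslides and destabilizations, and then argue the two gluings are genuinely different. The two diagrammatic reductions you describe correctly in spirit, and the paper carries them out explicitly in Figures~\ref{fig:identity_moves} and~\ref{fig:gluck_moves}.

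However, your final step has a real gap. To conclude $\{X', X''\} = \{X\cup_\id S^2\times D^2, X\cup_\tau S^2\times D^2\}$ you assert that $\Upsilon(\DD^\circ,\DD_\Ff)$ and $\Upsilon(\DD^\circ,\DD_{\widetilde\Ff})$ must be inequivalent modulo extension over $S^2\times D^2$ ``since the two relative diagrams for $S^2\times D^2$ are inequivalent.'' That inference is not valid: two inequivalent relative trisection diagrams for the same $4$--manifold can induce the same boundary parameterization (up to modification by maps that extend across the filling), so inequivalence of the diagrams does not by itself prevent the two gluing maps from coinciding. Similarly, pointing to a ``residual twist'' as a ``combinatorial witness'' is suggestive but does not prove the gluings differ.

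What is actually needed, and what the paper supplies, is the composition rule of Lemma~\ref{lem:composition}: $\Upsilon(\DD^\circ,\DD_{\widetilde\Ff}) = \Upsilon(\DD_\Ff,\DD_{\widetilde\Ff}) \circ \Upsilon(\DD^\circ,\DD_\Ff)$. This reduces the distinctness question, independently of $\DD^\circ$, to a single computation: that $\Upsilon(\DD_\Ff,\DD_{\widetilde\Ff}) = \tau$. The paper verifies this (Figure~\ref{fig:gluck_gluing}) by checking that the closed trisection diagram $\DD_\Ff\cup\DD_{\widetilde\Ff}$ describes $S^2\wt\times S^2$ rather than $S^2\times S^2$, which pins down the difference of gluings as exactly the Gluck twist. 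You need to insert this argument; without it, nothing in your proof rules out the possibility that $X' \cong X''$ for an arbitrary $\DD^\circ$.
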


\begin{proof}
	Consider the arced trisection diagrams $\DD_\Ff$ and $\DD_{\widetilde\Ff}$ corresponding to the exteriors of the fiber 2--knots $\Ff$ and $\widetilde\Ff$ in  $S^2\times S^2$ and $S^2\wt\times S^2$, respectively. These diagrams are shown in the top left of Figures~\ref{fig:identity_sequence} and~\ref{fig:gluck_sequence}, respectively.  As $0$--annular arced trisection diagrams,  $\DD^\circ$, $\DD_\Ff$, and $\DD_{\widetilde\Ff}$ are pairwise gluing compatible by Lemma~\ref{lem:compatible}.  There are three things to be shown: that gluing on $\DD_\text{caps}$ corresponds to gluing on $\DD_\Ff$, that gluing on $\DD_\frak a$ corresponds to gluing on $\DD_{\widetilde\Ff}$, and that the gluing diffeomorphisms $\Upsilon(\DD^\circ,\DD_\Ff)$ and $\Upsilon(\DD^\circ,\DD_{\widetilde\Ff})$ are distinct in the mapping class group $\Mm(S^2\times S^1)$.  The statements of the first two claims are illustrated by Figures~\ref{fig:identity_sequence} and~\ref{fig:gluck_sequence}, and their proofs are exhibited in Figures~\ref{fig:identity_moves} and~\ref{fig:gluck_moves}.

\begin{figure}[h!]
	\centering
	\includegraphics[width=.9\textwidth]{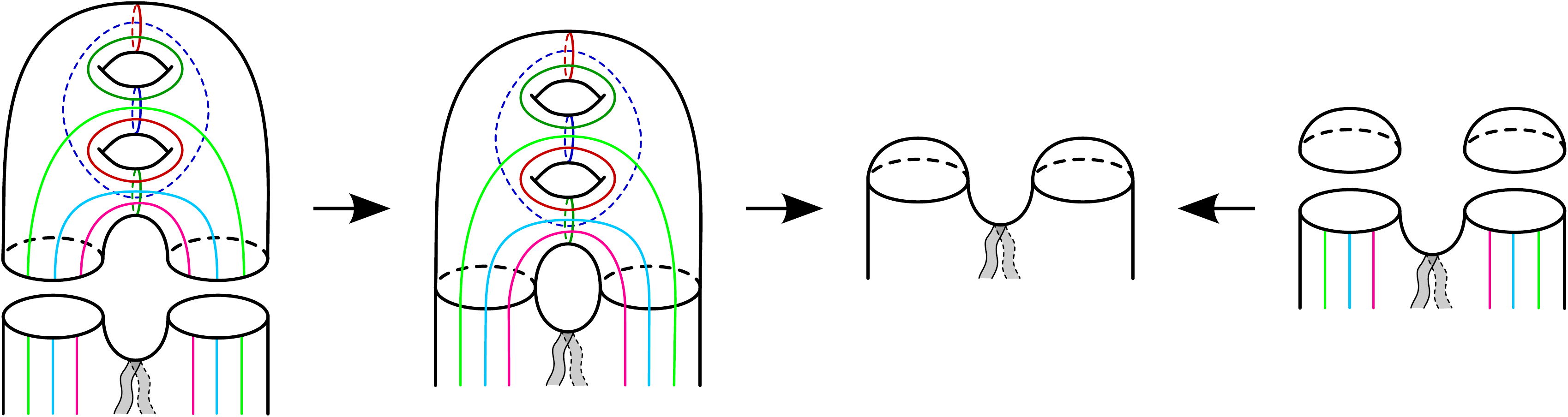}
	\caption{The left two images are the arced trisection diagrams $\DD^\circ$ and $\DD_\Ff$, corresponding respectively to a 4--manifold $X$ with $\partial X = S^2\times S^1$ and the exterior $S^2\times D^2$ of the fiber 2--knot $\Ff$ in $S^2\times S^2$, and the result of their gluing.  Similarly, the right two figures show the same thing, except with $\DD_\text{caps}$ replacing $\DD_\Ff$.  The equivalence of the middle arrow is the content of Figure~\ref{fig:identity_moves}.}
	\label{fig:identity_sequence}
\end{figure}

\begin{figure}[h!]
	\centering
	\includegraphics[width=.9\textwidth]{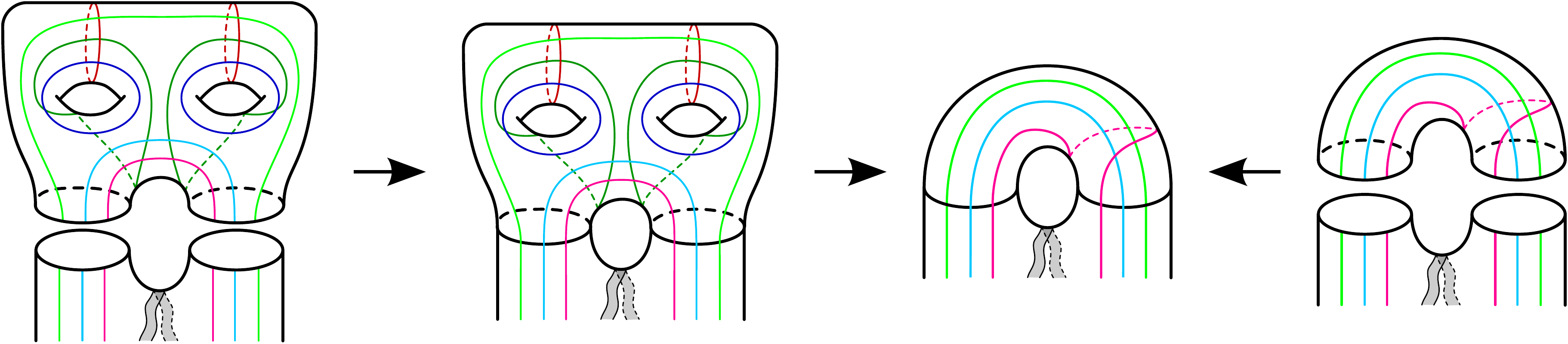}
	\caption{The left two images are the arced trisection diagrams $\DD^\circ$ and $\DD_{\widetilde\Ff}$, corresponding respectively to a 4--manifold $X$ with $\partial X = S^2\times S^1$ and the exterior $S^2\times D^2$ of the fiber 2--knot $\widetilde\Ff$ in $S^2\wt\times S^2$, and the result of their gluing.  Similarly, the right two figures show the same thing, except with $\DD_\frak a$ replacing $\DD_{\widetilde\Ff}$.  The equivalence of the middle arrow is the content of Figure~\ref{fig:gluck_moves}.}
	\label{fig:gluck_sequence}
\end{figure}

	First, we will show that the union $\DD^\circ\cup\DD_\Ff$ is equivalent to the union $\DD^\circ\cup\DD_\text{caps}$, as illustrated by Figure~\ref{fig:identity_sequence}.  To do this, we must justify that the middle arrow relating the two objects on the right of this figure comes from a pair of destabilizations.  This justification is provided by the sequence of five moves given in Figure~\ref{fig:identity_moves}. First, slide the dark blue and dark green curves over their lighter shaded companions.  Second, we destabilize the dark red curve using the dark blue and dark green curves, which can be assumed to be parallel outside of the local picture, because $\DD^\circ$ is annular.  Third, we slide the dark red curve over the pink curve. Fourth, we destabilize the dark blue curve using the dark red curve and the light green curve, which can be assumed to be parallel away from the local picture, as before. Fifth, we destabilize the dark green curve using the pink and light blue curves, which can be assumed to be parallel outside of the local picture, as before. This completes the claim that the proposed gluing of $S^2\times D^2$ to $X$ coming from $\DD_\Ff$ can be achieved using $\DD_\text{caps}$.

\begin{figure}[h!]
	\centering
	\includegraphics[width=.9\textwidth]{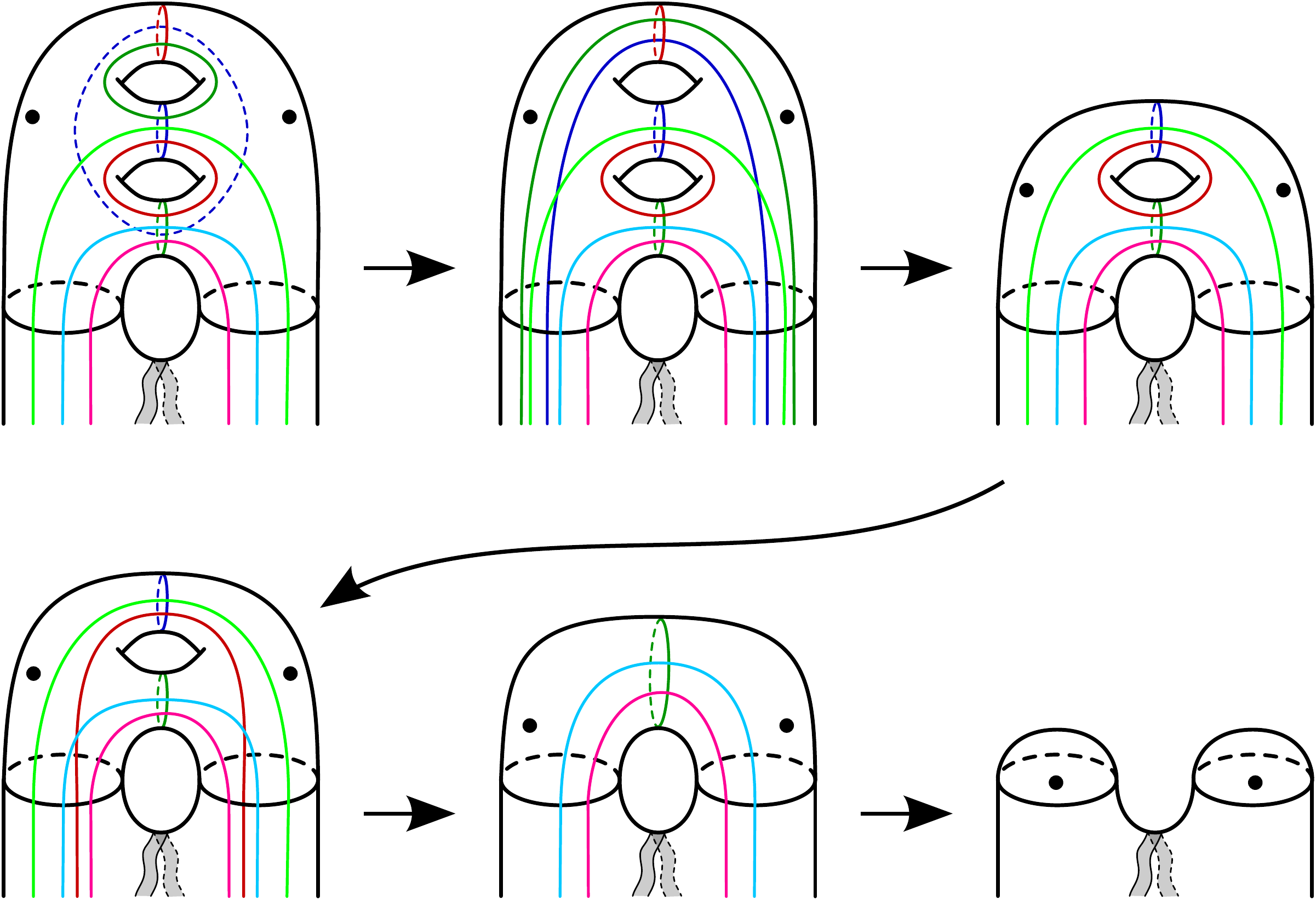}
	\caption{A sequence of handleslides and destabilizations taking $\DD^\circ\cup\DD_\Ff$ to $\DD^\circ\cup\DD_\text{caps}$.  Important is the fact that the original pink, light blue, and light green curves can be assumed, in turn, to be pairwise parallel outside of the local picture. The double points represent the 2--knot $S^2\times\{pt\}$ inside the $S^2\times D^2$ piece described, at first, by $\DD_\Ff$.}
	\label{fig:identity_moves}
\end{figure}
	
	Second, we will show that the union $\DD^\circ\cup\DD_{\widetilde\Ff}$ is equivalent to the union $\DD^\circ\cup\DD_\frak a$, as illustrated by Figure~\ref{fig:gluck_sequence}.  To do this, we must justify that the middle arrow relating the two objects on the right of this figure comes from a pair of destabilizations.  This justification is provided by the sequence of five moves given in Figure~\ref{fig:gluck_moves}. First, we do a left-handed Dehn twist about the left dark blue curve.  Second, we slide the left dark red and dark blue curves over their lighter-shaded companions.  Third, we destabilize the left dark green curve using the parallel pink and light blue curves. Here we are using the fact that the pink and light blue curve can be assumed to be parallel away from the local picture, since they come from the arcs of an annular arced trisection diagram. The result is the lower genus diagram shown in the bottom-left of the figure, where the dark blue and dark red curves running out of the local picture have been lightened.  Fourth, we dark green and dark red curves over their lighter-shaded companions. Fifth, we destabilize the dark blue curve using the parallel dark red and dark green curves.  Here we are using the fact that the light blue and light green curves can be assumed to be parallel away from the local picture, since they come from the arcs in an annular arced trisection diagram.  This completes the claim that the proposed gluing of $S^2\times D^2$ to $X$ coming from $\DD_{\widetilde\Ff}$ can be achieved using $\DD_\frak a$.

\begin{figure}[h!]
	\centering
	\includegraphics[width=.9\textwidth]{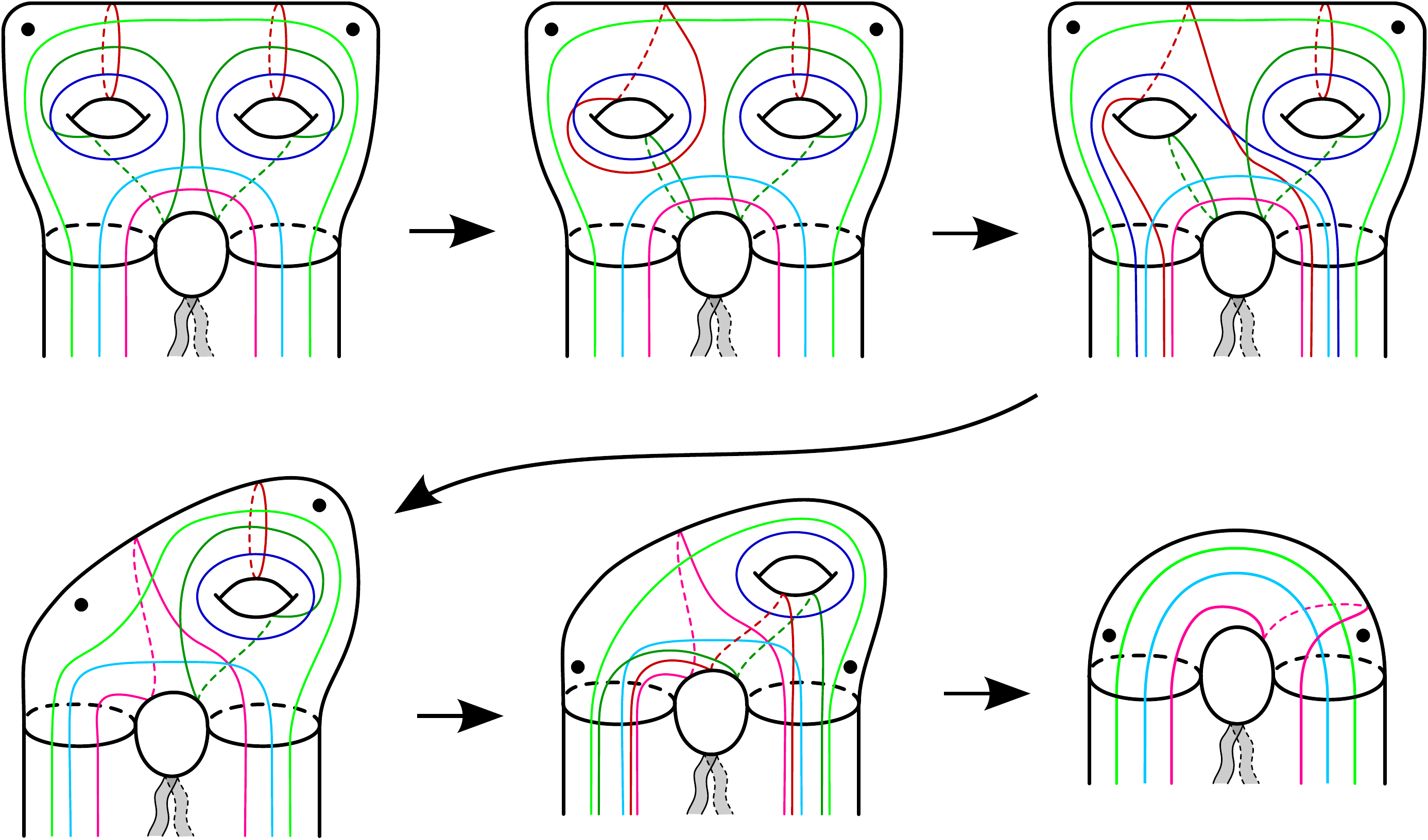}
	\caption{A sequence of slide-diffeomorphisms and destabilizations taking $\DD^\circ\cup\DD_{\widetilde\Ff}$ to $\DD^\circ\cup\DD_\frak a$.  Important is the fact that the original pink curve can be assumed to be parallel, in turn, to the original light blue curve or the original light green curve outside of the local picture.  The double points represent the core $S^2\times\{pt\}$ inside the $S^2\times D^2$ piece described, at first, by $\DD_{\widetilde\Ff}$.}
	\label{fig:gluck_moves}
\end{figure}

	Finally, we claim that the two gluings analyzed above account for both elements of $\Mm(S^2\times S^1)$.  By Lemma~\ref{lem:composition}, we have
$$\Upsilon\left(\DD^\circ,\DD_\Ff\right)\circ\Upsilon\left(\DD_\Ff,\DD_{\widetilde\Ff}\right) = \Upsilon\left(\DD^\circ,\DD_{\widetilde\Ff}\right).$$
We claim that $\Upsilon\left(\DD_\Ff,\DD_{\widetilde\Ff}\right) = \tau$, which implies that $\Upsilon\left(\DD^\circ,\DD_\Ff\right)$ and $\Upsilon\left(\DD^\circ,\DD_{\widetilde\Ff}\right)$ are distinct mapping classes, as desired.  To see that this claim is true, refer to Figure~\ref{fig:gluck_gluing}, which shows the trisection diagram $\DD'' = \DD_\Ff\cup\DD_\Ff$ obtained from gluing $\DD_\Ff$ to $\DD_{\widetilde\Ff}$ in the left two figures.  By the first part of the present lemma, $\DD''$ is equivalent to the third diagram, $\DD_{\widetilde\Ff}\cup\DD_\text{caps}$.  Alternatively, one could invoke the second part of the present lemma to conclude that $\DD''$ is equivalent to $\DD_\frak a\cup\DD_\Ff$.  Either of these diagrams can easily be seen to be equivalent to the fourth diagram of the figure, the genus two trisection diagram for $S^2\wt\times S^2$.  Thus, we have exhibited that $\Upsilon\left(\DD_\Ff,\DD_{\widetilde\Ff}\right) = \tau$, as desired.

\begin{figure}[h!]
	\centering
	\includegraphics[width=.9\textwidth]{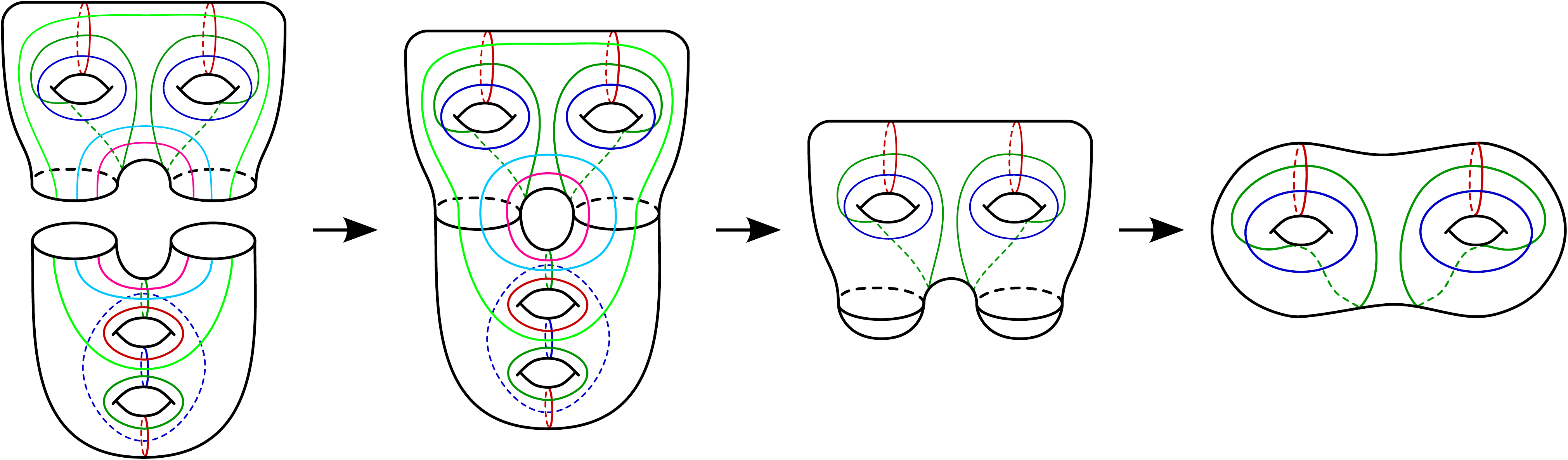}
	\caption{A sequence of diagrams showing that the gluing map $\Upsilon(\DD_\Ff,\DD_{\widetilde\Ff}$ is the Gluck twist $\tau$.}
	\label{fig:gluck_gluing}
\end{figure}

\end{proof}

\begin{remark}
\label{rmk:twisted_gluck}
	The statement of Lemma~\ref{lem:gluck} holds if the role of $\DD_\frak a$ is played by any of the twisted annuli or their mirrors (cf. Remark~\ref{rmk:twisted_blowdown}).
\end{remark}

Finally, we combine the above lemmata to give a proof of Theorem~\ref{thm:surgery}.

\begin{proof}[Proof of Theorem~\ref{thm:surgery}]
	Suppose that $\DD$ is a $(g;k_1,k_2,k_3)$--doubly pointed trisection diagram for a 2--knot $(X,\Kk)$.  If $\Kk$ has self-intersection~$0$, then $\DD^\circ$ satisfies the hypotheses of Lemma~\ref{lem:surgery}, so $\DD^\circ\cup\DD_{B^3\times S^1}$ is a trisection diagram for $X(\Kk)$ of type $(g+1;k_1+1,k_2+1,k_3+1)$.
	
	If $\Kk$ has self-intersection~$\pm 4$, then $\DD^\circ$ satisfies one of the hypotheses of Lemma~\ref{lem:rational}, so $\DD^\circ\cup\DD_{B_{\mp 4}}$ is a trisection diagram for $X_{\pm 4}(\Kk)$ of type $(g+2;k_1+1,k_2+1,k_3+1)$.
	
	If $\Kk$ has self-intersection~$+1$, then $\DD^\circ$ satisfies one of the hypotheses of Lemma~\ref{lem:blowdown}, so $\DD^\circ\cup\DD_\frak a$ is a trisection diagram for $X_{1}(\Kk)$ of type $(g+1;k_1,k_2+1,k_3)$.  Similarly, if $\Kk$ has self-intersection~$-1$, then $\DD^\circ\cup\overline\DD_\frak a$ is a trisection diagram for $X_{-1}(\Kk)$ of type $(g+1;k_1,k_2+1,k_3)$.
	
	If $\Kk$ has self-intersection~$0$, then
	$$\DD^\circ\cup\DD_\text{caps} \text{\ \ \  describes \ \  } E_\Kk\cup_{\Upsilon\left(\DD^\circ,\DD_\Ff\right)}S^2\times D^2,$$
	 and
	$$\DD^\circ\cup\DD_\frak a \text{\ \ \  describes \ \  } E_\Kk\cup_{\Upsilon\left(\DD^\circ,\DD_{\widetilde\Ff}\right)}S^2\times D^2,$$
	by Lemma~\ref{lem:gluck}.  However, by the portion of Lemma~\ref{lem:gluck} shown in Figure~\ref{fig:identity_moves}, $\DD^\circ\cup\DD_\text{caps} = \DD$ is the original trisection diagram for $X$.  It follows that $\Upsilon\left(\DD^\circ, \DD_\Ff\right) = \id$.  Therefore, $\DD^\circ\cup\DD_\frak a$ corresponds to the result of Gluck surgery on $\Kk$ in $X$, as desired.
\end{proof}

\subsection{The dual 2--knot in Gluck surgery}\label{subsec:dual}

In this subsection, we give a slightly more detailed treatment of Gluck surgery; namely, we show how to record information about the dual 2--knot in the Gluck surgery manifold.  The main result is the following.

\begin{reptheorem}{thm:dual}
	Let $\DD^\circ$ be a 0--annular arced diagram for a 4--manifold $E$ with $\partial E\cong S^2\times S^1$.  Then, the diagrams $\DD$ and $\DD'$, as shown in Figure~\ref{fig:twin_diags}, are doubly pointed trisection diagrams for the only 2--knots $(X,\Kk)$ and $(X',\Kk')$ with $E_\Kk\cong E_{\Kk'}\cong E$.
\end{reptheorem}

\begin{proof}
	By Lemma~\ref{lem:gluck}, the underlying trisection diagrams in Figure~\ref{fig:twin_diags} describe the desired ambient 4--manifolds.  The proof of this lemma followed by first gluing on $\DD_\Ff$ and $\DD_{\widetilde\Ff}$, then reducing the result to the unions $\DD^\circ\cup\DD_\text{caps}$ and $\DD^\circ\cup\DD_{\frak a}$, respectively, as illustrated in Figure~\ref{fig:identity_moves} and~\ref{fig:gluck_moves}.  Not relevant at the time, were the double points included in each frame of these figures, which now become relevant.  However, the extra claim here about the cores of these two fillings follow from Lemma~\ref{lem:cores} and by tracing the double points through Figures~\ref{fig:identity_moves} and~\ref{fig:gluck_moves}.
\end{proof}

\section{Examples}\label{sec:examples}

In this section, we apply the techniques and result of this paper to a number of examples.  Many of the 2--knots studied here were first introduced in Subsection~\ref{subsec:examples}.

\begin{example}
\label{ex:unknot}
	Consider the unknot $(S^4, \Uu)$, which has self-intersection zero.  This is the unique 1--bridge 2--knot in $S^4$.   Since this 2--knot is fibered by 3--balls, $E_\Uu \cong B^3\times S^1$.  It follows that the result of sphere surgery on $\Uu$ in $S^4$ is $S^4(\Uu)\cong S^3\times S^1$.  Since the Gluck twist extends over $E_\Uu$, we have that $S^4_*(\Uu)\cong S^4$, as well. Diagrams tracing through these surgery operations are shown in Figure~\ref{fig:unknot}; we see explicitly that the unknot $\Uu$ is its own Gluck-twin.
\end{example}

\begin{figure}[h!]
	\centering
	\includegraphics[width=.9\textwidth]{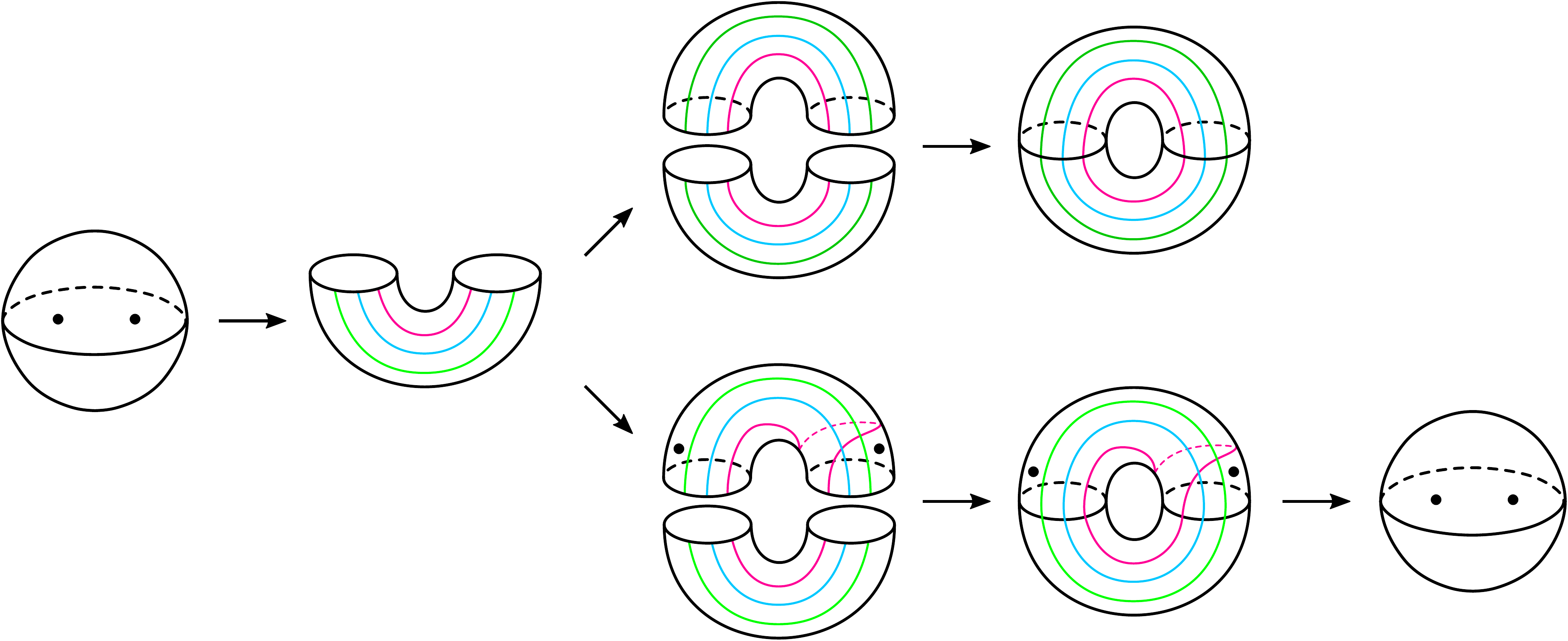}
	\caption{Surgery operations on the unknot $(S^4,\Uu)$.  On the left, we have a doubly pointed, genus zero trisection diagram for this 2--knot, together with the genus zero (annulus) trisection of its exterior $E_\Uu \cong B^3\times S^1$.  The top branch shows the sphere surgery $S^4(\Uu)\cong S^3\times S^1$, while the bottom branch shows the Gluck surgery $S^4_*(\Uu)$.  Note that $(S^4,\Uu)$ is its own Gluck-twin, since the trisection for the Gluck surgery destabilizes.}
	\label{fig:unknot}
\end{figure}

\begin{example}
\label{ex:fibers}
	Consider the fiber 2--knots $(S^2\times S^2, \Ff)$ and $(S^2\wt\times S^2, \widetilde\Ff)$, which are each of self-intersection zero.  doubly pointed, genus two trisection diagrams for these 2--knots, together with genus two trisection diagrams for their exteriors, were given in the fourth and fifth rows of Figure~\ref{fig:1b_Comps}, respectively.  These 2--knots have diffeomorphic exteriors: $E_\Ff \cong E_{\widetilde\Ff}\cong S^2\times D^2$. It follows that they are Gluck-twins, as can be verified by following the bottom branch of Figure~\ref{fig:fibers}.  It also follows, as indicated by the top branch of the figure, that surgery on these 2--knots gives $S^4$.  The belt-circle to these sphere surgeries is the unique curve in $S^4$, giving another way to understand these 2--knots as Gluck-twins.
\end{example}

\begin{figure}[h!]
	\centering
	\includegraphics[width=.9\textwidth]{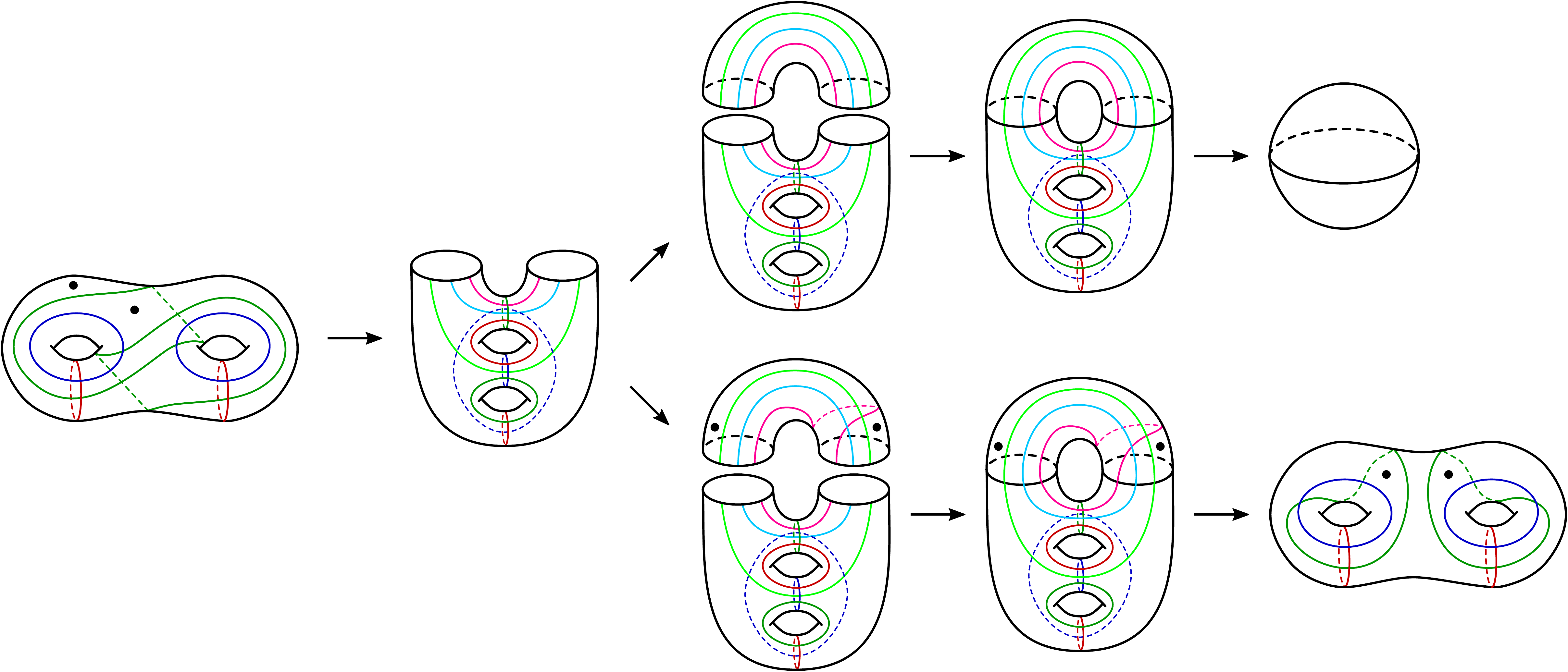}
	\caption{Surgery operations on the fiber 2--knot $(S^2\times S^2,\Ff)$.  On the left, we have a doubly pointed, genus two trisection diagram for this 2--knot, together with the genus two trisection of its exterior $E_\Ff \cong S^2\times D^2$.  The top branch shows the sphere surgery $(S^2\times S^2)(\Ff)\cong S^4$, while the bottom branch shows the Gluck surgery $(S^2\times S^2)_*(\Ff)$, the result of which is the other fiber 2--knot $(S^2\wt\times S^2,\widetilde\Ff)$.}
	\label{fig:fibers}
\end{figure}

\begin{example}
\label{ex:trefoil}
	In~\cite{Mei_Trisections-and-spun_17}, it was shown that the spin of a doubly pointed, genus $h$ Heegaard splitting of a knot $K$ in $S^3$ gives rise to a doubly pointed $(3h,h)$--trisection of the spun 2--knot $(S^3,\Ss(K))$. The simplest example of this construction is when $(S^3,K)$ admits a doubly pointed, genus one Heegaard diagram. Let $T_{p,q}$ denote the $(p,q)$--torus knot, which can be isotoped from its position as a slope on the genus one Heegaard surface to be in 1--bridge position.  See the left column of Figure~\ref{fig:trefoil} and Figure~11 of~\cite{Mei_Trisections-and-spun_17} for examples of doubly pointed Heegaard diagrams when $(p,q)$ is $(2,3)$ and $(3,4)$, respectively.  The right side of each of these figures (top-right in the former case) shows the doubly pointed, genus three trisection diagram for the spin of each torus knot, and the general picture should be clear from these examples.  The bottom-right graphic of Figure~\ref{fig:trefoil} shows the genus four trisection diagram for the result of Gluck surgery on the spun trefoil, which is known to be $S^4$~\cite{Glu_The-embedding-of-two-spheres_62}.
	
	It's not obvious whether or not this genus four trisection diagram for $S^4$ is stabilized; in general, it is unknown whether or not every trisection of $S^4$ with non-zero genus is stabilized.  See~\cite{MeiSchZup_Classification-of-trisections_16} for a complete discussion of the so-called Four-Dimensional Waldhausen Conjecture.  Examples of trisections of $S^4$ that are potential counterexamples to this conjecture described in~\cite{MeiSchZup_Classification-of-trisections_16} and given in~\cite{MeiZup_Characterizing-Dehn_17}.  By considering trisections corresponding to Gluck surgery on spun (or twist-spun) knots, we access a new class of such potential counterexamples; we pose the following question.
	\begin{question}
		Is the trisection diagram constructed by combining the methods of this paper and those of~\cite{Mei_Trisections-and-spun_17} for Gluck surgery on the spin or twist-spin of a non-trivial knot in $S^3$ \emph{\textbf{ever}} stabilized? 
	\end{question}

\end{example}

\begin{figure}[h!]
	\centering
	\includegraphics[width=.9\textwidth]{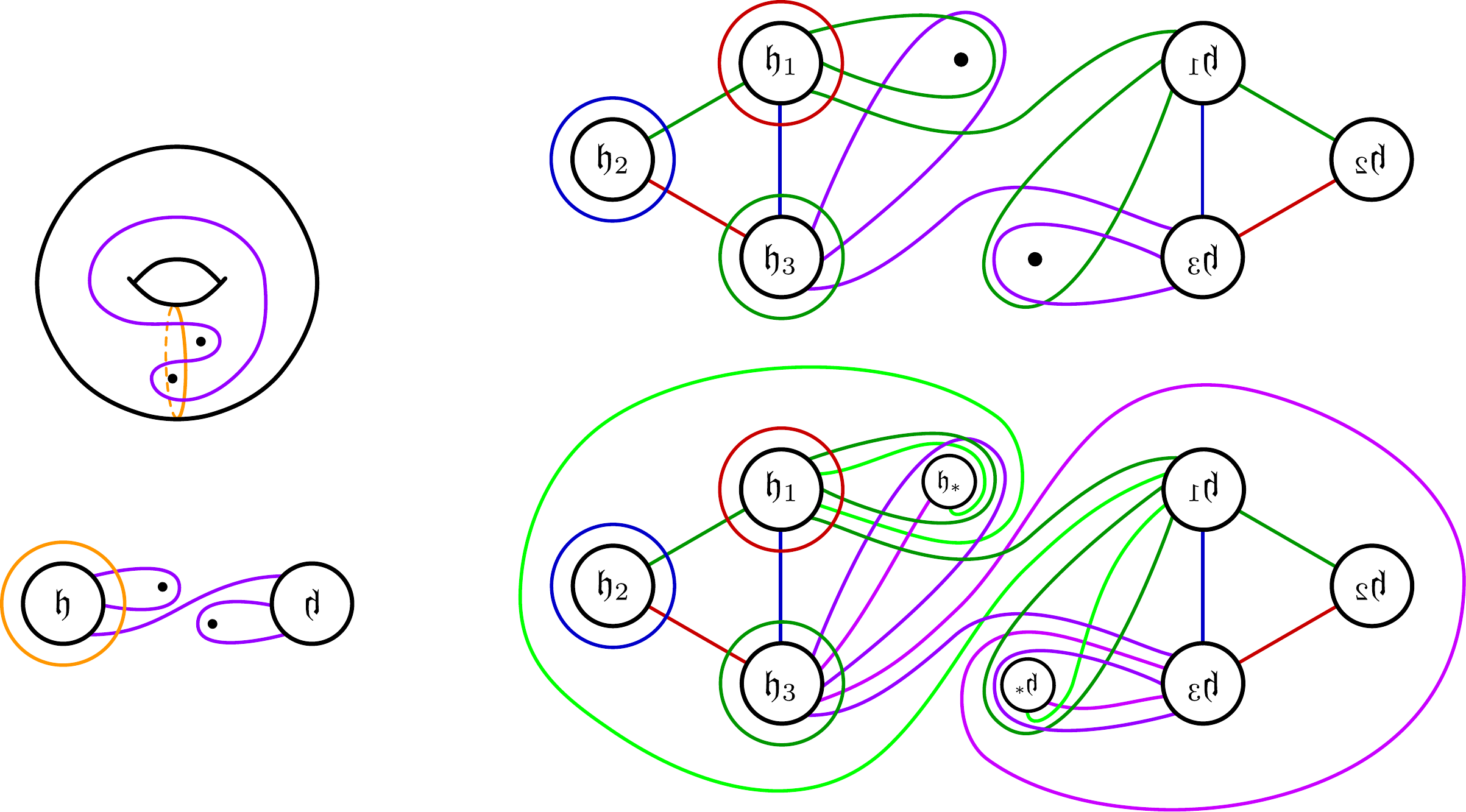}
	\caption{(Left column) Two depictions of the doubly pointed, genus one Heegaard splitting of the right-handed trefoil $(S^3,T_{3,2})$.  (Top-right) A doubly pointed, genus three trisection diagram for the the spun trefoil $(S^4, \Ss(T_{3,2}))$  (Bottom-right) A genus four trisection diagram for the result of Gluck surgery on this 2--knot, which is known to recover $(S^4,\Ss(T_{2,3}))$.  Is this diagram stabilized?}
	\label{fig:trefoil}
\end{figure}

\begin{example}
\label{ex:spin}
	Consider the lens space $L(p,q)$, and let $\Ss_p = \text{Spin}(L(p,q))$ be the spin of this 3--manifold.  We refer the reader to~\cite{Mei_Trisections-and-spun_17} for complete details and a related discussion of these examples.  In particular, it is shown there that $\Ss_p$ admits a genus three trisection.  The top left graphic in Figure~\ref{fig:spin} shows the diagram for $\Ss_5 = \text{Spin}(L(5,2))$ (disregarding the double points for now).  Note that the diffeomorphism type of $\Ss_p$ does not depend on $q$, but the trisection diagram may \emph{a priori}.
	
	Inside $\Ss_p$ are two interesting 2--knots.  The first 2--knot is the core of the spinning construction.  By definition, 
	$$\Ss_p = (L(p,q)^\circ\times S^1)\cup S^2\times D^2,$$
	where $L(p,q)^\circ$ is the punctured lens space. Since we are working with lens spaces, the result is independent of the choice of gluing along the boundary copies of $S^2\times S^1$; the spin and the \emph{twisted spin} are diffeomorphic, in this case~\cite{Plo_Equivariant-intersection_86}.  Let $\Kk_{p,q} = S^2\times\{pt\}$ be the core of the $S^2\times D^2$ filling in the construction.  The second 2--knot is the belt-sphere of a circle surgery on a circle in $S^3\times S^1$. Let $\omega_p$ be the circle such that $[\omega_p] = p\in\Z \cong \pi_1(S^3\times S^1)$.  We claim (and shall soon see) that the circle surgery on $\omega_p$ in $S^3\times S^1$ produces $\Ss_p$, a fact originally due to Pao~\cite{Pao_The-topological-structure_77}.  Let $\Jj_p$ denote the belt-sphere of this circle surgery.
	
	The top left graphic of Figure~\ref{fig:spin} shows a doubly pointed trisection diagram for $(\Ss_p,\Jj_p)$, where the double points have been connected with colored arcs for clarity. The rest of the top row shows the procedure for obtaining a trisection diagram for the result $\Ss_p(\Jj_p)$ of sphere surgery on this 2--knot, which is $S^3\times S^1$, as one can verify by destabilizing the genus four diagram (the top-right graphic) three times. (Note that this proves that $\Ss_p$ is the result of circle surgery on $\omega_p$, as claimed, and that $\Jj_p$ is the belt-sphere of this surgery.)  The bottom row of Figure~\ref{fig:spin} shows the diagrams corresponding to Gluck surgery on $(\Ss_p,\Jj_p)$.  If $p$ is odd, the ambient 4--manifold is still $\Ss_p$ after Gluck surgery.  However, if $p$ is even, a new 4--manifold $\Ss_p'$ is produced that is not homotopy-equivalent to $\Ss_p$~\cite{Pao_The-topological-structure_77}.  
	
	The left graphic of Figure~\ref{fig:spun_bundle} shows a doubly pointed trisection diagram for $(\Ss_p, \Kk_{p,q})$ (again, with clarifying colored arcs added).  The underlying trisection diagram for $\Ss_p$ is obtained from the corresponding diagram discussed in Figure~\ref{fig:spin} by performing one handleslide of each color.  The rest of the figure gives a diagrammatic rendering of sphere surgery on this 2--knot.  By the definition of the spinning construction, $\Ss_p(\Kk_{p,q})\cong L(p,q)\times S^1$.  (Note that the relevance of $q$ persists here.)  Gluck surgery (which we have not depicted) on $(\Ss_p, \Kk_{p,q})$ yields the twisted spin of $L(p,q)$, which, as we remarked above, turns out to be diffeomorphic to the spin~\cite{Plo_Equivariant-intersection_86}. In fact, $\Kk_{p,q}$ is equivalent to its twin, since it has $L(p,q)^\circ$ as a Seifert hyper-surface, and $L(p,q)$ is \emph{untangled}, in the sense of~\cite{Glu_Tangled-manifolds_62}, a sufficient condition for Gluck surgery to preserve the 2--knot.  (See also Section~22 of~\cite{Glu_The-embedding-of-two-spheres_62}.)	
	
\end{example}

\begin{figure}[h!]
	\centering
	\includegraphics[width=.9\textwidth]{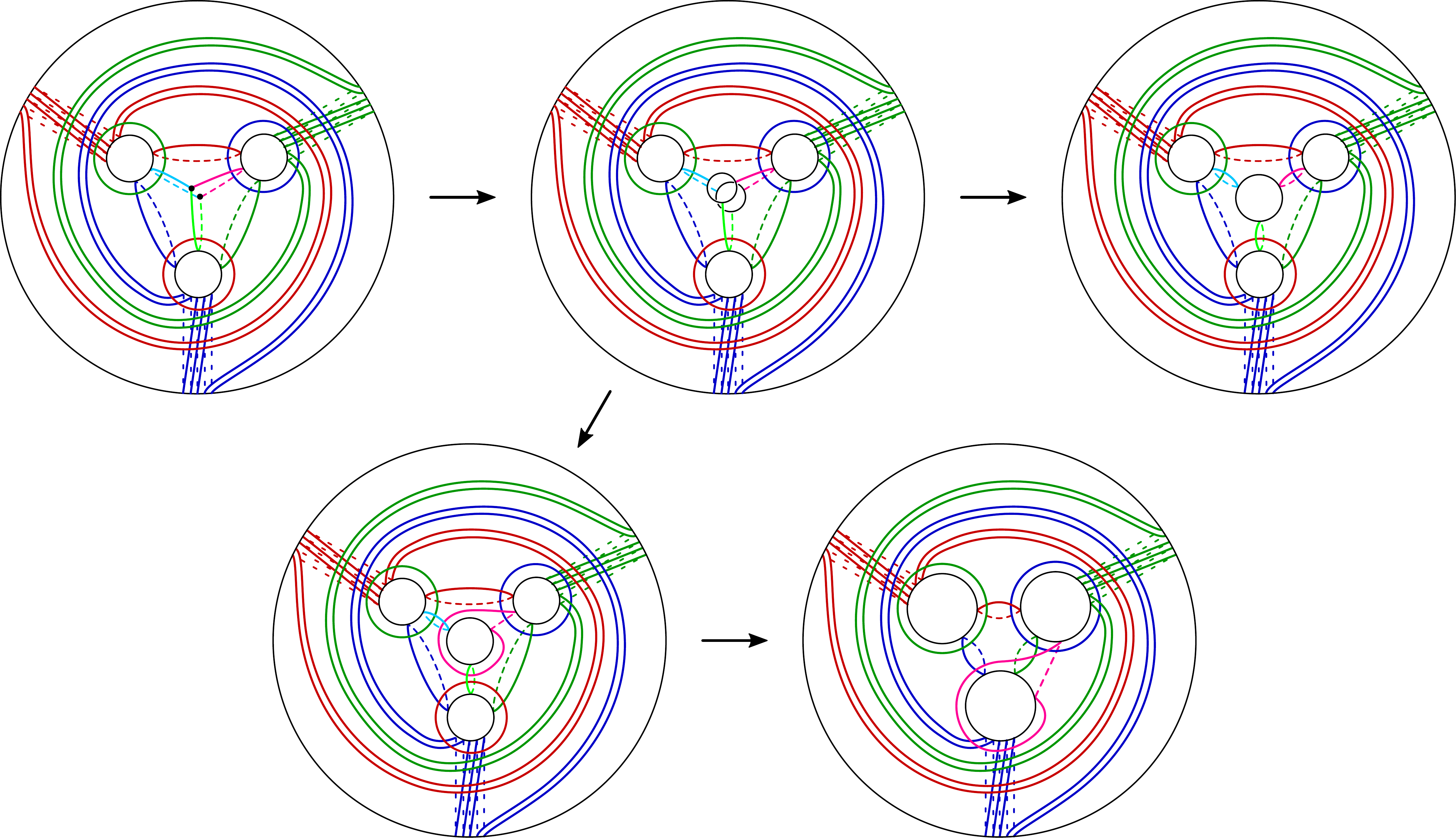}
	\caption{The 2--knot $(\Ss_p,\Jj_p)$, where $\Ss_p$ is the spin of $L(p,q)$ and $\Jj_p$ is the belt-sphere of a circle surgery from $S^3\times S^1$ to $\Ss_p$, together with diagrams depicting the processes of sphere surgery (top row) and Gluck surgery (bottom-row) on this 2--knot.}
	\label{fig:spin}
\end{figure}

\begin{figure}[h!]
	\centering
	\includegraphics[width=.8\textwidth]{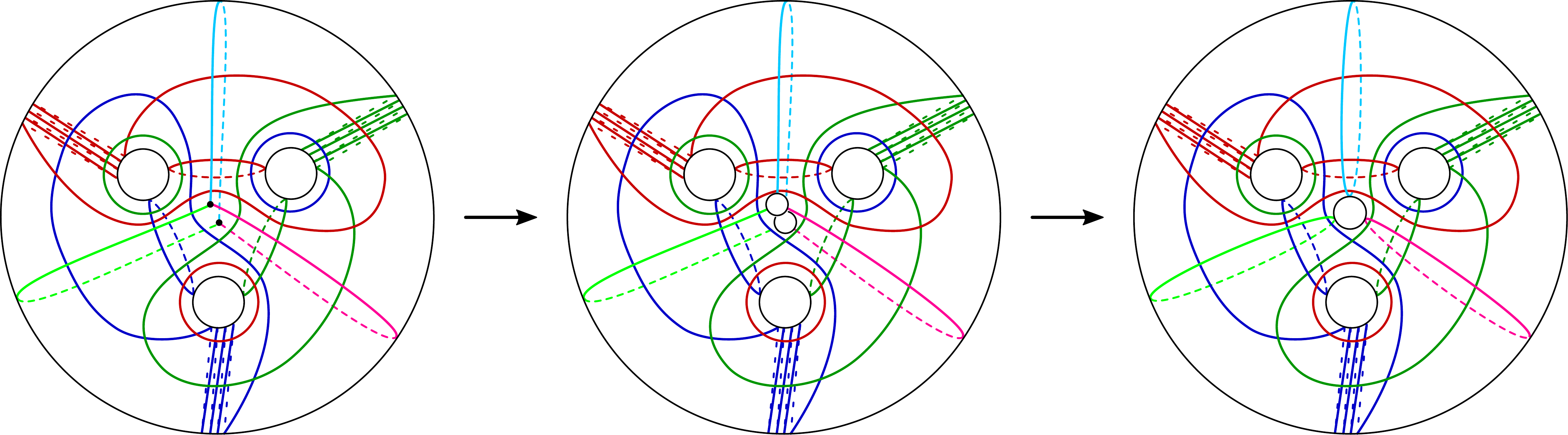}
	\caption{The 2--knot $(\Ss_p, \Kk_{p,q})$, where $\Ss_p$ is the spin of $L(p,q)$, and $\Kk_{p,q}$ is the core of the $S^2\times D^2$ used in the construction of the spin, together with diagrams depicting the process of sphere surgery on this 2--knot.}
	\label{fig:spun_bundle}
\end{figure}

\begin{example}
\label{ex:conic}
	As discussed above, a $(\mp 4)$--rational blowdown is obtained by removing a neighborhood of a 2--knot with self-intersection $\mp 4$ and replacing it with the exterior $B_{\pm4}$ of the degree-two curve $\Cc$ (the \emph{conic}) in $\CP^2$ or its mirror in $\overline\CP^2$, which is a $\Z_2$--homology 4--ball.  Therefore, the simplest example of a $(\pm 4)$--rational blowdown is to perform this operation on the conic itself. In this case, we have
	$$\CP^2_{+4}(\Cc) = B_{+4}\cup_{L(4,1)}B_{-4},$$
	is simply the doubly of $B_{+4}$, hence a $\Z_2$--homology 4--sphere.  Figure~\ref{fig:conic} shows the process of understanding the result of this particular $(+4)$--rational blowdown diagrammatically.  The resulting 4--manifold turns out to be one that shows up in a number of guises, so we conclude with proposition unifying these viewpoints, the proof of which is left to the reader.

\begin{proposition}
\label{prop:Z2}
	The following are all descriptions of the same smooth 4--manifold.
	\begin{enumerate}
		\item The spin $\Ss(\RP^3)$ of the lens space $\RP^3 = L(2,1)$.
		\item One result of circle surgery on the curve in $S^3\times S^1$ representing twice the generator in homology.
		\item The double of an orientable disk-bundle over $\RP^2$ with even Euler number.
		\item The double of the exterior of the conic in $\CP^2$.
		\item The orientable sphere-bundle over $\RP^2$ with even Euler number.
		\item The result of a $(+4)$--rational blowdown of the conic in $\CP^2$.
		\item The two-fold cover of $S^4$, branched along the spin of the Hopf link, whose components are an unknotted sphere and an unknotted torus.
		\item The quotient of $S^2\times S^2$ by the isometry that acts on the first factor by reflection through an equatorial plane and on the second factor by the antipodal map.
		\item Gluck surgery on the fiber of the orientable sphere-bundle over $\RP^2$ with odd Euler number.
	\end{enumerate}
\end{proposition}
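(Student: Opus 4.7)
The plan is to establish a chain of diffeomorphisms connecting all nine descriptions by fixing $W = \Ss(\RP^3)$ from (1) and showing each of the remaining descriptions is diffeomorphic to $W$. I would organize the identifications into three clusters: the spin/surgery cluster (1), (2), (9); the bundle/blowdown cluster (3), (4), (5), (6); and the two branched/quotient constructions (7), (8), which I would link into the common manifold at the end.

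The equivalences within and between the first two clusters follow largely from results already in this paper. For (1)$=$(2), I would apply Example~\ref{ex:spin} with $p=2$: the chain of destabilizations in Figure~\ref{fig:spin} shows that $\Ss_2(\Jj_2) \cong S^3 \times S^1$, so reversing the sphere surgery exhibits $\Ss_2$ as the result of circle surgery on $\omega_2 \subset S^3 \times S^1$. For (6)$=$(4), the definition of the rational blowdown in Subsection~\ref{subsec:surgery} gives $\CP^2_{+4}(\Cc) = E_\Cc \cup_{L(4,1)} B_{-4}$, and the third row of Figure~\ref{fig:1b_Comps} identifies $E_\Cc \cong B_{+4} = \overline{B_{-4}}$, so this is exactly the double of $B_{+4}$. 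For (3)$=$(5), doubling any $D^2$-bundle yields the associated $S^2$-bundle with the same Euler class. For (4)$=$(3), one cites the standard description of the Fintushel--Stern rational ball $B_{+4}$ as the orientable $D^2$-bundle over $\RP^2$ with (twisted) Euler number $\pm 2$.

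To connect the spin picture (1) with the bundle picture (5), I would decompose $S^1 = D^1_+ \cup D^1_-$ so that $(\RP^3 \setminus B^3) \times S^1$ splits as a double of $Y = (\RP^3 \setminus B^3) \times D^1_+$; since $\RP^3 \setminus B^3$ is the twisted $I$-bundle over $\RP^2$, $Y$ is a $D^2$-bundle over $\RP^2$, and attaching $S^2 \times D^2$ symmetrically across the seam extends each disk fiber into a $2$-sphere, exhibiting $W$ as the sphere bundle in (5). For (8)$=$(5), project $(S^2 \times S^2)/\sigma$ onto the second factor modulo the antipodal action; the reflection built into $\sigma$ on the first coordinate is precisely what ensures the resulting $S^2$-bundle over $\RP^2$ is the orientable, even Euler number one. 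For (7)$=$(1), observe that the two-fold cover of $S^3$ branched over the Hopf link is $\RP^3$ and that, in this rotationally symmetric situation, spinning commutes with passing to branched covers, yielding $\Ss(\RP^3)$. For (9)$=$(5), Gluck surgery on the fiber of an $S^2$-bundle over a surface reverses the parity of the Euler number, interchanging the two orientable $S^2$-bundles over $\RP^2$.

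The main obstacle I anticipate is the bookkeeping of orientations and Euler numbers. Disk and sphere bundles over the non-orientable base $\RP^2$ come in several orientation-twisted flavors, and the phrase ``even Euler number'' must refer to the same $4$-manifold in each of (3), (5), (8), and (9). To manage this I would fix at the outset a single invariant that pins down $W$ among the possible candidates, such as the Euler characteristic together with the signature, and verify that each identification in the chain preserves it. A secondary subtlety lies in the commuting-of-spin-and-branched-cover claim used for (7), since in general these operations do not commute; the argument depends on the fact that the Hopf link admits an axis of rotational symmetry compatible with the spinning axis of $S^4$.
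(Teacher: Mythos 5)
The paper explicitly states that ``the proof \dots is left to the reader,'' so there is no proof in the paper to compare against; I am evaluating your proposal on its own terms. The clustering strategy is sensible, and several of the links -- (1)$=$(2) via Example~\ref{ex:spin} with $p=2$, (6)$=$(4) from the definitions and the identification $E_\Cc\cong B_{+4}$, (3)$=$(5) by doubling, (4)$=$(3) from the standard description of $B_{+4}$ as a twisted $D^2$--bundle over $\RP^2$ -- are correct.

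The substantive gap is exactly where you flag the main obstacle, but the fix you propose does not work. There are precisely two orientable $S^2$--bundles over $\RP^2$, distinguished by $w_2$ of the vertical tangent bundle (equivalently, by whether the total space is spin, or by whether the pullback to the orientation double cover $S^2\to\RP^2$ is $S^2\times S^2$ or $S^2\wt\times S^2$). \emph{Both} have $\chi=2$ and $\sigma=0$ (each has $\pi_1=\Z_2$, $b_1=b_2=b_3=0$), so the pair $(\chi,\sigma)$ cannot distinguish them, and this is precisely the ambiguity you need to resolve in steps (1)$=$(5), (8)$=$(5), and (9)$\leftrightarrow$(5). A working substitute is the diffeomorphism type of the double cover: the $\pi_1=\Z_2$ double cover of $\Ss(\RP^3)$ is $\bigl((S^2\times I)\times S^1\bigr)\cup 2(S^2\times D^2)\cong S^2\times S^2$; $(S^2\times S^2)/\sigma$ has double cover $S^2\times S^2$ tautologically; and the double cover of $B_{+4}$ is the Euler number $4$ disk bundle over $S^2$, whose double is again $S^2\times S^2$. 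Once this is fixed, the geometric ``split $S^1$ and cap off fibers'' argument for (1)$=$(5), which is currently too vague to pin down the Euler number on its own, can be shortened or dispensed with. A secondary point: the identification (7)$=$(1) relies on an unjustified ``spinning commutes with branched covers'' principle; while true in this symmetric situation, it needs a short argument (choose the removed ball to meet the branch locus in a trivial tangle and check the $\Z_2$--actions on $(\RP^3\setminus B^3)\times S^1$ and on $S^2\times D^2$ agree on the overlap), and the unknotted sphere component of the spun Hopf link should be identified explicitly with the surgery sphere $S^2\times\{0\}$ from the spinning construction.
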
	

\end{example}

\begin{figure}[h!]
	\centering
	\includegraphics[width=.9\textwidth]{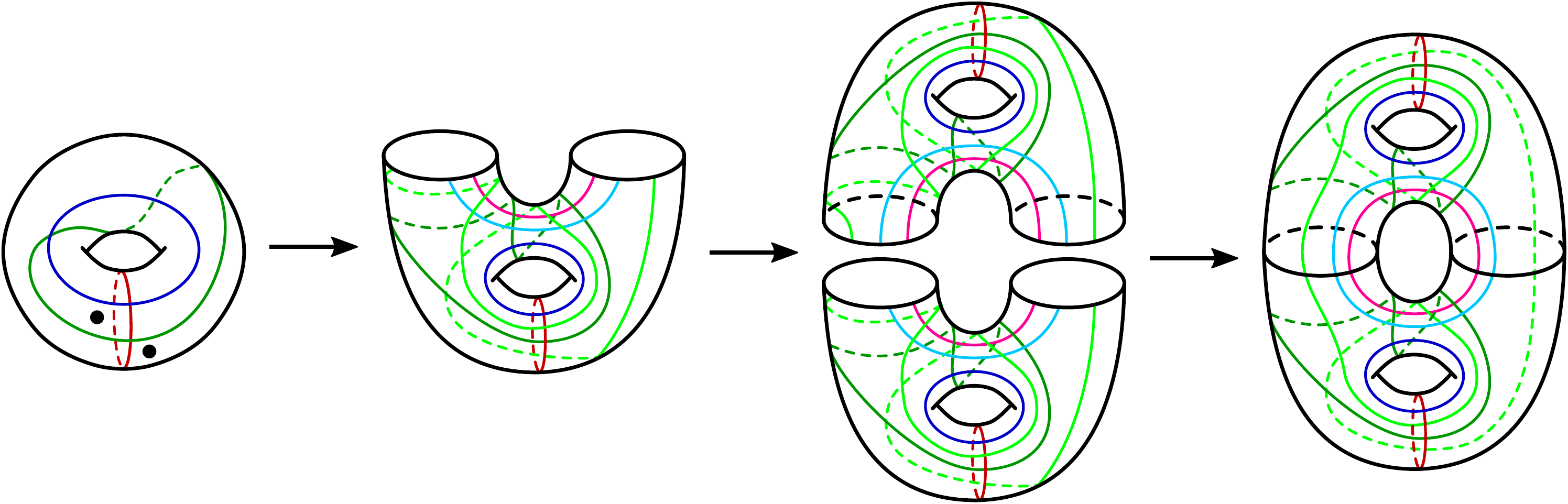}
	\caption{The process of performing a $(+4)$--rational blowdown on the conic $(\CP^2,\Cc)$, stating with a doubly pointed trisection diagram for the 2--knot, passing through a relative trisection diagram for the 2--knot exterior, and terminating with the $\Z_2$--homology 4--sphere $\CP^2_{+4}(\Cc)$.}
	\label{fig:conic}
\end{figure}

\begin{example}
\label{ex:splice}
	The techniques of this paper have another application, which to this point has gone unstated.  Let $(X_1,\Kk_1)$ and $(X_2,\Kk_2)$ be 2--knots, each with self-intersection $p$, and let $E_1$ and $E_2$ denote the corresponding exteriors.  Let $X = E_1\cup_\varepsilon \overline E_2$, where $\varepsilon\colon \partial \overline E_2 \to E_1$ is an orientation reversing diffeomorphism.  This manifold is the \emph{normal connected sum} of $X_1$ and $\overline X_2$ along $\Kk_1$ and $\overline\Kk_2$.  (See Chapter~10 of~\cite{GomSti_4-manifolds-and-Kirby_99} for a discussion of this operation in the symplectic category.)  If $|p|>1$, then the mapping class group of $L(p,1)$ is $\Z_2 = \{\id,\tau\}$, where $\tau$ is the deck transformation corresponding to viewing $L(p,1)$ as the double cover of $S^3$, branched along the (left-handed) torus knot $T_{2,-p}$~\cite{Bon_Diffeotopies-des-espaces_83} .  If $p=1$, there is a unique gluing, since the boundaries are $S^3$. If $p=0$, the boundary is $S^2\times S^1$. Gluck showed that the mapping class group of $S^2\times S^1$ is $\Z_2\oplus\Z_2\oplus\Z_2$; thus, there are \emph{a priori} eight gluings in this case.  If one of the $E_i$ is $B^3\times S^1$, then all eight maps extend, so the gluing is unique.  If one of the $E_i$ is $S^2\times D^2$, then all but one map extend, so our choices are the familiar maps $\id$ and $\tau$ invoked repeatedly above.  However, in general, one expects none of the eight to extend over either of the $E_i$, so all of them might be considered.  Note that four of these mapping classes are orientation reversing. (See Theorem~5.1 of~\cite{Glu_The-embedding-of-two-spheres_62}.)
	
	If $\DD_1$ and $\DD_2$ are doubly pointed trisection diagrams for $(X_i,\Kk_i)$ and $\DD_1^\circ$ and $\DD_2^\circ$ are the corresponding $p$--annular, relative trisection diagrams for the exteriors, the the union $\DD = \DD_1^\circ\cup\overline\DD_2^\circ$ is a trisection diagram for some normal connected sum of $X_i$ and $\overline X_i$.	
\end{example}

\newpage
\bibliographystyle{amsalpha}
\bibliography{Gluck_Bib.bib}

\end{document}